\numberwithin{equation}{section}
\newtheorem{thm}{Theorem}[section]
\newtheorem{lemma}[thm]{Lemma}
\newtheorem{proposition}[thm]{Proposition}
\newtheorem{corollary}[thm]{Corollary}
\newtheorem{remark}[thm]{Remark}
\newcommand{\eps}{\varepsilon}
\DeclareMathAlphabet{\mathpzc}{OT1}{pzc}{m}{it}
\newcommand{\ka}{{\mathpzc k}}
\newcommand{\kad}{{\mathpzc K}}
\newcommand{\mquad}[1]{\qquad \text{#1} \qquad}
\newcommand{\seq}[1]{\left ( #1\right )_{i\in \kad}}
\newcommand{\e}{\mathrm e}
\newcommand{\ud }{\, \mathrm d}
\newcommand{\mud}{\mathrm d}
\newcommand{\sem}[1]{\left ( \e^{t#1} \right)_{t \ge 0}}
\newcommand{\dom}[1]{\mathcal D(#1)}
\newcommand{\grae}{\lim_{\eps \to 0}}
\newcommand{\lam}{\lambda}
\newcommand{\rla}{R_\lam} 
\newcommand{\rlae}{R_{\lam,\eps}}
\newcommand{\rlaz}{R_{\lam,0}}
\newcommand{\gra}{\lim_{n\to \infty}}
\newcommand{\rez}[1]{\left (\lam - #1\right )^{-1}}
\newcommand{\mc}{\mathcal}
\newcommand{\sui}{\sum_{i\in \kad}}
\newcommand{\R}{\mathbb R}
\newcommand{\E}{\mathrm{E}}
\newcommand{\slam}{\sqrt{2\lam}}
\definecolor{ngreen}{HTML}{006400}
\title[]{On Portenko's approximation of  skew \\ Brownian motion}
\author{Adam Bobrowski} 
\address{Lublin University of Technology\\Department of Mathematics\\Nadbystrzycka 38A\\20-618 Lublin, Poland}
\author{Andrey Pilipenko}
 \address{University of Geneva, 
Section de math\'ematiques, 
UNI DUFOUR\\
24, rue du G\`en\`eral Dufour\\
Case postale 64, 1211 Geneva 4, Switzerland\\and \\
Institute of Mathematics of the Ukrainian National Academy of Sciences\\
 Tereschenkivska st. 3, Kiev-4, 01601, Ukraine}
\begin{document}

\begin{abstract} From the perspective of the theory of operator semigroups, we reflect back on the classical theorem of Portenko devoted to approximation of skew Brownian motion.  
The theorem  says that by concentrating the power of drift of a diffusion process around a point one obtains an equivalent of a semi-permeable membrane at this point, described by skew Brownian motion's boundary condition. We prove convergence of the corresponding Feller semigroups and in doing so, generalize Portenko's theorem to the case of  the Walsh processes on star graphs. Our analysis leads through singular perturbations of Sturm--Liouville equations, and 
reveals that as a result of Portenko-type approximation parameters of Walsh processes are transformed in a simple and  elegant manner.
 \end{abstract}

\subjclass{47D06, 47D07, 60J60, 60J50, 60J35}

\keywords{diffusions on  graphs, Walsh's process, singular perturbations, convergence of Feller processes and semigroups}

\maketitle
\vspace{-1.5cm}
\section{Introduction}
Skew Brownian motion has been introduced in It\^o and McKean's monograph \cite{ito} (see Problem 1, p. 115 there) as a modification of the standard Brownian motion in which the sign of each excursion from $0$ is chosen (independently from other excursions) to be positive with probability $p\in (0,1)$, and negative with probability $1-p$; this characteristic property of skew Brownian motion is expressed in the boundary condition (see further on for details)
\[ pf'(0+) = (1-p) f'(0-).\]
Later the process, together with its generalizations, including the Walsh's spider process, gained its due recognition with the seminal paper of J.B. Walsh \cite{walsh}. It is still a plausible object of study, perhaps because of surprising symmetries and regularities in its structure \cites{abtk,aberman,fromsotows,abernachr,kinetic,barlow,adasma,bookandrey,kostrykin2012,kostrykin2,lejayskew,manyor, yor97}.

In the somewhat forgotten work  \cite{portenko} of the second part of 1970s, described also in \cite{portenkokniga}*{Thm. 3.4., p. 146} and preceding  the paper of J.B. Walsh,  N.I.~Portenko also, independently,  defined what turns out to be the skew Brownion motion, as a particular case of his generalized diffusions, and found the following approximation.

Let   $a_\eps, \eps >0$ be  nonnegative, continuous and integrable functions on $\R$ with the following property: we have
$\sup_{\eps>0}\int_\R a_\eps (z) \ud z <\infty$, and 
 there is a constant $\alpha\ge 0$ such that 
\begin{equation}\label{intro:1} \grae \int_x^y a_\eps (z) \ud z  = \alpha [xy<0], \qquad \, x,y\in \R\setminus \{0\}, x<y, \end{equation} 
 where $[\cdot]$ is the Iverson bracket. Such a family of functions can be constructed, for example, as follows: given a nonnegative, continuous and integrable function $a$ on $\R$ we define \begin{equation}\label{zero}a_\eps (x) = \eps^{-1} a(\eps^{-1} x), \qquad x \in \R;\end{equation} then \eqref{intro:1} holds with $\alpha \coloneqq \int_{\R} a(x)\ud x$. Portenko has proved that the Feller semigroups generated by 
 \[ \mathfrak A_\eps f = \tfrac 12 f'' + a_\eps f' \]
 where $a_\eps $s  have property \eqref{intro:1}, converge, as $\eps \to 0$, to the skew Brownian motion with parameter 
 \begin{equation} \widetilde p=\tfrac {\e^{\alpha}}{\e^{\alpha} + \e^{-\alpha}}.\label{intro:2} \end{equation}  
 Since $\widetilde p\ge \frac 12$, this  skew Brownian motion can be interpreted as having an infinitesimal drift to the right at $x=0$, that is, a tendency to  assign positive rather than negative signs of excursions from $0$.

It is the goal of this note to give a semigroup-theoretic proof of the Portenko theorem and in doing so to generalize it. 
To this end, in Section \ref{sec:tmt} we consider diffusion processes on a finite interval with fixed diffusion coefficient, reflecting barriers at either end, a semi-permeable membrane in the interval center, and a drift that concentrates more and more around this center; we show that the limit process is a Brownian motion with transformed permeability coefficient of the membrane (see Theorem \ref{thm:glo}). The proof, contained in Section \ref{sec:pot}, leads through the analysis of the resolvents of the  generators of the approximating processes  and has the advantage of showing that the Portenko theorem involves an elegant singular perturbation of the related Sturm--Liouville equations.

In Section \ref{sec:gene} the result is generalized further (see Theorem \ref{thm:glo2}) to the case where the underlying processes have values in a finite star graph. Interestingly, the proof of Theorem \ref{thm:glo2} is not a straightforward adaptation of the proof of Theorem \ref{thm:glo} to the more complex situation. Rather, it involves deviceing a different, more handy, formula for the resolvents of approximating processes and that of the limit process (see formulae \eqref{pta:7}--\eqref{pta:8}, \eqref{pta:11} and Remark \ref{rem:2}).

 In Section \ref{sec:infinite} we show that the results obtained on finite graphs can be extended to the case of infinite graphs as well. Here, we provide two proofs. In the first (see Section \ref{sec:coxe}), we treat 
Theorem \ref{thm:glo2} as a key lemma, and examine the distributions of the processes involved.  In the second (see Section \ref{sec:corbis}), in keeping with the spirit of the paper, we come back to the analysis of resolvents.

Before completing this introduction, we mention the paper by Le Gall \cite{legall}, which in a sense provides a converse to Portenko's approximation. To elaborate on this succinct statement: Portenko's theorem reveals that by concentrating the power of drift around a point one obtains an equivalent of a semi-permeable membrane at this point, described by skew Brownian motion's boundary condition. Le Gall goes in the other direction: he considers processes that need to pass through a sequence of semi-permeable membranes, and shows that (under natural conditions) in the limit, as the membranes get closer to each other and their number grows appropriately, the processes converge to a one with `regular' drift. See also \cites{makno,andreynew}. 
 
\newcommand{\askew}{\mathfrak A_{\textnormal {skew}}} 
\newcommand{\awalsh}{\mathfrak A_{\textnormal {Walsh}}} 
\section{Portenko-type approximation of skew Brownian motion}\label{sec:tmt}

 Let $C[-r,r]$ be the space of continuous functions on the closed interval $[-r,r]$, where $r>0$ is fixed throughout Sections \ref{sec:tmt}--\ref{sec:gene}. Let also $p\in (0,1)$ be given, and let $\mc D_{p,\eps}\subset C[-r,r], \eps >0$ be the set of $f$ such that 
\begin{itemize} 
\item [(a) ] $f$ is twice continuously differentiable at either of the subintervals $[-r,0]$ and $[0,r]$ separately  (with one-sided derivatives at each intervals' ends),
\item [(b) ] $f'(-r)=f'(r)=0$ and the one-sided derivatives at $0$ are related by $pf'(0+)= (1-p)f'(0-)$,
\item [(c) ] we have $\frac 12 f''(0+) + a_\eps (0) f'(0+) = \frac 12 f''(0-) + a_\eps (0) f'(0-)$, 
\end{itemize}
where $a_\eps, \eps >0$ is a family of functions described in the introduction. For $f\in \mc D_{p,\eps}$ it makes sense to define 
\[ \mathfrak A_\eps f = \tfrac 12 f'' + a_\eps f' \]
and it turns out that the so-defined $\mathfrak A_\eps$s are Feller generators   $C[-r,r]$ (see Section \ref{sec:cor} for details). 

Our first main theorem speaks of the convergence of the semigroups generated by $\mathfrak A_\eps$s. 

\begin{thm}\label{thm:glo} Let $\sem{\mathfrak A_\eps}$ be the semigroup generated by $\mathfrak A_\eps$. Then  
\[ \grae \e^{t\mathfrak A_\eps} = \e^{t\askew}, \]
strongly and uniformly with respect to $t$ in compact subsets of $[0,\infty)$, where $\sem{\askew}$ is the semigroup related to the skew Brownian motion with parameter 
\begin{equation}\label{mthm:1} \widetilde p \coloneqq \tfrac p{p+(1-p)\e^{-2\alpha}} \end{equation} 
(which is no smaller than $p$) and reflecting barriers at $-r$ and $r$. \end{thm}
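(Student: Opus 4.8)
The plan is to deduce the convergence of the semigroups from the convergence of the resolvents. Since, as noted before the statement of the theorem, the operators $\mathfrak A_\eps$ and $\askew$ generate Feller, hence contraction, semigroups on $C[-r,r]$, the Trotter--Kato theorem reduces the assertion to the following: for one (equivalently every) $\lam>0$ and every $g\in C[-r,r]$ the functions $f_\eps\coloneqq \rlae g=\rez{\mathfrak A_\eps}g$ converge in $C[-r,r]$ to $f_0\coloneqq\rla g=(\lam-\askew)^{-1}g$; the resulting semigroup convergence is then automatically uniform for $t$ in compact subsets of $[0,\infty)$. The whole argument therefore takes place at the level of the resolvent equation $\lam f_\eps-\tfrac12 f_\eps''-a_\eps f_\eps'=g$, to be solved under conditions (a)--(c); note that (c) merely expresses the continuity of $\mathfrak A_\eps f_\eps=\lam f_\eps-g$ at $0$ and so holds automatically for any solution.

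First I would record the contraction estimate $\|f_\eps\|_\infty\le\lam^{-1}\|g\|_\infty$. Away from the membrane the drift is asymptotically negligible, since $\int_\delta^r a_\eps\to0$ and $\int_{-r}^{-\delta}a_\eps\to0$ for every $\delta>0$; combining this with the reflecting conditions $f_\eps'(\pm r)=0$, a Gr\"onwall-type estimate gives $\eps$-uniform bounds on $f_\eps'$ and $f_\eps''$ on $[\delta,r]$ and $[-r,-\delta]$. Introducing $A_\eps(x)\coloneqq\int_0^x a_\eps$ and $v_\eps\coloneqq\e^{2A_\eps}f_\eps'$, the equation reads $\tfrac12 v_\eps'=\e^{2A_\eps}(\lam f_\eps-g)$, so $v_\eps$ is Lipschitz on each side of $0$ with an $\eps$-independent constant (using $\sup_\eps\int_\R a_\eps<\infty$ to bound $\e^{2A_\eps}$); since $v_\eps(\pm\delta)$ is already bounded, $v_\eps$ and hence $f_\eps'=\e^{-2A_\eps}v_\eps$ are uniformly bounded on all of $[-r,r]$, making the family $f_\eps$ equicontinuous. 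By Arzel\`a--Ascoli I extract a subsequence along which $f_\eps\to f_*$ uniformly on $[-r,r]$ and in $C^1$ on compact subsets of $[-r,0)\cup(0,r]$; the limit $f_*$ is continuous on $[-r,r]$, solves $\lam f_*-\tfrac12 f_*''=g$ on each of $[-r,0]$ and $[0,r]$, and satisfies $f_*'(-r)=f_*'(r)=0$.

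The heart of the matter is the transmission condition at $0$, and this is where the singular perturbation is felt. Because $A_\eps$ is continuous with $A_\eps(0)=0$, condition (b) transfers to the jump relation $v_\eps(0+)=\tfrac{1-p}{p}\,v_\eps(0-)$. Evaluating $v_\eps$ at $\pm\delta$, using its $\eps$-uniform Lipschitz bound to replace $v_\eps(0\pm)$ by $v_\eps(\pm\delta)$ up to an error of order $\delta$, and then dividing through by $\e^{2A_\eps(-\delta)}$ (which stays bounded), the individual normalisations cancel and only the difference $A_\eps(\delta)-A_\eps(-\delta)=\int_{-\delta}^\delta a_\eps$ remains. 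By \eqref{intro:1} this difference tends to $\alpha$, so letting first $\eps\to0$ along the subsequence (with $f_\eps'(\pm\delta)\to f_*'(\pm\delta)$) and then $\delta\to0$ yields $p\,\e^{2\alpha}f_*'(0+)=(1-p)f_*'(0-)$, which is precisely $\widetilde p f_*'(0+)=(1-\widetilde p)f_*'(0-)$ for $\widetilde p$ as in \eqref{mthm:1}.

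Collecting these facts, $f_*$ satisfies all the conditions that characterise $\rla g$ --- the equation on each subinterval, continuity and the skew condition at $0$, and reflection at $\pm r$ --- whence $f_*=\rla g$ by uniqueness of the limit resolvent. As the limit does not depend on the subsequence, the full family $\rlae g$ converges to $\rla g$, and the Trotter--Kato step of the first paragraph completes the proof. I expect the transmission analysis to be the main obstacle: the concentrated drift creates a boundary layer near $0$ across which $f_\eps'$ changes sharply, so that the interior derivatives $f_\eps'(0\pm)$ controlled by (b) are not the outer derivatives $f_*'(0\pm)$, and it is the scale factor $\e^{2A_\eps}$ that records the discrepancy. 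The delicate points are the uniform control of the order-$\delta$ remainders and the use of \eqref{intro:1} through the difference $A_\eps(\delta)-A_\eps(-\delta)$ alone; reassuringly, the profile-dependent split of the mass to the two sides of $0$ never enters, in accordance with the fact that \eqref{intro:1} fixes only the total mass $\alpha$.
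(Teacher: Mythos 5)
Your proposal is correct, but it reaches the resolvent convergence by a genuinely different route from the paper's. Both arguments begin identically, invoking the Trotter--Kato--Neveu theorem to reduce the claim to $\grae\rez{\mathfrak A_\eps}g=\rez{\askew}g$ for $g\in C[-r,r]$. The paper then proceeds \emph{constructively}: it builds the Green's function of $\lam-\mathfrak A_\eps$ from a non-decreasing and a non-increasing solution $k_\eps,\ell_\eps$ of the associated Sturm--Liouville eigenvalue problem, obtains these as fixed points of contractions in a Bielecki-type norm with an $\eps$-independent contraction constant, and passes to the limit in the integral equations (the singular perturbation appearing as the change of the transmission condition from $k_\eps'(0+)=\gamma k_\eps'(0-)$ to $k'(0+)=\e^{-2\alpha}\gamma\, k'(0-)$); the resolvent convergence then follows by dominated convergence in the kernel. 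You instead work directly with the resolvent functions $f_\eps=\rez{\mathfrak A_\eps}g$, derive $\eps$-uniform a priori bounds through the flux variable $v_\eps=\e^{2A_\eps}f_\eps'$, extract a subsequential limit by Arzel\`a--Ascoli, and identify it by tracking the jump of $v_\eps$ across the membrane: your key computation, $\e^{2(A_\eps(\delta)-A_\eps(-\delta))}f_\eps'(\delta)=\gamma f_\eps'(-\delta)+O(\delta)$ uniformly in $\eps$, followed by $\eps\to0$ and then $\delta\to0$, is sound and correctly isolates the only quantity that \eqref{intro:1} controls, namely $\int_{-\delta}^{\delta}a_\eps\to\alpha$. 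The trade-off: your compactness argument is shorter and avoids fixed-point machinery, but it is non-constructive and must presuppose that $\mathfrak A_\eps$ and $\askew$ are generators (surjectivity of $\lam-\mathfrak A_\eps$, and uniqueness for the limit equation), facts which the paper obtains precisely as by-products of the Green's-function formula you bypass; that explicit kernel is also what the authors reuse for the star-graph generalization in Section \ref{sec:gene}. One small caveat: the $C^1$ convergence you invoke at $\pm\delta$ is best justified from the integrated form $f_\eps'(x)=-2\int_x^r\e^{2\int_x^y a_\eps(u)\ud u}\left(\lam f_\eps-g\right)(y)\ud y$ together with \eqref{intro:1} for $0<x<y$; this is exactly the pointwise (possibly failing at $0\pm$) convergence of derivatives recorded in the paper's Proposition \ref{prop:1}, and is all your argument needs.
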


The semigroup $\sem{\askew}$ featured in the theorem is generated by the operator $\askew$ defined as follows. Its domain is composed of $f\in C[-r,r]$ such that  
\begin{itemize} 
\item [(a) ] $f$ is twice continuously differentiable at either of the subintervals $[-r,0]$ and $[0,r]$ separately  (with one-sided derivatives at each intervals' ends),
\item [(b) ] $f'(-r)=f'(r)=0$ and the one-sided derivatives at $0$ are related by $\widetilde pf'(0+)= (1-\widetilde p)f'(0-)$,
\item [(c) ] we have $f''(0+) = f''(0-)$, 
\end{itemize}
and for such $f$ we agree that $\askew f = \frac 12 f''$. The fact that $\askew$ is a Feller generator can be seen as a by-product of the generation theorem for $\mathfrak A_\eps, \eps >0$ --- it suffices to consider $a_\eps =0$ and take $\widetilde p$ instead of $p$.  

In Remark \ref{rem:2} we argue that the Feller process generated by $\askew$ is a limit, as $\delta \to 0+$, of Brownian motions in  $[-r,r]$ that are reflected  at the interval's ends and upon any visit at $0$ jump to $\delta$ or $-\delta$ with probabilities $\widetilde p$ and $1-\widetilde p$, respectively. This supports our view of the process as a skew Brownian motion on $[-r,r]$, reflected at the interval's ends. As a further, and stronger, support of this fact, in Section \ref{rem:andrzeja} we construct the process by means of excursions of the reflected Brownian motion on $[0,r]$.

The Trotter--Kato--Neveu theorem \cites{kniga,knigazcup,ethier,goldstein,pazy,kallenbergnew} is our main tool in proving Theorem \ref{thm:glo}. To recall, this classical result of the theory of semigroups of operators asserts, in its simplest form, that the  convergence of contraction semigroups, such as Feller semigroups in particular,  is equivalent to the convergence of their Laplace transforms, that is, to the convergence of the resolvents of their generators. As applied to the case  of our interest, it says that to establish Theorem \ref{thm:glo} it suffices to show, once we know that $\mathfrak A_\eps$s are Feller generators, that 
\[ \grae \rez{\mathfrak A_\eps}  = \rez{\askew} \]
strongly for all $\lam >0$. We recall furthermore (see e.g. \cite{kallenbergnew} p. 385, see also  \cite{ethier}*{Chapter 4, Theorem 2.5}) that such convergence implies (and is implied by) weak convergence of distributions of the corresponding processes in the Skorokhod space of paths, provided that the initial distributions of these processes converge weakly also. 
 
\begin{remark}\label{rem:1} \rm \ 

\bf (a) \rm A skew Brownian motion can be equivalently characterized by $\gamma \coloneqq  \frac {1-p}p \in (0,\infty)$.  This parameter transforms in a simpler way: we have $\widetilde \gamma = \e^{-2\alpha} \gamma$. Yet different parameter, used preferably by Portenko, is 
\[c \coloneqq  2p -1 = \frac {1-\gamma}{1+\gamma}\in (-1,1),\]
and here we have    
\[ \widetilde c = \tfrac {1- \gamma \e^{-2\alpha}}{1+ \gamma \e^{-2\alpha}} = \tfrac {1- \e^{-2\alpha} + c(1+ \e^{-2\alpha})}{1+ \e^{-2\alpha} + c(1- \e^{-2\alpha})}.\]
 For $c=0$, that is, $\gamma =1$ or $p=\frac 12$, the last formula reduces to that of Portenko, who proved in that case that the limit skew Brownian motion's parameter is $\widetilde c = \tanh \alpha$. Similarly, \eqref{mthm:1} with $p=\frac 12$ reduces to \eqref{intro:2}. Finally, we note that in terms of $\beta\coloneqq -\frac 12 \ln \gamma$ the above transformations take the following elegant form: 
 \begin{align*} \gamma = \e^{-2\beta} \qquad &\longmapsto \qquad \widetilde \gamma = \e^{-2(\alpha+\beta)}, \\ c=\tanh \beta \qquad &\longmapsto \qquad\widetilde c = \tanh (\alpha +\beta). \end{align*}
 
\bf (b) \rm  As already remarked in \cite{manpil}, the assumption of non-negativity of $a_\eps$s in Theorem \ref{thm:glo} is not essential: it suffices to assume that \begin{equation}\label{dod} \sup_{\eps >0} \int_{\R} |a_\eps (x)|\ud x\eqqcolon M <\infty \end{equation} and that \eqref{intro:1} holds with certain constant $\alpha$ which now need not be non-negative. (Of course, in this case $\widetilde p$ o \eqref{mthm:1} can be smaller than $p$.) Hence, in Section \ref{sec:pot} containing the proof of Theorem \ref{thm:glo}, we drop the assumption of positivity and assume \eqref{dod} instead. 

Interestingly (and this will turn out important later), the assumption that $a_\eps$s are continuous at $0$ is also superfluous: we can work with $a_\eps$s that have one-sided limits at this point, provided that condition (c) in the definition of $\dom{\mathfrak A_\eps}$ is changed to read $\frac 12 f''(0+) + a_\eps (0+) f'(0+) = \frac 12 f''(0-) + a_\eps (0-) f'(0-)$. In the proof presented in Section \ref{sec:pot} we use only this weaker assumption on $a_\eps$.    \end{remark}

\section{Proof of Theorem \ref{thm:glo}} \label{sec:pot} 
 \newcommand{\crr}{\int_{-r}^r}
To prove our theorem we  express the resolvents of $\mathfrak A_\eps$ as integral operators
\begin{equation}\label{potm:0} \rez{\mathfrak A_\eps} g (x) = \crr K_{\lam,\eps} (x,y)g(y) \ud y , \qquad \lam,\eps >0, g \in C[-r,r] \end{equation}
where for certain two solutions, say, $k_\eps $ and $\ell_\eps$, the first non-decreasing, the second non-increasing,  of the related Sturm--Liouville eigenvalue problem, we have 
\begin{equation}\label{potm:1} K_{\lam,\eps} (x,y) = \frac{2 k_\eps (x)\ell_\eps (y)}{w_\eps(y)} [x\le y] +  \frac{2k_\eps (y)\ell_\eps (x)}{w_\eps(y)} [x> y], \quad x,y\in [-r,r],\end{equation}  
with $w_\eps \coloneqq  k_\eps ' \ell_\eps - k_\eps \ell_\eps'$. The existence and convergence of these solutions is established in Section \ref{sec:eac}, and the convergence of resolvents --- in Section \ref{sec:cor}. The latter convergence, in conjunction with the Trotter--Kato--Neveu theorem, 
will be used to  prove our theorem.

To recall, it is a classical result (see e.g.  \cite{lions3}*{Section VIII.2.7.1}, \cite{debmik}*{Section 5.10} or \cite{robinson2020}*{Chapter 17})) that the resolvents of Sturm--Liouville operators can be expressed by means of two independent eigenvectors as in \eqref{potm:0} and \eqref{potm:1}.  In the probabilistic context, however, it is natural to use two special eigenvectors: one non-decreasing and one non-increasing (as above) so that in particular the Green's function $K_{\lam,\eps}$ is nonnegative. W. Feller attributes this idea to E. Hille (\cite{fellera3}*{p. 483}, \cite{fellera4}*{p. 13}) end develops it in his fundamental paper \cite{fellera3}*{Section 12} devoted to the classification of boundary conditions for diffusion processes; see also \cite{engel}*{Section VI.4.b}.

\subsection{The existence and convergence of eigenvectors of related Sturm--Liouville problems}\label{sec:eac}
Throughout the rest of this section, instead of working with parameter $p$ that transforms in a more complex way, we work with $\gamma = \frac{1-p}p$. Our first goal, given $\lam >0$, is to find, for each $\eps >0$, a solution to the Sturm--Liouville eigenvalue problem 
\begin{equation}\label{eac:1} \tfrac 12 k_\eps '' + a_\eps k_\eps' = \lam k_\eps \end{equation}
with boundary and transmission conditions
\begin{equation}\label{eac:2} k_\eps '(-r) = 0, k_\eps (-r) = 1 \mquad { and } k_\eps'(0+) = \gamma k_\eps '(0-).\end{equation}
By a solution we understand a function $k_\eps\in C[-r,r]$ such that
 \begin{itemize} 
\item [(a) ] $k_\eps$ is twice continuously differentiable in either of the subintervals $[-r,0]$ and $[0,r]$ separately  (with one-sided derivatives at each intervals' ends), and satisfies equation \eqref{eac:1} there,
\item [(b) ] conditions \eqref{eac:2} are met.  
\end{itemize} 
We note that such a $k_\eps$  need not belong to the domain $\mc D_{p,\eps}$ of $\mathfrak A_\eps$ (with $p= \frac 1{1+\gamma}$) because $k_\eps'(r)$ in general differs from $0$ (in fact, one can argue that $k_\eps'(r)$ never vanishes). Nevertheless, since $k_\eps$ is required to be a continuous function and \eqref{eac:1} is to be satisfied, condition (c) of the definition of $\mc D_{p,\eps}$ holds automatically for a solution $k_\eps$.

\begin{figure}
\includegraphics[scale=0.5]{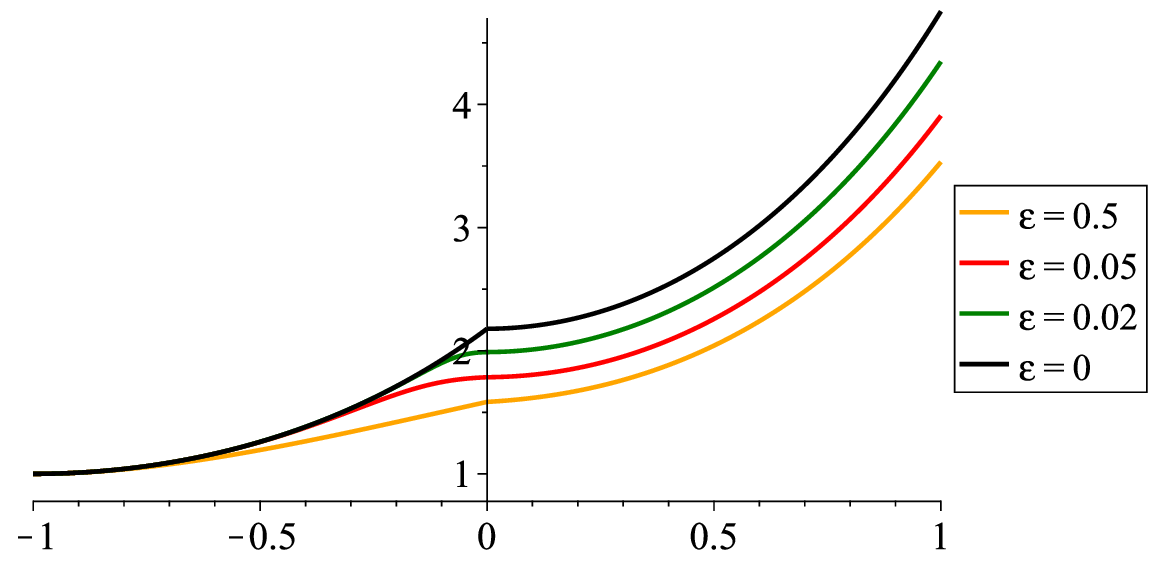}
\caption{\small{Solutions to the Sturm--Liouville eigenvalue problems \eqref{eac:1}--\eqref{eac:2} with $r=\lam =1, \gamma =\frac 14$ and $a_\eps$s obtained via \eqref{zero} from $a$ defined by $a(x)=\e^{\frac 12 x}[x<0]+\e^{-5x}[x\ge 0]$. Since $\widetilde \gamma = \gamma \e^{-2\alpha}$, is significantly smaller than $\gamma$, the limit function (labelled $\eps=0$) has a manifest tip at $0$, reflecting the transformed transmission condition $k'(0+)=\widetilde \gamma k(0-)$.}}\label{sl}
\end{figure}

Here is our main step towards establishing Theorem \ref{thm:glo}.  Notably, our proposition, speaking of the existence and convergence of solutions  $k_\eps$s,  involves a singular perturbation: as $\eps \to 0+$, the influence of the drift cofficient $a_\eps$ concentrates around $x=0$, so that in the limit there is no `regular' drift at all, but the `point' drift at this point is altered, as visible in the transition from the transmission condition $k_\eps '(0+) = \gamma k_\eps (0-)$ to that of $k'(0+) = \widetilde \gamma k(0-)$ (see also Figure \ref{sl}).

\begin{proposition} \label{prop:1} For each $\eps>0$ there is unique solution $k_\eps$ to the problem \eqref{eac:1}--\eqref{eac:2}, and it is strictly increasing. As $\eps\to 0$, the $k_\eps$ converges uniformly to the solution of 
\begin{equation}\label{eac:3} \tfrac 12 k'' = \lam k , \qquad  k'(-r) = 0, k (-r) = 1, k'(0+) = \widetilde \gamma k'(0-),\end{equation}
where $\widetilde \gamma = \e^{-2\alpha} \gamma$.  Together with $k_\eps$s converge their derivatives
but this convergence is merely pointwise and may fail at $x=0\pm$, that is, we have $\grae k_\eps' (x) = k'(x)$ for each $x\in [-r,0) \cup (0,r]$ separately.  
\end{proposition}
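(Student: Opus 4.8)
The plan is to rewrite \eqref{eac:1} in scale (flux) form. Set $A_\eps(x)\coloneqq \int_0^x a_\eps(z)\ud z$ and $u_\eps\coloneqq \e^{2A_\eps}k_\eps'$; then, on each of $[-r,0)$ and $(0,r]$, equation \eqref{eac:1} is equivalent to the first-order system $k_\eps'=\e^{-2A_\eps}u_\eps$, $u_\eps'=2\lam \e^{2A_\eps}k_\eps$, while (because $A_\eps(0)=0$) the transmission condition in \eqref{eac:2} becomes the multiplicative jump $u_\eps(0+)=\gamma\,u_\eps(0-)$. Two facts about $A_\eps$ will drive everything: the bound $|A_\eps|\le M$ coming from \eqref{dod}, and, directly from \eqref{intro:1} applied to the pair $(-\delta,\delta)$, the limit $A_\eps(\delta)-A_\eps(-\delta)=\int_{-\delta}^{\delta}a_\eps\to\alpha$ for every fixed $\delta\in(0,r)$ --- irrespective of how the mass of $a_\eps$ splits about $0$. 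Existence and uniqueness for fixed $\eps$ is then routine linear ODE theory: solve the system on $[-r,0]$ from the data $k_\eps(-r)=1$, $u_\eps(-r)=0$, propagate $k_\eps$ continuously and $u_\eps$ through its jump at $0$, and solve on $[0,r]$; condition (c) of $\mc D_{p,\eps}$ holds automatically, as already noted.

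Strict monotonicity follows from a first-crossing argument. Since $k_\eps(-r)=1>0$ and $u_\eps(-r)=0$, as long as $k_\eps>0$ the relation $u_\eps'=2\lam \e^{2A_\eps}k_\eps$ keeps $u_\eps$ increasing (its jump at $0$ merely multiplies it by $\gamma>0$), so $u_\eps>0$ and hence $k_\eps'=\e^{-2A_\eps}u_\eps>0$; therefore $k_\eps$ cannot decrease back to $0$, and one concludes $k_\eps>0$, $u_\eps>0$ and $k_\eps'>0$ on $(-r,r]$, i.e.\ $k_\eps$ is strictly increasing. For the uniform estimates, $|A_\eps|\le M$ makes the coefficients of the system bounded by $\e^{2M}$, so Gronwall bounds $k_\eps$ and $u_\eps$ (hence $k_\eps'$) uniformly in $\eps$; in particular $\{k_\eps\}$ is uniformly Lipschitz, and Arzel\`a--Ascoli furnishes, along a subsequence, a uniform limit $k\in C[-r,r]$.

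It remains to identify every such subsequential limit with the unique solution of \eqref{eac:3}; uniqueness of the latter (the free equation determines $k$ on $[-r,0]$ from $k(-r)=1$, $k'(-r)=0$, and on $[0,r]$ from continuity and $k'(0+)=\widetilde\gamma k'(0-)$) then upgrades subsequential to full convergence. First, pairing \eqref{eac:1} with test functions supported in $(-r,0)$ or in $(0,r)$ and using that the drift term is negligible there (immediate for $a_\eps\ge0$; see below otherwise), one finds that $k$ solves $\tfrac12 k''=\lam k$ on each open half; combined with continuous dependence on $[-r,-\delta]$ (where the data at $-r$ are frozen and the drift dies), this forces $k(x)=\cosh(\slam(x+r))$ on $[-r,0]$ and gives $k_\eps'\to k'$ uniformly there. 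For the transmission condition, integrate $u_\eps'=2\lam\e^{2A_\eps}k_\eps$ across $(-\delta,0)$ and $(0,\delta)$ and use the jump at $0$ to obtain
\[ \e^{2(A_\eps(\delta)-A_\eps(-\delta))}k_\eps'(\delta)=\gamma\, k_\eps'(-\delta)+\e^{-2A_\eps(-\delta)}R_\eps(\delta), \qquad |R_\eps(\delta)|\le C\delta, \]
with $C$ independent of $\eps$. Letting $\eps\to0$ along the subsequence (using $k_\eps'(-\delta)\to k'(-\delta)$ from the left, $A_\eps(\delta)-A_\eps(-\delta)\to\alpha$, and $\e^{-2A_\eps(-\delta)}\le \e^{2M}$) shows that $k_\eps'(\delta)$ converges; a second application of continuous dependence on $[\delta,r]$ identifies this limit with $k'(\delta)$, so that $\e^{2\alpha}k'(\delta)=\gamma k'(-\delta)+O(\delta)$. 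Letting $\delta\to0$ yields $k'(0+)=\gamma\e^{-2\alpha}k'(0-)=\widetilde\gamma k'(0-)$, which is precisely the transmission condition in \eqref{eac:3}.

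The same continuous-dependence arguments give $k_\eps'\to k'$ uniformly on $[-r,-\delta]$ and on $[\delta,r]$ for each $\delta>0$, i.e.\ pointwise on $[-r,0)\cup(0,r]$. By contrast, at $0\pm$ one has $k_\eps'(0\pm)=u_\eps(0\pm)$ and, tracing the flux back to $\mp\delta$, $u_\eps(0-)\approx \e^{2A_\eps(-\delta)}k_\eps'(-\delta)$; the surviving factor $\e^{2A_\eps(-\delta)}$ carries the concentrated mass and does not tend to $1$, so $\grae k_\eps'(0-)\ne k'(0-)$ in general, which is exactly why derivative convergence must be excluded at $0\pm$. The main obstacle is the transmission computation: one must see that the factors $\e^{\pm2A_\eps(\pm\delta)}$ absorb the entire concentrated drift and combine --- via $A_\eps(\delta)-A_\eps(-\delta)\to\alpha$ --- into the single factor $\e^{-2\alpha}$, independently of how the mass distributes around $0$. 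A secondary technical point, needed once the sign restriction on $a_\eps$ is dropped (Remark \ref{rem:1}(b)), is that $\int a_\eps k_\eps'$ over compacts away from $0$ still vanishes; this is handled by integrating by parts and invoking $\int_x^y a_\eps\to0$ together with the uniform bound $M$.
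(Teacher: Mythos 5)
Your argument is correct in substance but follows a genuinely different route from the paper's. The paper recasts \eqref{eac:1}--\eqref{eac:2} as a fixed-point equation $k_\eps = T_\eps k_\eps$ for an integral operator whose kernel $V_\eps(y,z)=\e^{-2\int_z^y a_\eps}\left(\gamma[yz<0]+[yz\ge 0]\right)$ already encodes the transmission condition; it shows that the $T_\eps$ are contractions with a \emph{common} contraction constant in a Bielecki-type norm, that $T_\eps\to T$ (by dominated convergence, using only pointwise convergence of the signed integrals $\int_z^y a_\eps$), and then invokes the stability of fixed points under convergence of equi-contractive maps. You instead pass to the first-order flux system, obtain existence, uniqueness, positivity and strict monotonicity by elementary ODE arguments, extract a subsequential uniform limit via Gronwall and Arzel\`a--Ascoli, and identify it by localizing: continuous dependence away from $0$ plus the integrated flux balance across $(-\delta,\delta)$. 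Your route makes the mechanism behind $\gamma\mapsto\e^{-2\alpha}\gamma$ completely transparent --- the factor $\e^{-2\alpha}$ is literally the limit of the multiplicative jump $\e^{2(A_\eps(\delta)-A_\eps(-\delta))}$ of the integrating factor across the shrinking window --- and it explains cleanly why $k_\eps'(0\pm)$ need not converge. The paper's route buys uniform convergence of $k_\eps$ with no compactness or uniqueness-of-limit step, and transfers essentially verbatim to the star-graph setting of Section \ref{sec:gene}.

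Two points in your sketch need care, though neither is fatal. First, in the signed case the verification that $\int a_\eps k_\eps'\varphi\to 0$ is smoother if you stay with the flux variable: writing $a_\eps k_\eps'=-\tfrac 12\bigl(\e^{-2B_\eps}\bigr)'v_\eps$ with $B_\eps(x)\coloneqq\int_{-r}^x a_\eps$ and $v_\eps\coloneqq\e^{2B_\eps}k_\eps'$, one integration by parts gives $\tfrac 12\int \e^{-2B_\eps}(v_\eps\varphi)'$, whose integrand is uniformly bounded and converges pointwise off $0$ (since $v_\eps'=2\lam\e^{2B_\eps}k_\eps$ is uniformly bounded and $B_\eps\to 0$ pointwise there); the naive integration by parts against $k_\eps'\varphi$ regenerates a term $\int B_\eps\, a_\eps k_\eps'\varphi$ whose control requires \emph{uniform} smallness of $B_\eps$ on compacts, which \eqref{intro:1} does not supply. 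Second, and for the same reason, the claimed uniform convergence of $k_\eps'$ on $[-r,-\delta]$ is an overclaim in general (the flux $v_\eps$ converges uniformly, but $\e^{-2B_\eps}\to 1$ only pointwise); this is harmless since the proposition asserts only pointwise convergence of derivatives, and likewise the statement that $k_\eps'(\delta)$ ``converges'' should be read as: every subsequential limit is pinned to $k'(\delta)$ by continuous dependence on $[\delta,r]$ and lies within $O(\delta)$ of $\e^{-2\alpha}\gamma k'(-\delta)$, after which $\delta\to 0$ gives the transmission condition.
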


For the proof of Proposition \ref{prop:1} we need a couple of lemmas. 

\begin{lemma} A function $k_\eps$ is a solution to the problem  \eqref{eac:1}--\eqref{eac:2} iff 
\begin{equation}\label{eac:4} k_\eps (x) = 1 + 2\lam \int_{-r}^x \int_{-r}^y V_\eps (y,z) k_\eps (z) \ud z \ud y, \qquad x\in [-r,r]\end{equation} where 
\( V_\eps (y,z) \coloneqq  \e^{-2\int_z^y a_\eps (u)\ud u} \left ( \gamma [yz< 0] + [yz\ge  0] \right ).\) \label{lem:1}
\end{lemma}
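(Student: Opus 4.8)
The plan is to prove the equivalence in Lemma \ref{lem:1} by converting the differential problem \eqref{eac:1}--\eqref{eac:2} into the integral equation \eqref{eac:4} via two integrations, carefully tracking the transmission condition at $0$ through the integrating factor. First I would rewrite equation \eqref{eac:1} in self-adjoint form. Multiplying $\tfrac 12 k_\eps'' + a_\eps k_\eps' = \lam k_\eps$ by the integrating factor $\e^{2\int_{-r}^y a_\eps(u)\ud u}$ turns the left side into a perfect derivative on each subinterval where $a_\eps$ is continuous: indeed $\bigl(\e^{2\int_{-r}^y a_\eps}\, k_\eps'(y)\bigr)' = 2\lam\, \e^{2\int_{-r}^y a_\eps}\, k_\eps(y)$. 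The boundary condition $k_\eps'(-r)=0$ lets me integrate this once from $-r$ to $y$ cleanly on $[-r,0)$, obtaining $k_\eps'(y) = 2\lam\,\e^{-2\int_{-r}^y a_\eps}\int_{-r}^y \e^{2\int_{-r}^z a_\eps} k_\eps(z)\ud z$, which I would repackage as $k_\eps'(y) = 2\lam\int_{-r}^y \e^{-2\int_z^y a_\eps(u)\ud u} k_\eps(z)\ud z$.

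The crux is the passage across $0$, and this is the step I expect to be the main obstacle. On $(0,r]$ the same self-adjoint integration holds, but now the constant of integration at $0+$ is dictated by the transmission condition $k_\eps'(0+)=\gamma k_\eps'(0-)$ rather than by a homogeneous condition. I would integrate the exact-derivative identity separately on $(0,y]$ and combine it with the value of $\e^{2\int_{-r}^{0+}a_\eps}k_\eps'(0+)$, expressed through $k_\eps'(0-)$ using the factor $\gamma$. The key bookkeeping is that the jump relation $k_\eps'(0+)=\gamma k_\eps'(0-)$ inserts exactly the factor $\gamma$ in front of the portion of the inner integral coming from $z<0$. Tracking the cancellations in the ratios of exponentials, one finds that for $y>0$ the derivative becomes $k_\eps'(y) = 2\lam\int_{-r}^y \e^{-2\int_z^y a_\eps(u)\ud u}\bigl(\gamma[z<0]+[z\ge 0]\bigr) k_\eps(z)\ud z$, which is precisely $2\lam\int_{-r}^y V_\eps(y,z)k_\eps(z)\ud z$ once one checks that the sign pattern of $yz$ reproduces $\gamma[yz<0]+[yz\ge 0]$ on both $y<0$ and $y>0$. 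Integrating $k_\eps'$ once more from $-r$ to $x$ and using $k_\eps(-r)=1$ yields \eqref{eac:4}.

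For the converse direction I would start from \eqref{eac:4}, differentiate under the integral sign to recover $k_\eps'(x) = 2\lam\int_{-r}^x V_\eps(x,z)k_\eps(z)\ud z$, and read off each required property. Setting $x=-r$ gives $k_\eps(-r)=1$ and $k_\eps'(-r)=0$. Differentiating once more on each subinterval, where $V_\eps(y,z)$ is smooth in $y$ because $a_\eps$ has one-sided limits, produces equation \eqref{eac:1}. The transmission condition reemerges by comparing the one-sided limits: as $x\to 0-$ the factor in front of the $z<0$ integrand is $1$ (since $yz\ge 0$), whereas as $x\to 0+$ it is $\gamma$ (since $yz<0$ for $z<0$), and because the $z\ge 0$ part of the integral vanishes at $x=0$, the two one-sided derivatives satisfy exactly $k_\eps'(0+)=\gamma k_\eps'(0-)$. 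Throughout, the only analytic subtlety is justifying differentiation under the integral and the evaluation of one-sided limits, which is routine given the continuity of $k_\eps$ and the boundedness and piecewise continuity of $a_\eps$; everything else is the exponential-factor algebra described above.
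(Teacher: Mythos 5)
Your proposal is correct and follows essentially the same route as the paper: multiply by the integrating factor $\e^{2\int_{-r}^{\cdot} a_\eps}$, integrate once on $[-r,0]$ using $k_\eps'(-r)=0$, propagate across $0$ via the transmission condition $k_\eps'(0+)=\gamma k_\eps'(0-)$ (which is exactly where the factor $\gamma[yz<0]$ in $V_\eps$ originates), integrate once more, and obtain the converse by direct differentiation. The bookkeeping of the exponential factors and the one-sided limits at $0$ is handled just as in the paper's proof.
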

\begin{proof} 
Multiplying both sides of \eqref{eac:1} by the integrating factor $\e^{2\int_{-r}^x a_{\eps}(z)\ud z}$ and integrating we check that a solution has to satisfy 
\begin{equation}\label{eac:5} k_\eps '(x) = 2\lam \int_{-r}^x \e^{-2\int_{y}^x a_\eps (z)\ud z} k_\eps (y) \ud y, \qquad x \in [-r,0-].\end{equation}
Thus, we must have 
\begin{equation}\label{eac:5a} k_\eps (x) = 1 + 2\lam \int_{-r}^x \int_{-r}^y  \e^{-2\int_{z}^y a_\eps (u)\ud u} k_\eps (z) \ud z \ud y
\end{equation} and this is precisely condition \eqref{eac:4} for $x\in [-r,0]$.

Next, since $k_\eps $ is to satisfy $k_\eps'(0+) = \gamma k_\eps'(0-)$, where $k_\eps '(0-)$ is given by \eqref{eac:5},  we must have similarly 
\begin{align} \nonumber 
k_\eps '(x) &= \gamma k_\eps'(0-) \e^{-2\int_{0}^x a_\eps(z)\ud z} + 2\lam \int_{0}^x \e^{-2\int_{y}^x a_\eps (z)\ud z} k_\eps (y) \ud y\\
&= 2\lam \int_{-r}^x \e^{-2\int_{y}^x a_\eps(z)\ud z} \left (\gamma [y<0] + [y\ge 0]\right ) k_\eps (y) \ud y \nonumber \\
&= 2\lam \int_{-r}^xV_\eps (x,y) k_\eps (y) \ud y, \qquad x\in [0+,r].  \label{eac:6}
\end{align} 
Finally, using the fact that $k_\eps$ is to be continuous at $0$, we see that 
\begin{align}
k_\eps (x) = k_\eps (0)+ 
 2\lam \int_0^x \int_{-r}^y V_\eps (y,z) k_\eps (z) \ud z
\ud y, \qquad x\in [0,r];
\label{eac:7}
 \end{align}
this establishes   \eqref{eac:4}  for such $x$ because $k_\eps (0)$ is given by  \eqref{eac:5a}. 
 

Vice versa, a direct differentiation shows that $k_\eps$ satisfying \eqref{eac:4} is a solution to \eqref{eac:1}--\eqref{eac:2}. 
\end{proof}

In the proof of our next lemma a crucial role is played by the Bielecki-type norm (see \cites{abielecki,jedenipol,edwards,jachymski}) in $C[-r,r]$ defined as 
\[ \|f\|_\omega \coloneqq \max_{-r\le x\le r} \e^{-\omega(x+r)}|f(x)|\]
where $\omega>0$ is a constant. We note that this norm is equivalent to the usual maximum norm, and thus convergence in this norm is just the uniform convergence.

\begin{lemma}\label{lem:2} Let $T_\eps$ be the transformation of $C[-r,r]$ into itself given by 
\begin{equation*} (T_\eps f)(x) = 1 + 2\lam \int_{-r}^x \int_{-r}^y V_\eps (y,z) f(z) \ud z \ud y, \qquad x\in [-r,r]\end{equation*}
where $V_\eps $ has been defined in Lemma \ref{lem:1}. Then, for $M$ of \eqref{dod},
\begin{equation}\label{eac:8}
 \|T_\eps f - T_\eps g \|_\omega \le \frac {2\lam \e^{2M}\max(1,\gamma)}{\omega^2}\| f - g\|_\omega, \qquad f,g 
\in C[-r,r]. \end{equation}

\end{lemma}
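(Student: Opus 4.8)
The plan is to recognize that \eqref{eac:8} is a straightforward contraction estimate whose whole point is that the Bielecki weight converts the iterated integration into the decaying prefactor $\omega^{-2}$. First I would set $h \coloneqq f-g$; since $T_\eps$ is affine in its argument (the constant term $1$ is common to $T_\eps f$ and $T_\eps g$), only the linear part survives in the difference:
\[ (T_\eps f - T_\eps g)(x) = 2\lam \int_{-r}^x \int_{-r}^y V_\eps (y,z)\, h(z) \ud z \ud y. \]

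Next I would bound the kernel uniformly in $\eps$. The bracketed factor $\gamma [yz<0] + [yz\ge 0]$ takes only the values $\gamma$ and $1$, hence is at most $\max(1,\gamma)$, while the exponential factor is controlled by $\left| \e^{-2\int_z^y a_\eps (u)\ud u}\right| \le \e^{2\int_\R |a_\eps (u)|\ud u} \le \e^{2M}$, using exactly hypothesis \eqref{dod} (and not the positivity of the $a_\eps$). This yields $|V_\eps (y,z)| \le \e^{2M}\max(1,\gamma)$ for all $y,z$ and all $\eps$.

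Then I would insert the definition of the Bielecki norm through the pointwise bound $|h(z)| \le \e^{\omega(z+r)}\|h\|_\omega$ and compute the iterated integral over the simplex $-r\le z\le y\le x$. The inner integral gives $\int_{-r}^y \e^{\omega(z+r)}\ud z = \omega^{-1}\bigl(\e^{\omega(y+r)}-1\bigr) \le \omega^{-1}\e^{\omega(y+r)}$, and integrating once more in $y$ gives $\int_{-r}^x \omega^{-1}\e^{\omega(y+r)}\ud y \le \omega^{-2}\e^{\omega(x+r)}$. Combining the three bounds,
\[ |(T_\eps f - T_\eps g)(x)| \le \frac{2\lam \e^{2M}\max(1,\gamma)}{\omega^2}\,\|h\|_\omega\, \e^{\omega(x+r)}; \]
multiplying by $\e^{-\omega(x+r)}$ and taking the maximum over $x\in[-r,r]$ produces \eqref{eac:8} directly.

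There is essentially no obstacle here: the argument is a routine majorization. The only points deserving care are the uniform-in-$\eps$ control of the exponential integrating factor, which is precisely where the single standing assumption \eqref{dod} enters, and the observation that the weight $\e^{-\omega(x+r)}$ is tuned so that the double integral against it contributes the factor $\omega^{-2}$. This is what makes the estimate useful downstream: choosing $\omega$ large enough that $2\lam \e^{2M}\max(1,\gamma)\,\omega^{-2} < 1$ turns $T_\eps$ into a contraction on $C[-r,r]$ simultaneously for every $\eps$, so that Banach's fixed point theorem supplies the unique solution $k_\eps$ of \eqref{eac:4}, that is, of \eqref{eac:1}--\eqref{eac:2}, which is the next step toward Proposition \ref{prop:1}.
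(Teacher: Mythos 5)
Your proof is correct and follows the same route as the paper's: bound $|V_\eps(y,z)|$ by $\e^{2M}\max(1,\gamma)$ using \eqref{dod}, insert $|f(z)-g(z)|\le \e^{\omega(z+r)}\|f-g\|_\omega$, and evaluate the iterated exponential integral to extract the factor $\omega^{-2}$. Nothing is missing.
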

\begin{proof} Since $|V_\eps (y,z)|\le \e^{2M} \max (1,\gamma)\eqqcolon \widetilde M$, we see that  
\begin{align*}
 \|T_\eps f - T_\eps g \|_\omega &\le 2\lam \widetilde M \max_{-r\le x\le r} \left  [ \e^{-\omega(x+r)} \int_{-r}^x \int_{-r}^y |f(z) -g(z)|\ud z \ud y\right ] \\ &\le 2\lam  \widetilde M \|f-g\|_\omega \max_{-r\le x\le r} \left [ \e^{-\omega(x+r)} \int_{-r}^x \int_{-r}^y \e^{\omega(z+r)}\ud z \ud y\right ] \\&< \frac {2\lam \e^{2M} \max(1,\gamma)}{\omega^2}\| f - g\|_\omega. \qedhere
\end{align*}
 \end{proof}

Our final lemma speaks of convergence of $T_\eps$. 

\begin{lemma}\label{lem:3} We have $\grae T_\eps f= Tf$, where 
\[ (Tf)(x) = 1 + 2\lam \int_{-r}^x \int_{-r}^y V (y,z) f(z) \ud z \ud y, \qquad x\in [-r,r]\]
and \( V (y,z) \coloneqq  \e^{-2\alpha} \gamma [yz< 0] + [yz\ge   0] .\)
\begin{proof} Since \begin{align*}\|T_\eps f - Tf\|&\le 2\lam \max_{-r\le x\le r} \int_{-r}^x \int_{-r}^y |V_\eps (y,z) - V (y,z)|\, |f(z)| \ud z \ud y\\&=2\lam \int_{-r}^r \int_{-r}^y |V_\eps (y,z) - V (y,z)| \, |f(z)| \ud z \ud y, \end{align*}	 
and the integrand above is bounded by $2\e^{2M} \max(1,\gamma)\|f\|$, it suffices to show that $\grae V_\eps (y,z) = V (y,z)$ for all $y,z\neq 0$. This, however, follows by \eqref{intro:1}. \end{proof}
\end{lemma}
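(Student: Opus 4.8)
The plan is to reduce the uniform convergence $\grae T_\eps f = Tf$ to the vanishing of a single double integral, and then to invoke dominated convergence. Since $T_\eps$ and $T$ differ only through their kernels, the additive constant $1$ cancels, and I would first write
\[ (T_\eps f - Tf)(x) = 2\lam \int_{-r}^x \int_{-r}^y \bigl( V_\eps(y,z) - V(y,z)\bigr) f(z) \ud z \ud y. \]
Bounding $|f|$ inside, the quantity $|(T_\eps f - Tf)(x)|$ is dominated by the integral of $|V_\eps - V|\,|f|$ over $\{-r\le z\le y\le x\}$, which is nondecreasing in $x$; hence the supremum over $x\in[-r,r]$ is attained at $x=r$, giving
\[ \|T_\eps f - Tf\| \le 2\lam \int_{-r}^r\int_{-r}^y |V_\eps(y,z) - V(y,z)|\,|f(z)|\ud z\ud y. \]
It then suffices to show the right-hand side tends to $0$.

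To apply dominated convergence I would establish two ingredients. For domination, hypothesis \eqref{dod} yields $\bigl|\int_z^y a_\eps(u)\ud u\bigr|\le M$, so $\e^{-2\int_z^y a_\eps}\le \e^{2M}$ and therefore $|V_\eps(y,z)|\le \e^{2M}\max(1,\gamma)$; together with the same bound for $V$, the integrand is dominated, uniformly in $\eps$, by the integrable constant $2\e^{2M}\max(1,\gamma)\,\|f\|$ on the finite triangle. For pointwise convergence I would fix $y,z$ with $z\le y$ and $y,z\neq 0$ (the set $\{yz=0\}$ being Lebesgue-null, hence harmless). The decisive observation is that on the region of integration one always has $z\le y$, so $yz<0$ forces $z<0<y$; in that case \eqref{intro:1} applied with $x=z<0<y$ gives $\int_z^y a_\eps\to\alpha$, whence $V_\eps(y,z)\to\e^{-2\alpha}\gamma$. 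When instead $yz>0$ the interval lies on one side of $0$, so $\int_z^y a_\eps\to 0$ and $V_\eps(y,z)\to 1$. In both cases the limit is exactly $V(y,z)$, and dominated convergence drives the bounding integral to $0$.

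The only genuinely delicate point—and the step I would flag as the crux—is getting the orientation right in the exponential factor. A priori $\int_z^y a_\eps$ could tend to $\pm\alpha$, yet only the fixed value $\e^{-2\alpha}$ appears in the target kernel $V$. What rescues the argument is precisely that $V_\eps$ is evaluated solely on $\{z\le y\}$, so whenever $y$ and $z$ have opposite signs it must be $z$ that is negative and $y$ positive, pinning the limit to $+\alpha$. Everything else is routine: the uniform bound from \eqref{dod}, the reduction of the sup norm to the value at $x=r$, and the observation that failure of pointwise convergence on the null set $\{yz=0\}$ does not affect the integral.
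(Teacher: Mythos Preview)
Your proof is correct and follows essentially the same approach as the paper: bound $\|T_\eps f - Tf\|$ by the double integral over the full triangle, use \eqref{dod} for a uniform integrable majorant, and invoke \eqref{intro:1} for pointwise convergence of $V_\eps$ to $V$ off the null set $\{yz=0\}$. Your write-up is slightly more explicit than the paper's in justifying why the sup is attained at $x=r$ and why the orientation $z\le y$ forces the limit $+\alpha$ in the exponent, but these are exactly the implicit steps behind the paper's terse argument.
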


We are now ready to establish Proposition \ref{prop:1}.
\begin{proof}[Proof of Proposition \ref{prop:1}] For $\omega >\sqrt{2\lam \e^{2M} \max (1,\gamma)}$ 
Lemma \ref{lem:2} says that $T_\eps$s are contraction operators (in $C[-r,r]$ equipped with the Bielecki-type norm). It follows that for each $\eps >0$ there is a unique fixed point of $T_\eps$ and so, by Lemma \ref{lem:1}, there is a unique solution $k_\eps$ to the problem \eqref{eac:1}--\eqref{eac:2}. To see that this solution is increasing we first note that it is positive; this is the case  by the following reasons: (a) $k_\eps$ is obtained as a limit of iterates of $T_\eps$ starting from any initial value in $C[-r,r]$, (b) this initial value can thus  be chosen to be nonnegative, and (c) for nonnegative $f\in C[-r,r]$ we have $T_\eps f \ge 1_{[-r,r]}$.  Once positivity of $k_\eps$ is established, a look at \eqref{eac:5} and \eqref{eac:6} reveals that $k_\eps'$ is also positive, because so is $V_\eps$. 


Next, Lemma \ref{lem:2} states also that operators $T_\eps$ have a common contraction constant, and Lemma \ref{lem:3} states that $T_\eps$s converge to $T$. A well-known extension of the Banach Fixed Point Theorem (see e.g. \cites{jedenipol,zmarkusem2b}) says that $T$ is a contraction too (this can also be checked directly), and that the fixed points $k_\eps$ converge to the unique fixed point $k$ of $T$. Now, in view of the form of $V$, Lemma \ref{lem:1} tells us that $k$ is a solution to the problem \eqref{eac:3}. 

Finally, the fact that together with $k_\eps$s converge their derivatives is a direct consequence of the Lebesgue dominated convergence theorem, assumptions  \eqref{intro:1} and \eqref{dod},  formulae \eqref{eac:5} and \eqref{eac:6}, and convergence of $k_\eps$s (note that at $x=0\pm$ the argument fails: we possess no information on convergence of $\int_y^0a_\eps (z) \ud z$ as $\eps \to 0$ for $y<0$).  
\end{proof}

\begin{proposition}\label{prop:2} Given $\lam >0$, for each $\eps >0$, there is a unique solution to the Sturm--Liouville eigenvalue problem 
\begin{equation}\label{eac:10} \tfrac 12 \ell_\eps '' + a_\eps \ell_\eps' = \lam \ell_\eps \end{equation}
with boundary and transmission conditions
\begin{equation}\label{eac:11} \ell_\eps '(r) = 0, \ell_\eps (r) = 1 \mquad { and } \ell_\eps'(0+) = \gamma \ell_\eps '(0-),\end{equation}
and it is strictly decreasing. As $\eps\to 0$, $\ell_\eps$ converges uniformly to the solution of 
\begin{equation}\label{eac:12} \tfrac 12 \ell'' = \lam \ell , \qquad  \ell'(r) = 0, \ell (r) = 1, \ell'(0+) = \widetilde \gamma \ell'(0-),\end{equation}
where $\widetilde \gamma = \e^{-2\alpha} \gamma$.  Together with $\ell_\eps$s converge their derivatives, but this convergence is merely pointwise and may fail at $x=0\pm$. 
 \end{proposition}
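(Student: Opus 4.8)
The plan is to deduce Proposition~\ref{prop:2} from Proposition~\ref{prop:1} by means of the spatial reflection $x\mapsto -x$, which converts the right-anchored, non-increasing problem \eqref{eac:10}--\eqref{eac:12} into a left-anchored, non-decreasing one of precisely the type already settled. Concretely, I would set $\hat a_\eps(x)\coloneqq -a_\eps(-x)$ and, for any candidate solution $\ell_\eps$ of \eqref{eac:10}--\eqref{eac:11}, put $\hat\ell_\eps(x)\coloneqq \ell_\eps(-x)$. Using $\hat\ell_\eps'(x)=-\ell_\eps'(-x)$ and $\hat\ell_\eps''(x)=\ell_\eps''(-x)$, a one-line computation shows that $\ell_\eps$ satisfies \eqref{eac:10} on $[-r,0]$ and $[0,r]$ separately if and only if $\hat\ell_\eps$ satisfies $\tfrac12\hat\ell_\eps''+\hat a_\eps\hat\ell_\eps'=\lam\hat\ell_\eps$ there, while the data transform as follows: $\ell_\eps'(r)=0,\ \ell_\eps(r)=1$ turn into $\hat\ell_\eps'(-r)=0,\ \hat\ell_\eps(-r)=1$, and, because the reflection both swaps and negates the one-sided derivatives at $0$, the transmission condition $\ell_\eps'(0+)=\gamma\ell_\eps'(0-)$ turns into $\hat\ell_\eps'(0+)=\gamma^{-1}\hat\ell_\eps'(0-)$.

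It then remains to check that the reflected family $\hat a_\eps$ meets the hypotheses under which Proposition~\ref{prop:1} was proved in Section~\ref{sec:pot}, namely \eqref{dod} together with \eqref{intro:1} (recall that positivity of the drift was dropped there). The bound \eqref{dod} survives with the same constant $M$, since $\int_\R|\hat a_\eps|=\int_\R|a_\eps|$. For the concentration property, the substitution $v=-u$ gives, for $x<y$ with $x,y\neq 0$,
\[ \int_x^y \hat a_\eps(u)\ud u = -\int_{-y}^{-x} a_\eps(v)\ud v, \]
and the right-hand side tends, by \eqref{intro:1}, to $-\alpha\,[xy<0]$ as $\eps\to 0$; hence $\hat a_\eps$ obeys \eqref{intro:1} with the reflected constant $\hat\alpha\coloneqq -\alpha$. (If one only assumes, as in Remark~\ref{rem:1}(b), that $a_\eps$ has one-sided limits at $0$, then so does $\hat a_\eps$, with $\hat a_\eps(0\pm)=-a_\eps(0\mp)$, which is all the proof uses.)

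With these verifications in hand, Proposition~\ref{prop:1} applied to $\hat a_\eps$ and to the admissible parameter $\gamma^{-1}$ (which corresponds to $\hat p=1-p\in(0,1)$) produces, for each $\eps>0$, a unique strictly increasing $\hat\ell_\eps$, converging uniformly as $\eps\to 0$ to the solution $\hat\ell$ of $\tfrac12\hat\ell''=\lam\hat\ell$ with $\hat\ell'(-r)=0$, $\hat\ell(-r)=1$ and $\hat\ell'(0+)=\e^{-2\hat\alpha}\gamma^{-1}\hat\ell'(0-)$. Undoing the reflection through $\ell_\eps(x)=\hat\ell_\eps(-x)$ and $\ell(x)=\hat\ell(-x)$ transfers existence and uniqueness verbatim, turns the strict increase of $\hat\ell_\eps$ into the asserted strict decrease of $\ell_\eps$, and converts the limiting transmission condition into $\ell'(0+)=\bigl(\e^{-2\hat\alpha}\gamma^{-1}\bigr)^{-1}\ell'(0-)=\e^{2\hat\alpha}\gamma\,\ell'(0-)$. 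The merely pointwise derivative convergence transfers identically via $\ell_\eps'(x)=-\hat\ell_\eps'(-x)$, the exceptional set $[-r,0)\cup(0,r]$ being invariant under the reflection.

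The single delicate point — and the only place an error is easy to commit — is the bookkeeping of the two sign reversals, which must be tracked together: reflection inverts the transmission parameter ($\gamma\mapsto\gamma^{-1}$) and negates the concentrated drift ($\alpha\mapsto\hat\alpha=-\alpha$). Substituting $\hat\alpha=-\alpha$ into the transmission condition just obtained yields exactly $\ell'(0+)=\e^{-2\alpha}\gamma\,\ell'(0-)=\widetilde\gamma\,\ell'(0-)$, so that $\ell$ solves \eqref{eac:12}; it is precisely the cancellation of these two reflections that explains why the non-increasing solution inherits the same transformed permeability $\widetilde\gamma$ as the non-decreasing one. A self-contained alternative would avoid reflection altogether and simply rerun the fixed-point argument of Lemmas~\ref{lem:1}--\ref{lem:3} with the operator integrating from $r$ down to $x$; that route is routine but longer, whereas the reflection makes the structural reason for the identical $\widetilde\gamma$ transparent.
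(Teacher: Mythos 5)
Your proposal is correct and is exactly the paper's argument: the paper proves Proposition \ref{prop:2} by the same reflection $\ell_\eps(x)=k_\eps(-x)$ applied to Proposition \ref{prop:1} with $a_\eps(x)$ replaced by $-a_\eps(-x)$ and $\gamma$ by $\gamma^{-1}$. You have merely written out in full the sign bookkeeping ($\hat\alpha=-\alpha$ cancelling against $\gamma\mapsto\gamma^{-1}$) that the paper leaves implicit, and your verification of \eqref{dod} and \eqref{intro:1} for the reflected family is accurate.
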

\begin{proof}To see this, it suffices to apply Proposition \ref{prop:1} to the case where $a_\eps (x)$ is replaced by $-a_\eps (-x)$ and $\gamma$ is replaced by $\gamma^{-1}$ to obtain a $k_\eps$, and then define $\ell_\eps (x) \coloneqq k_\eps (-x), x\in [-r,r]$. \end{proof}

\subsection{Convergence of resolvents}\label{sec:cor}
In Section \ref{sec:tmt} we have stated that $\mathfrak A_\eps$s defined there are Feller generators. Indeed, it is easy to observe that each of these operators is densely defined and satisfies the maximum principle; a short calculation shows furthermore that the function, say, $f$, defined by the right-hand side of \eqref{potm:0} is a solution to the resolvent equation $\lam f - \mathfrak A_\eps f = g$ for $g\in C[-r,r]$.  This establishes the claim by the Hille--Yosida theorem for Feller semigroups \cites{bass,kniga,jedenipol,ethier,kallenbergnew}. 
In particular, by taking $a_\eps =0$ and modifying $p$ to become $\widetilde p$, we see that $\askew$ is a Feller generator also, and that its resolvent is given by the right hand side of \eqref{potm:0}  with $k_\eps$ replaced by the solution $k$ to \eqref{eac:3} and  $\ell_\eps$ replaced by the solution $\ell$ to \eqref{eac:12}. 

By the Trotter--Kato--Neveu theorem \cites{kniga,knigazcup,ethier,goldstein,pazy,kallenbergnew} to prove our Theorem \ref{thm:glo} it suffices to show that the resolvents of $\mathfrak A_\eps$ converge to the resolvent of $\askew$ (and this establishes weak convergence of the related processes, see \cites{ethier,kallenbergnew}). 

\begin{proposition}\label{prop:3} For each $\lam >0$, 
\[ \grae \rez{\mathfrak A_\eps} = \rez{\askew} \]
in the strong topology. 
\end{proposition}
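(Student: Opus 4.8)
The plan is to establish the strong convergence of resolvents by combining the explicit integral representation \eqref{potm:0}--\eqref{potm:1} with the convergence of eigenvectors already secured in Propositions \ref{prop:1} and \ref{prop:2}. Since strong convergence of operators on $C[-r,r]$ means $\grae \rez{\mathfrak A_\eps} g = \rez{\askew} g$ in the supremum norm for each fixed $g\in C[-r,r]$, and since by the representation this amounts to
\[ \grae \crr K_{\lam,\eps}(x,y) g(y) \ud y = \crr K_\lam(x,y) g(y) \ud y \quad \text{uniformly in } x, \]
where $K_\lam$ is the kernel built from the limit solutions $k,\ell$, it suffices to show that $K_{\lam,\eps} \to K_\lam$ in a manner strong enough to pass to the limit under the integral sign uniformly in $x$.

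First I would record that the limit kernel $K_\lam$ exists and represents $\rez{\askew}$: this is exactly the content of the observation in Section \ref{sec:cor} that $\askew$ is a Feller generator whose resolvent is given by the right-hand side of \eqref{potm:0} with $k_\eps,\ell_\eps$ replaced by the limit solutions $k,\ell$. Next I would turn to the Wronskian-type denominator $w_\eps = k_\eps'\ell_\eps - k_\eps \ell_\eps'$ appearing in \eqref{potm:1}. By Propositions \ref{prop:1} and \ref{prop:2} both $k_\eps \to k$ and $\ell_\eps \to \ell$ uniformly, while $k_\eps' \to k'$ and $\ell_\eps' \to \ell'$ pointwise on $[-r,0)\cup(0,r]$. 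A short computation using \eqref{eac:1} and \eqref{eac:10} shows that $w_\eps$ is not constant on $[-r,r]$ (because of the drift $a_\eps$), but that on each subinterval away from $0$ it is well controlled; the key point is that $w_\eps(y)$ stays bounded away from $0$ uniformly in $\eps$ for $y$ in any compact subset of $[-r,0)\cup(0,r]$, so that the quotients in \eqref{potm:1} converge pointwise to the corresponding quotients built from $k,\ell,w$.

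The main obstacle, as the parenthetical warnings in Propositions \ref{prop:1} and \ref{prop:2} make explicit, is the behaviour at $x=0\pm$: the derivatives $k_\eps',\ell_\eps'$ need not converge there, so one cannot simply argue pointwise convergence of the integrand on all of $[-r,r]$ and invoke dominated convergence naively. I would circumvent this by evaluating $w_\eps$ at a convenient point rather than relying on its local value near $0$. Because $w_\eps' = (\tfrac12 k_\eps''+a_\eps k_\eps')\cdot 2\ell_\eps - \dots$ can be computed from the equations, one finds that $w_\eps$ satisfies a simple first-order relation, and in particular its value can be anchored at, say, $y=-r$ or $y=r$ where the boundary conditions \eqref{eac:2} and \eqref{eac:11} pin down $k_\eps,\ell_\eps$ and their derivatives exactly; this yields a clean, $\eps$-stable lower bound on $|w_\eps|$ and its convergence. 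The single measure-zero point $y=0$ where convergence of the integrand may fail is harmless under integration in $y$, provided the integrand is uniformly bounded — which it is, since $k_\eps,\ell_\eps$ are uniformly bounded (being monotone with fixed boundary values) and $w_\eps$ is bounded below.

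Finally I would assemble the pieces: with $K_{\lam,\eps}(x,y) \to K_\lam(x,y)$ for all $(x,y)$ with $y\neq 0$ and a uniform bound $|K_{\lam,\eps}(x,y)|\le C$, the dominated convergence theorem gives $\crr K_{\lam,\eps}(x,y)g(y)\ud y \to \crr K_\lam(x,y)g(y)\ud y$ for each fixed $x$. To upgrade pointwise-in-$x$ convergence to uniform-in-$x$ convergence — which is what strong convergence in $C[-r,r]$ requires — I would use that the family $x\mapsto \crr K_{\lam,\eps}(x,y)g(y)\ud y$ is equicontinuous: each such function is $\rez{\mathfrak A_\eps}g$, hence lies in $\dom{\mathfrak A_\eps}$ with a uniformly bounded first derivative (estimable from \eqref{eac:5}--\eqref{eac:6} and the resolvent identity), so by Arzel\`a--Ascoli the pointwise limit is attained uniformly. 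This completes the verification of the hypothesis of the Trotter--Kato--Neveu theorem and hence, as noted in Section \ref{sec:cor}, proves Theorem \ref{thm:glo}.
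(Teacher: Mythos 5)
Your proposal is correct and follows essentially the same route as the paper: it uses the kernel representation \eqref{potm:0}--\eqref{potm:1}, the uniform convergence of $k_\eps,\ell_\eps$ from Propositions \ref{prop:1} and \ref{prop:2}, and crucially the same device of controlling the Wronskian not through the (possibly non-convergent) derivatives at $0\pm$ but through its explicit first-order relation $w_\eps'=-a_\eps w_\eps$ anchored at the boundary, followed by dominated convergence. The only difference is that the paper obtains uniformity in $x$ for free by grouping the resolvent as $2\ell_\eps h_\eps^{+}+2k_\eps h_\eps^{-}$ with each factor converging uniformly, whereas you take a (valid but unnecessary) detour through equicontinuity and Arzel\`a--Ascoli.
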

\begin{proof} Formula \eqref{potm:0} can be rewritten as $\rez{\mathfrak A_\eps} g = 2 \ell_\eps h_\eps^+ + 2 k_\eps h_\eps^- $ where 
\[ h_\eps^+ (x) = \int_{-r}^x \frac{k_\eps(y)}{w_\eps (y)} g(y) \ud y\mquad{ and } h_\eps^- (x) = \int_x^{r} \frac{\ell_\eps(y)}{w_\eps (y)} g(y) \ud y\] for  $x \in [-r,r]$. Hence, in view of Propositions \ref{prop:1} and \ref{prop:2}, our task reduces to showing that $\grae h_\eps^+ =h^+$ and $\grae h_\eps^- = h^-$, where \[ h^+ (x) = \int_{-r}^x \frac{k(y)}{w(y)} g(y) \ud y,\qquad  h^- (x) = \int_x^{r} \frac{\ell (y)}{w (y)} g(y) \ud y, \qquad x \in [-r,r],\] and $w=k' \ell - k \ell '  $.  

Now, a short calculation shows that $w_\eps' = - a_\eps w_\eps$ and thus $w_\eps (x) = C_\eps \e^{-\int_{-r}^x a_\eps (y)\ud y}$, where $C_\eps $ depends in a continuous way on $\ell_\eps (-r)$. Since $\grae \ell_\eps (-r)$ exists,  assumption \eqref{dod} implies that there is a constant $M_1$ such that $\frac 1{w_\eps (x)} \le M_1$ for $x\in [-r,r]$.  Furthermore, since $\grae k_\eps =k$ there is a constant $M_2$ such that $\|k_\eps\| \le M_2$ provided that $\eps $ is sufficiently small. It follows that, for such $\eps$,  the integrand below
\[ \max_{x\in [-r,r]} |h_\eps^+ (x) - h^+ (x)| \le  \int_{-r}^r \left |\frac{k_\eps(y)}{w_\eps (y)}  - \frac{k(y)}{w(y)}\right | |g(y)| \ud y\]
does not exceed a constant, and thus the integral converges to zero by Propositions \ref{prop:1} and \ref{prop:2} and the Lebesgue dominated convergence theorem. Convergence of $h_\eps^-$ to $h^-$ is established similarly.  \end{proof}



\section{A singular perturbation of Walsh's spider process}\label{sec:gene}
\newcommand{\stargraph}[2]{\begin{tikzpicture}
      \node[circle,fill=black,inner sep=0pt,minimum size=2pt] at (360:0mm) (center) {};
    \foreach \n in {1,...,#1}{
        \node [circle,fill=blue,inner sep=0pt,minimum size=3pt] at ({\n*360/#1}:#2cm) (n\n) {};
        \draw (center)--(n\n);} 
\end{tikzpicture}}

\begin{figure}
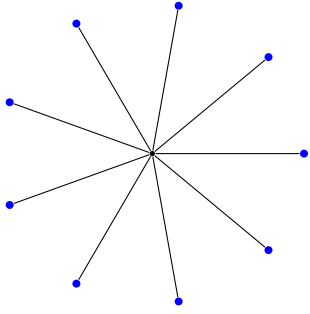

\stargraph{9}{2} 
\caption{Star graph $K_{1,\ka}$ with $\ka=9$ edges, each of them with length $r$.  
}\label{slg}
\end{figure} 

\newcommand{\ceka}{(C[0,r])^\ka}
\newcommand{\ceika}{(C[0,\infty])^\ka}

Walsh's spider process is a generalization of skew Brownian motion,  a process with values on a star graph --- see the works cited above. On each edge, while away from the graph center, this process behaves like a one-dimensional Brownian motion, and upon reaching the center it chooses to continue its motion on the $i$th edge with given probability $p_i$.  In this section we singularly perturb Walsh's spider process by drifts that differ from edge to edge and  show that the limit process is still the Walsh's spider but with transformed probabilities, as visible in \eqref{pta:1}. 

\subsection{Portenko-type approximation of Walsh's spider process}
Let a natural number $\ka \ge 2$ be given, and let $(C[0,r])^\ka$ be the Cartesian product of $\ka$ copies of the space $C[0,r]$ of continuous functions on $[0,r]$. Throughout this section we work with a family $a_\eps, \eps>0$ of members of $\ceka$ such that 
\begin{equation} \label{atek} \max_{i\in \kad} \sup_{\eps >0 } \int_0^r |a_{\eps,i} (u)| \ud u  < \infty \end{equation}
and, for certain reals $\alpha_i$,  
\begin{equation}\label{pta:0} \grae \int_y^z a_{\eps,i} (u) \ud u = \alpha_i [y=0], \quad 0\le y < z\le r, i \in \kad \coloneqq \{1,\dots, \ka\},\end{equation}
$a_{\eps,i}$ denoting the $i$th coordinate of the vector $a_\eps$. Again, such a family can be constructed with the help of $\ka$ continuous, integrable functions $a_i, i \in \kad $ on $[0,\infty)$ by  $a_{\eps,i} (x) = \eps^{-1}a_i (\eps^{-1} x), x \in [0,r]$, and then $\alpha_i = \int_0^\infty a_i(u)\ud u, i \in \kad$.   

Next, let $\mc C\subset \ceka$ be the subspace of $\ceka$ composed of $f\in \ceka$ such that $f_i(0)$ does not depend on $i \in \kad$. This subspace is isometrically isomorphic to the space of continuous functions on the (finite) star graph $K_{1,\ka}$, depicted at Figure \ref{slg}. 

Given a family of functions $a_\eps, \eps >0$ described above and positive parameters $p_1,\dots, p_\ka$ such that $\sum_{i=1}^\ka p_i =1$ we construct operators $\mathfrak A_\eps$ in $\mc C$ as follows. The domain $\dom{\mathfrak A_\eps}$ is composed of elements $f$ of $\ceka$ such that 
\begin{itemize} 
\item [(a) ] each coordinate $f_i$ of $f$ is twice continuously differentiable in $[0,r]$ (with one-sided derivatives at each intervals' ends),
\item [(b) ] $f_i'(r)=0, i \in \kad$ and $\sum_{i=1}^\ka p_if_i'(0)=0$,
\item [(c) ] $\frac 12 f_i''(0) + a_{\eps,i} (0) f_i'(0) $ does not depend on $i\in \kad$.
\end{itemize}
Also, for  $f\in \dom{\mathfrak A_\eps}$ we agree that 
\[ \mathfrak A_\eps f = \seq{\tfrac 12 f_i'' + a_{\eps,i} f_i'}. \]

The main theorem of this section (Theorem \ref{thm:glo2}, below) says, first of all, that each $\mathfrak A_\eps$ is a Feller generator: the related process is a Brownian motion perturbed by the drift $a_{\eps, i}$ in the $i$th edge; while at the inner vertex a particle chooses to continue its motion on the $i$th edge with probability $p_i$. Secondly, the theorem says that as $\eps \to 0$ the semigroups generated by $\mathfrak A_\eps$ converge to the semigroup governing the Walsh's spider process with parameters $\widetilde p_i, i \in \kad $ defined as follows 
\begin{equation}\label{pta:1} \widetilde p_i = \frac {p_i \e^{2\alpha_i}}{\sum_{j\in \mc \kad} p_j \e^{2\alpha_j}}, \qquad i \in \kad .\end{equation}

 \begin{thm}\label{thm:glo2} Operators $\mathfrak A_\eps, \eps >0$ are Feller generators, and 
\[  \grae \e^{t\mathfrak A_\eps} = \e^{t\awalsh}\]
strongly and uniformly for $t$ in compact subintervals of $[0,\infty)$, where $\awalsh$ is the Feller generator in $\mc C$ defined as follows. Its domain $\dom{\awalsh}$ is composed of members $f$ of $\mc C $ such that 
\begin{itemize} 
\item [(i) ] each coordinate $f_i$ of $f$ is twice continuously differentiable in $[0,r]$ (with one-sided derivatives at each intervals' ends),
\item [(ii) ] $f_i'(r)=0, i \in \kad$ and $\sum_{i=1}^\ka \widetilde {p_i}f_i'(0)=0$,
\item [(iii) ] $\frac 12 f_i''(0)$ does not depend on $i\in \kad$.
\end{itemize}
Also, for $f\in \dom{\awalsh}$ we agree that 
\[ \awalsh f = (\tfrac 12 f_i'')_{i\in \kad}. \]
\end{thm}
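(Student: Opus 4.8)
The plan is to mirror the strategy of Theorem~\ref{thm:glo}: reduce the convergence of semigroups to the convergence of resolvents via the Trotter--Kato--Neveu theorem, and establish the latter through an explicit, edge-by-edge analysis of the resolvent equation. First I would check that each $\mathfrak A_\eps$, and likewise $\awalsh$, is a Feller generator. As in Section~\ref{sec:cor}, this follows from the Hille--Yosida theorem once one verifies that the operator is densely defined, satisfies the positive maximum principle --- here the Kirchhoff-type vertex condition $\sum_i p_i f_i'(0)=0$ is exactly what guarantees this --- and that the resolvent equation $\lam f - \mathfrak A_\eps f = g$ is solvable in $\dom{\mathfrak A_\eps}$ for every $g\in\mc C$; the case $\awalsh$ is recovered by taking $a_\eps=0$ and $\widetilde p_i$ in place of $p_i$. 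It then suffices to prove $\grae \rez{\mathfrak A_\eps} = \rez{\awalsh}$ strongly for each $\lam>0$.

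To solve the resolvent equation I would work separately on each edge. Writing $f_i$ for the $i$th coordinate of $f=\rez{\mathfrak A_\eps}g$, the function $f_i$ is the unique solution on $[0,r]$ of $\tfrac12 f_i'' + a_{\eps,i}f_i' = \lam f_i - g_i$ subject to the reflecting condition $f_i'(r)=0$ and to a common, as yet unknown, vertex value $f_i(0)=v$ (condition~(c) in the definition of $\dom{\mathfrak A_\eps}$ then holds automatically, since $g\in\mc C$). Decomposing $f_i = v\,\phi_{\eps,i} + \psi_{\eps,i}$, where $\phi_{\eps,i}$ solves the homogeneous problem with $\phi_{\eps,i}(0)=1,\ \phi_{\eps,i}'(r)=0$ and $\psi_{\eps,i}$ solves the inhomogeneous problem with $\psi_{\eps,i}(0)=0,\ \psi_{\eps,i}'(r)=0$, reduces the matter to determining the scalar $v$. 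The per-edge functions $\phi_{\eps,i}$ and $\psi_{\eps,i}$, together with their uniform convergence as $\eps\to0$, are furnished by arguments identical to those behind Propositions~\ref{prop:1} and~\ref{prop:2} (the interval $[0,r]$ with reflection at $r$ and concentrating drift at the vertex $0$ is precisely the situation treated there).

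The crux is the determination of $v$ from the Kirchhoff condition $\sum_i p_i f_i'(0)=0$, because --- as flagged in Proposition~\ref{prop:1} --- the derivatives $f_i'(0)$ at the vertex need \emph{not} converge as $\eps\to0$. This is exactly where the handy formula enters: multiplying the resolvent equation by the integrating factor $\e^{2\int_0^x a_{\eps,i}(u)\ud u}$ and integrating over $[0,r]$, the reflecting condition $f_i'(r)=0$ yields the flux identity
\[ f_i'(0) = -2\int_0^r \e^{2\int_0^x a_{\eps,i}(u)\ud u}\bigl(\lam f_i(x) - g_i(x)\bigr)\ud x. \]
By \eqref{pta:0} the integrating factor tends to $\e^{2\alpha_i}$ for every $x>0$, while \eqref{atek} keeps it bounded; hence, since $f_i$ converges uniformly, dominated convergence gives $\grae f_i'(0) = \e^{2\alpha_i}\,\tilde f_i'(0)$, where $\tilde f_i$ is the corresponding solution for the limit ($a=0$) problem. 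Substituting into $\sum_i p_i f_i'(0)=0$ and letting $\eps\to0$ turns the vertex condition into $\sum_i p_i\e^{2\alpha_i}\tilde f_i'(0)=0$, i.e.\ $\sum_i \widetilde p_i \tilde f_i'(0)=0$ with $\widetilde p_i$ as in \eqref{pta:1} --- precisely condition~(ii) of $\awalsh$. Passing to the limit in the formula $f_i=v\phi_{\eps,i}+\psi_{\eps,i}$ then gives $\grae \rez{\mathfrak A_\eps}g = \rez{\awalsh}g$.

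The main obstacle is thus twofold and concentrated at the vertex: the derivatives $f_i'(0)$ are ill-behaved in the limit, and all $\ka$ edges are coupled through the single Kirchhoff equation. The flux identity resolves the first difficulty by trading $f_i'(0)$ for a convergent integral, and it makes transparent how the integrating factor manufactures the weights $\e^{2\alpha_i}$ that transform $p_i$ into $\widetilde p_i$. To complete the argument one must still check that the limiting linear system for $v$ is non-degenerate --- concretely, that the denominator $\sum_i p_i\e^{2\alpha_i}\phi_i'(0)$ arising when one solves for $v$ stays bounded away from zero --- so that $v=v_\eps$ indeed converges to the vertex value of $\rez{\awalsh}g$; this is where the strict monotonicity of the edge solutions (hence $\phi_i'(0)\neq0$) from Propositions~\ref{prop:1}--\ref{prop:2} is used.
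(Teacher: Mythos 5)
Your proposal is correct and its skeleton coincides with the paper's: generation via Hille--Yosida, reduction to resolvent convergence via Trotter--Kato--Neveu, and the decomposition of $\rez{\mathfrak A_\eps}g$ on each edge into a ``minimal'' part vanishing at the vertex plus a multiple of the exit law --- your $f_i=v\,\phi_{\eps,i}+\psi_{\eps,i}$ is exactly the paper's formula \eqref{pta:7}, with $\psi_{\eps,i}=(\rlae^0 g)_i$ and $\phi_{\eps,i}=(\mc L_{\lam,\eps})_i$. Where you genuinely depart from the paper is in how the weights $\e^{2\alpha_i}$ are extracted: the paper represents the vertex derivatives as $C_{\eps,i}(g)=2\int_0^r \ell_{\eps,i}(y)g_i(y)/w_{\eps,i}(y)\,\ud y$ and tracks the Wronskian, using $k_{0,i}=\e^{-2\alpha_i}k_i$ and hence $w_{0,i}=\e^{-2\alpha_i}w_i$, whereas you integrate the resolvent equation against the integrating factor over the whole edge and read off the flux identity, trading the (badly behaved) derivative at $0$ for an integral to which dominated convergence applies directly; this makes the appearance of $\e^{2\alpha_i}$ immediate from \eqref{pta:0} and is arguably more transparent. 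One presentational caveat: as written you apply the flux identity to $f_i$ itself and invoke its uniform convergence, which is circular at that stage of the argument; the identity should be applied to $\phi_{\eps,i}$ and $\psi_{\eps,i}$ separately (these do converge uniformly by the Proposition \ref{prop:1}--\ref{prop:2} machinery), after which the convergence and non-degeneracy of $v_\eps$ --- which your last paragraph correctly flags, and which holds because all $\phi_i'(0)$ have the same sign --- yield the uniform convergence of $f_i$. With that reordering the argument is complete and arrives at the same limit resolvent as \eqref{zdzis:1}.
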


The Feller process associated with $\awalsh$ is characterized in Remark \ref{rem:2} and Section \ref{rem:andrzeja} further down.

\begin{remark}\rm Formula \eqref{mthm:1} is a particular case of \eqref{pta:1}, corresponding to the case of $\ka =2$. To see this, we note first that viewing $C[-r,r]$ as a subspace of $(C[0,r])^2$ involves a change of perspective. To explain, since the natural isomorphism, say, $\mc I$, defines an image of $(f_1,f_2)\in (C[0,r])^2$ to be the $f\in C[-r,r]$ such that $f(x) = f_1(x)$ for $x\in [0,r]$ and $f(x) = f_2(-x), x\in [-r,0]$,  the second coordinate of $\mathfrak A_\eps (f_1,f_2)$ is $\tfrac 12 f''_2(\cdot ) - a_{\eps,2} (-\cdot) f_2'(-\cdot)$. It follows that the $a_\eps, \eps >0 $ of \eqref{intro:1} are obtained from  $(a_{\eps,1}, a_{\eps,2})$ by the relations $a_\eps(x) = - a_{\eps,2}(-x), x\in [-r,0)$ and $ a_\eps (x) = a_{\eps,1}(x), x \in (0,r]$ (notice the minus sign needed in the first case). Therefore,  $\alpha $ of \eqref{intro:1} and the $\alpha_i,i=1,2$ of \eqref{pta:1} are related by $\alpha = \alpha_1 - \alpha_2$. 
Hence, since $p_1=p$ and $1-p=p_2$, formula \eqref{pta:1} requires that $\widetilde p$  be equal to 
\[ \widetilde p_1= \frac {p\e^{2\alpha_1}}{p\e^{2\alpha_1}+(1-p)\e^{2\alpha_2}} = \frac p{p+(1-p)\e^{-2\alpha}},\] 
as in \eqref{mthm:1}.\end{remark}

\subsection{Proof of Theorem \ref{thm:glo2}.}

We begin our proof of Theorem \ref{thm:glo2} by establishing the following generation result.

\begin{proposition} \label{prop:4} Operators $\mathfrak A_\eps, \eps >0$ are Feller generators in $\mc C$, and so is $\askew$. \end{proposition}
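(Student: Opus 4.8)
The plan is to verify the hypotheses of the Hille--Yosida theorem for Feller generators invoked in Section \ref{sec:cor}: that each $\mathfrak A_\eps$ is densely defined, satisfies the positive maximum principle, and that $\lam - \mathfrak A_\eps$ has dense range (in fact is onto) for some, hence every, $\lam>0$. Density is immediate, since functions that are twice continuously differentiable on each edge and satisfy (a)--(c) are dense in $\mc C$. This mirrors the verification already carried out for the interval in Section \ref{sec:cor}.

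For the positive maximum principle, suppose $f\in\dom{\mathfrak A_\eps}$ attains a positive maximum at a point $x_0$. In the interior of an edge $i$ one has $f_i'(x_0)=0$ and $f_i''(x_0)\le 0$, so $\mathfrak A_\eps f(x_0)=\tfrac12 f_i''(x_0)\le 0$; at an outer end $x_0=r$ the reflecting condition $f_i'(r)=0$ together with a second-order Taylor expansion gives $f_i''(r)\le 0$ and the same conclusion. The essential case is the vertex $x_0=0$: since $f$ can only decrease as one moves into any edge, $f_i'(0)\le 0$ for every $i$, and because the $p_i$ are positive while $\sum_{i\in\kad}p_if_i'(0)=0$, all these one-sided derivatives must vanish. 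A Taylor expansion then yields $f_i''(0)\le 0$, and by (c) $\mathfrak A_\eps f(0)=\tfrac12 f_i''(0)\le 0$. The positive maximum principle in particular guarantees that the resolvent constructed below is positive and contractive.

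The heart of the matter is the range condition. Fix $\lam>0$ and $g\in\mc C$; we seek $f\in\dom{\mathfrak A_\eps}$ with $\lam f-\mathfrak A_\eps f=g$, that is, $\tfrac12 f_i''+a_{\eps,i}f_i'-\lam f_i=-g_i$ on each $[0,r]$ subject to $f_i'(r)=0$. Arguing as in Lemma \ref{lem:1} and Proposition \ref{prop:1} on the single interval $[0,r]$, one obtains on each edge a positive, strictly decreasing homogeneous solution $\psi_i$ with $\psi_i'(r)=0$ and $\psi_i(0)=1$ (so that $\psi_i'(0)<0$), and a particular solution $F_i$ with $F_i'(r)=0$ and $F_i(0)=0$; both are automatically $C^2$ since $g_i$ and the coefficients are continuous (or have one-sided limits). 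Every solution of the edge problem respecting the reflecting condition is then $f_i=c_i\psi_i+F_i$, and continuity at the vertex, $f_i(0)=c_i$, forces $c_i\equiv c$, leaving a single free constant.

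Two observations close the argument. Condition (c) is automatic: evaluating the equation at $0$ gives $\tfrac12 f_i''(0)+a_{\eps,i}(0)f_i'(0)=\lam c-g_i(0)$, which is independent of $i$ precisely because $g\in\mc C$. The remaining flux condition (b) reads $c\sum_{i\in\kad}p_i\psi_i'(0)=-\sum_{i\in\kad}p_iF_i'(0)$ and determines $c$ uniquely, since $\sum_{i\in\kad}p_i\psi_i'(0)<0$. Hence $\lam-\mathfrak A_\eps$ is onto and $\mathfrak A_\eps$ is a Feller generator. The operator $\askew$ is the special case $a_{\eps,i}\equiv 0$ with each $p_i$ replaced by $\widetilde p_i$, and is therefore covered by the same reasoning. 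I expect the main obstacle to be this range condition --- specifically the clean single-edge construction of $\psi_i$ and $F_i$ and the verification that the vertex matching system is uniquely solvable (the non-vanishing of $\sum_{i\in\kad}p_i\psi_i'(0)$) --- whereas density and the positive maximum principle are routine.
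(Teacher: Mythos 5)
Your proposal is correct and follows essentially the same route as the paper: the decomposition $f_i=F_i+c\,\psi_i$ is exactly the paper's $f=\rlae^0 g+\text{const}\cdot\mc L_{\lam,\eps}$ (your $F_i$ is the $i$th coordinate of the minimal resolvent $\rlae^0 g$, your $\psi_i$ is the exit law $(\mc L_{\lam,\eps})_i$), and the determination of $c$ from the flux condition, the automatic validity of (c) from the resolvent equation at the vertex, and the non-vanishing of $\sum_i p_i\psi_i'(0)$ all match Step 3 of the paper's argument. The only cosmetic difference is that the paper packages the same objects probabilistically (resolvent of the killed process plus exit law), which it then reuses verbatim in the convergence proof of Theorem \ref{thm:glo2}.
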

\begin{proof} It is relatively easy to see that $\mathfrak A_\eps, \eps >0$ satisfy the positive-maximum principle and are densely defined. Hence, the problem of proving that $\mathfrak A_\eps$ is a generator comes down to finding, given $\lam,\eps >0$ and $g\in \mc C$, a solution $f$ to \begin{equation}\label{pta:2}\lam f - \mathfrak A_\eps f = g;\end{equation} 
such a solution is necessarily unique. 
In what follows we fix $\lam$ and  $\eps$.   

\subsubsection*{Step 1. Construction of auxiliary functions $k_{\eps,i}$ and $\ell_{\eps,i}$} \

\noindent As in Proposition \ref{prop:1}, we check that, for each $i\in \kad$ there is a unique solution $k_{\eps,i}\in C[0,r]$ of the following Sturm--Liouville problem on 
$[0,r]$:
\[ \tfrac 12 k_{\eps,i}'' + a_{i,\eps} k_{\eps,i}' = \lam k_{\eps,i}, \qquad k_{\eps,i}(0)=0, k_{\eps,i}'(0)=1.\]
Indeed, we can find this solution as a unique fixed point of the transformation $T_{\eps,i} \colon C[0,r]\to C[0,r]$ defined by 
\begin{equation}\label{pta:3} (T_{\eps,i} f)(x) \coloneqq  \int_0^x \e^{-2\int_{0}^y a_{\eps,i} (z)\ud z } \ud y + 2\lam \int_{0}^x \int_0^y \e^{-2\int_{z}^y a_{\eps,i} (u)\ud u} f(z) \ud z\ud y\end{equation} for $x\in [0,r]$ and $f\in C[0,r]$,  $T_{\eps,i}$ being a contraction provided that $C[0,r]$ is equipped with the Bielecki-type norm $\|f\|_{\omega} \coloneqq \max_{0\le x\le r}\e^{-\omega x}|f(x)|$  with sufficiently large $\omega$.  

Next, as in Proposition \ref{prop:2}, we find solutions $\ell_{\eps, i}\in C[0,r]$ to the Sturm--Liouville problem
\[  \tfrac 12 \ell_{\eps,i}'' + a_{\eps,i} \ell_{\eps,i}' = \lam \ell_{\eps,i}, \qquad \ell_{\eps,i}(r)=1, \ell'_{\eps,i}(r)=0.\]

\subsubsection*{Step 2. Construction of auxiliary operators $\rlae^0$} \ 

\noindent With the help of the auxiliary functions constructed in the previous step, we define operators $R_{\lam,\eps}^0\in \mc L(\mc C)$ (the space of bounded linear operators on $\mc C$) as follows
\begin{equation}\label{pta:4} (\rlae^0 g)_i(x) \coloneqq  2k_{\eps,i}(x) \int_x^r \frac{\ell_{\eps,i}(y) g_i(y)}{w_{\eps,i}(y)}\ud y  +   2\ell_{\eps,i}(x) \int_0^x \frac{k_{\eps,i}(y) g_i(y)}{w_{\eps,i}(y)}\ud y    \end{equation}
for $x\in [0,r]$ and $g \in \mc C$, where $w_{\eps,i}\coloneqq k_{\eps,i}' \ell_{\eps,i} - \ell_{\eps,i}'k_{\eps,i}$ (as in the proof of Proposition \ref{prop:3} we need not worry about the denominator being $0$ at any point). A direct calculation persuades us that $f_i \coloneqq (\rlae^0g)_i, i\in \kad$ have the following properties: 
\begin{align*} 
f_i' (x) &=  2k_{\eps,i}'(x) \int_x^r \frac{\ell_{\eps,i}(y) g_i(y)}{w_{\eps,i}(y)}\ud y  +   2\ell_{\eps,i}'(x) \int_0^x \frac{k_{\eps,i}(y) g_i(y)}{w_{\eps,i}(y)}\ud y \\
f_i'' (x) &=  2k_{\eps,i}''(x) \int_x^r \frac{\ell_{\eps,i}(y) g_i(y)}{w_{\eps,i}(y)}\ud y  +   2\ell_{\eps,i}''(x) \int_0^x \frac{k_{\eps,i}(y) g_i(y)}{w_{\eps,i}(y)}\ud y  - 2 g_i(x),   \end{align*}
$x\in [0,r]$. In particular, since $k_{\eps,i}$ and $\ell_{\eps,i}$ are solutions of the appropriate Sturm--Liouville problems, \begin{equation} \label{pta:5} f_i(0)=0,f_i'(r)=0, i\in \kad \mquad{ and } \tfrac 12 f_i'' + a_{\eps,i} f_i' = \lam f_i - g_i.\end{equation} 
We note also use the following relation which is a direct consequence of the formula for $f_i'$: 
\begin{equation}\label{pta:6} (\rlae^0 g)_i'(0) = C_{\eps,i} (g) \coloneqq 2 \int_0^r \frac{\ell_{\eps,i}(y) g_i(y)}{w_{\eps,i}(y)}\ud y. \end{equation}  

\subsection*{Step 3. Construction of $\rlae$; its properties} Let $\mc L_{\lam,\eps} \coloneqq \mathsf 1 - \lam \rlae^0 \mathsf 1$ where $\mathsf 1 = (1_{[0,r]})_{i\in \kad}$ and $1_{[0,r]}(x) =1, x\in [0,r]$; $\mc L_{\lam,\eps}$ is the so-called \emph{exit law} (for the minimal process discussed in Remark \ref{rem:2}, later on). 

We claim that (see \eqref{pta:6} above for the definition of $C_{\eps,i}$) 
\begin{equation} f\coloneqq \rlae g \coloneqq \rlae^0 g + \frac {\sum_{i\in \kad} p_i C_{\eps,i} (g)}{\lam \sum_{i\in \kad} p_i C_{\eps,i} (\mathsf 1)}\mc L_{\lam,\eps} \label{pta:7}\end{equation}
solves \eqref{pta:2} --- notably, $C_{\eps,i}(\mathsf 1)$s are never zero, because $\ell_{\eps,i}$ and $w_{\eps,i}$ are positive. Indeed, first of all, all coordinates of $f$ are twice continuously differentiable since so are the coordinates of $\rlae^0g $ and $\mc L_{\lam,\eps}$. Similarly, $f_j'(r)=0$ because $(\rlae g)_j'(r)=0$ and $(\mc L_{\lam,\eps})'_j (r)=0$. Next, \eqref{pta:6} implies $(\mc L_{\lam,\eps})'_j (0) = - \lam C_j (\mathsf 1)$, and this in turn gives 
\[ f_j '(0) = C_{\eps,j}(g) - C_{\eps,j}(\mathsf 1)\frac {\sum_{j\in \kad} p_j C_{\eps,j} (g)}{\sum_{j\in \kad} p_jC_{\eps,j} (\mathsf 1)}, \qquad j \in \kad.  \]
Multiplying by $p_j$ both sides and summing over $j\in \kad$, yields $\sum_{j\in \kad} p_j f_j'(0)=0$. We have thus shown that $f$ satisfies conditions (a) and (b) of the definition of $\dom{\mathfrak A_\eps}$. 

To complete the proof of the generation theorem for $\mathfrak A_\eps$, let $\mc A_\eps$ be the extension of the operator $\mathfrak A_\eps$ to the domain where condition (c) of the definition of $\dom{\mathfrak A_\eps}$ is satisfied, but the remaining ones need not. Then, relations \eqref{pta:5} tell us that $\rlae^0g$ belongs to the domain of $\mc A_\eps$  for all $g \in \mc C$; indeed, the left-hand side of the differential equation in \eqref{pta:5}, when evaluated at $0$, does not depend on $i$, because neither does its right-hand side. Furthermore, $(\lam - \mc A_\eps)\rlae^0 g = g$. Similarly, $\mc L_{\lam,\eps}$ belongs to this domain and a short calculation reveals that $\mc A_\eps \mc L_{\lam,\eps} = \lam \mc L_{\lam,\eps}$. Hence, $\rlae $ also belongs to the domain of $\mc  A_\eps$ and we have $(\lam - \mc  A_\eps) \rlae g = g$. Now, this relation discloses  that, since $g_i(0)$ and $(\rlae g)_i (0)$ do not depend on $i$, neither does $(\mc A_\eps \rlae g)_i(0)$. This, however, means that $f=\rlae g$ satisfies condition (c) of the definition of $\dom{\mathfrak A_\eps}$ and thus belongs to $\dom{\mathfrak A_\eps}\subset \dom{\mc A_\eps}$ (because, as established above, $f$ satisfies conditions (a) and (b)).  For this reason, the relation turns into $(\lam - \mathfrak  A_\eps) \rlae g = g$, completing the proof of the fact that $\mathfrak A_\eps$ is a Feller generator.

Finally, the statement on $\askew$ is a particular case of that on $\mathfrak A_\eps$ --- to see this, it suffices to take $a_\eps =0$ and replace $p_i$s by $\widetilde p_i$s, which are certainly positive and add up to $1$ as well. \end{proof}

\begin{remark} \label{rem:2} \rm The family $\rlae^0, \lam >0$ defined in Step 2 of the proof is the resolvent of the \emph{minimal} process, that is, of the process which on each edge is a Brownian motion perturbed by the drift $a_{\eps,i}$, but is killed at and thus no longer defined from the first instant it touches the graph center. This information can be used, in conjunction with  the strong Markov property, to guess formula \eqref{pta:7}. To wit, $\rlae, \lam >0$ should be obtained as the limit, say, as $\delta\to 0+$, of the resolvents $\rlae^\delta, \lam >0$ describing processes that continue the minimal process as follows: at the instant when the minimal process reaches $0$, the  process related to $\rlae^\delta, \lam >0$ with probability $p_i$ starts anew at the $i$th edge at the distance $\delta>0$ from the center. The strong Markov property tells us, furthermore (see e.g. the calculation on pp. 2129-2130 in \cite{newfromold}), that 
\[ \rlae^\delta g = \rlae^0 g + \frac{\sum_{i\in \kad} p_i (\rlae^0 g)_i(\delta)}{\lam \sum_{i\in \kad} p_i (\rlae^0 
\mathsf 1)_i(\delta)} \mc L_{\lam,\eps}. \]
Since $ (\rlae^0 g)_i(0)= (\rlae^0 1)_i(0) =0$, to find the limit we  divide both the numerator and the denominator in the fraction above by $\delta$. Then, letting $\delta \to 0$, yields, by 
\eqref{pta:6}, the required formula. 
\end{remark}

The following result is a by-product of the proof of Proposition \ref{prop:4}. 
\begin{corollary}\label{wnos} We have (see  \eqref{pta:7})
\begin{equation} \rez{\mathfrak A_\eps}= \rlae , \qquad \lam,\eps >0.\label{pta:8} 
\end{equation}
In particular, 
\begin{equation}\label{zdzis:1} \rez{\askew} g = \rla^0g +  \frac {\sum_{i\in \kad} \widetilde p_i C_{i} (g)}{\lam \sum_{i\in \kad} \widetilde p_i C_{i} (\mathsf 1)} \mathcal L_\lam, \end{equation}
where 
\begin{itemize}
\item  [1. ] the operators $\rla^0, \lam >0$ are defined by 
\begin{equation}\label{zdzis:2} (\rla^0 g)_i(x) \coloneqq  2k(x) \int_x^r \frac{\ell (y) g_i(y)}{w(y)}\ud y  +   2\ell (x) \int_0^x \frac{k(y) g_i(y)}{w(y)}\ud y,     \end{equation}
for $g\in \mc C,$  $x \in [0,r]$ and $i\in \kad$, 
\item [2. ]$k$ is 
the unique solution of the Sturm--Liouville problem 
\begin{equation}\label{zdzis:3} \tfrac 12 k'' =\lam k , \qquad k(0)=0, k'(0)=1,\end{equation} 
and thus can be calculated explicitly: $k(x) =\frac{\sinh \slam x}{\slam}, x\in [0,r]$; 
\item [3. ]$\ell$ is 
the unique solution of the Sturm--Liouville problem
\begin{equation}\label{zdzis:4}  \tfrac 12 \ell '' =\lam \ell , \qquad \ell (r)=1, \ell' (r)=0, \end{equation} 
and thus can be calculated explicitly: $ \ell (x) = \cosh \slam (r-x), x \in [0,r]$;
\item [4. ]$w \coloneqq  k' \ell  - \ell 'k = (\cosh \slam r)\mathsf 1_{[0,r]}$,  
\item [5. ] $\mc L_\lam \coloneqq \mathsf 1 - \lam \rla^0 \mathsf 1 $, and 
\item [6. ] $ C_{i} (g) \coloneqq (\rla^0 g)_i'(0) = 2 \int_0^r \frac{\ell (y) g_i(y)}{w(y)}\ud y$, that is, 
\begin{equation}
  C_i(g) = \tfrac 2{\cosh \slam r}\int_0^r \cosh \slam (r-y)g_i(y) \ud y  , \qquad i \in \kad, g \in \mc C. \label{ojej1} \end{equation} \end{itemize}

\end{corollary}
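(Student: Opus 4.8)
The plan is to read off both claims from the proof of Proposition~\ref{prop:4}, which already did essentially all of the work. For \eqref{pta:8}, recall that in that proof we exhibited the operator $\rlae$ of \eqref{pta:7}, verified that $\rlae g$ satisfies conditions (a)--(c) of the definition of $\dom{\mathfrak A_\eps}$, and checked the resolvent identity $(\lam - \mathfrak A_\eps)\rlae g = g$ for every $g\in \mc C$. Since $\mathfrak A_\eps$ is a Feller generator, the positive-maximum principle forces $\lam - \mathfrak A_\eps$ to be injective on its domain for $\lam>0$, so the solution of the resolvent equation is unique; hence $\rlae$ coincides with $\rez{\mathfrak A_\eps}$, which is exactly \eqref{pta:8}.

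The special case \eqref{zdzis:1} I would obtain precisely as in the closing lines of the proof of Proposition~\ref{prop:4}: one specializes the preceding construction to the situation $a_\eps=0$ and replaces $p_i$ by $\widetilde p_i$. Under $a_\eps=0$ the operator $\rlae^0$ of \eqref{pta:4} becomes the $\rla^0$ of \eqref{zdzis:2}, the exit law $\mc L_{\lam,\eps}$ becomes $\mc L_\lam = \mathsf 1 - \lam \rla^0 \mathsf 1$, and the functionals $C_{\eps,i}$ of \eqref{pta:6} become the $C_i$ of item~6; substituting $\widetilde p_i$ for $p_i$ in \eqref{pta:7} then yields \eqref{zdzis:1} verbatim. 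This already establishes items~1, 5, and the first equality in item~6.

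It remains to make the auxiliary objects explicit, which is a routine constant-coefficient computation. Solving \eqref{zdzis:3}, the general solution of $\tfrac 12 k''=\lam k$ is a combination of $\cosh \slam x$ and $\sinh \slam x$, and imposing $k(0)=0$, $k'(0)=1$ singles out $k(x)=\slam^{-1}\sinh \slam x$; likewise $\ell(x)=\cosh \slam(r-x)$ manifestly satisfies $\tfrac 12 \ell''=\lam \ell$ together with $\ell(r)=1$, $\ell'(r)=0$, giving items~2 and~3. For item~4 I would argue that, just as in the proof of Proposition~\ref{prop:3} where $w_\eps'=-a_\eps w_\eps$, here $w'=0$, so $w$ is constant; evaluating $w=k'\ell-\ell'k$ at $x=r$ (where $\ell'(r)=0$ and $\ell(r)=1$) gives $w\equiv k'(r)=\cosh \slam r$. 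Equivalently, the hyperbolic addition formula collapses $k'\ell-\ell'k=\cosh\slam x\,\cosh\slam(r-x)+\sinh\slam x\,\sinh\slam(r-x)$ to $\cosh \slam r$. Finally, inserting $w\equiv \cosh \slam r$ and $\ell(y)=\cosh \slam(r-y)$ into the integral formula for $C_i$ produces \eqref{ojej1}.

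Because every ingredient is supplied by Proposition~\ref{prop:4} and the underlying ODEs are elementary, I expect no real obstacle here; the only points demanding a little care are the uniqueness argument underpinning \eqref{pta:8} (that $\lam - \mathfrak A_\eps$ is injective, which is exactly the generator property already proved) and the bookkeeping of the hyperbolic identity that trivializes the Wronskian $w$.
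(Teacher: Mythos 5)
Your proposal is correct and follows essentially the same route as the paper, which simply declares the corollary to be a by-product of the proof of Proposition \ref{prop:4}: identity \eqref{pta:8} comes from the resolvent equation verified there together with uniqueness of its solution, and \eqref{zdzis:1} from specializing to $a_\eps=0$ with $p_i$ replaced by $\widetilde p_i$. The explicit formulas for $k$, $\ell$, $w$ and $C_i$ are the same elementary constant-coefficient computations the paper intends.
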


We can now present the proof of Theorem \ref{thm:glo2}. 
\begin{proof}[Proof of Theorem \ref{thm:glo2}]
Let us first look at the operators $\rlae^0, \lam >0$ featured in \eqref{pta:7}, and defined in \eqref{pta:4}. By assumption \eqref{pta:0}, for each $i\in \kad$, as $\eps \to 0$, $T_{\eps,i}f$ introduced in \eqref{pta:3} converges (in the norm of $C[0,r])$ to $T_if $ defined by 
\begin{equation}\label{pta:9} (T_{i} f)(x) =  \e^{-2\alpha_{i}} x + 2\lam \int_{0}^x \int_0^y  f(z) \ud z\ud y, \qquad x \in [0,r].\end{equation}
As in the proof of Proposition \ref{prop:1} we deduce, therefore, that $k_{\eps,i}$ converge uniformly to the unique solution of the Sturm--Liouville problem 
\begin{equation}\label{zdzis:5} \tfrac 12 k_{i}'' =\lam k_{i}, \qquad k_{i}(0)=0, k'_{i}(0)=\e^{-2\alpha_i},\end{equation} 
and so do their derivatives, even though the latter functions converge merely pointwise. 
Similarly, $\ell_{\eps,i}$ converge uniformly to the unique solution $\ell$ of \eqref{zdzis:4} 
and so do converge (pointwise) their derivatives. 

Hence, as in the proof of Proposition \ref{prop:3}, we deduce that $\rlae^0$ converge strongly to $\rlaz^0 $ defined as follows (comp. \eqref{pta:4})
\begin{equation}\label{pta:10} (\rlaz^0 g)_i(x) \coloneqq  2k_{i}(x) \int_x^r \frac{\ell (y) g_i(y)}{w_{i}(y)}\ud y  +   2\ell (x) \int_0^x \frac{k_{i}(y) g_i(y)}{w_{i}(y)}\ud y    \end{equation}
for $x\in [0,r]$ and $g\in \mc C$, where $w_{i}\coloneqq k_{i}' \ell - \ell 'k_{i}$. It follows that $\mc L_{\lam,\eps}$ converge to 
\begin{equation}
    \label{eq:Llambda}
    \mc L_{\lam} \coloneqq \mathsf 1 - \lam \rlaz^0 \mathsf 1
\end{equation} 
(in the norm of $\mc C$). Since at the same time $ C_{\eps,i} (g)$ of \eqref{pta:6} converge to $C_{0,i}(g) \coloneqq 2 \int_0^r \frac{\ell(y) g_i(y)}{w_{i}(y)}\ud y$ (we use \eqref{atek} here), we conclude that $\grae \rlae g$  $= \rla  g $ where 
\begin{equation} \rla g = \rlaz^0 g +  \frac {\sum_{i\in \kad} p_i C_{0,i} (g)}{\lam \sum_{i\in \kad} p_i C_{0,i} (\mathsf 1)}\mc L_{\lam} \label{pta:11}, \qquad g \in \mc C. \end{equation} 

We are thus left with showing that this $\rla$ coincides with $\rez{\askew}$. To this end, we note first that the solution $k_{0,i}$ to the problem \eqref{zdzis:5} is related to the solution $k_{i}$ to the problem \eqref{zdzis:3} by  $k_{0,i} = \e^{-2\alpha_i}k_i, i \in \kad$, and this implies  $w_{0,i} = \e^{-2\alpha_i}w_i $. It follows that $\rlaz^0$ of \eqref{pta:10} is nothing else but 
$\rla^0$ of \eqref{zdzis:2}, and in particular $\mc L_\lam $ defined by \eqref{eq:Llambda} is the same as $\mc L_\lam$ defined in Corollary \ref{wnos}. By the same token, $C_{0,i} =  \e^{2\alpha_i}C_i, i \in \kad$ and thus \eqref{pta:11} can be rewritten as 
\[ \rla g = \rla^0 g +  \frac {\sum_{i\in \kad} p_i \e^{2\alpha_i} C_{i} (g)}{\lam \sum_{i\in \kad} p_i \e^{2\alpha_i} C_{i} (\mathsf 1)}\mc L_{\lam} \qquad g \in \mc C. \] 
Since, by the definition of $\widetilde p_i$s,  the right-hand side is clearly the same as the right-hand side of \eqref{zdzis:1}, we are done.
\end{proof}

\subsection{The process generated by $\awalsh$ can be constructed by means of excursions of the reflected Brownian motion on \([0,r]\)} \label{rem:andrzeja}
 Formula \eqref{zdzis:1} provides also a hint on how the process related to $\askew$ can be constructed. Notably, as already remarked in Introduction, It\^o and McKean have originally defined the skew Brownian motion by means of excursions of the reflected Wiener process on the line: each excursion of the former is defined as the excursion of the latter but with a random sign, being positive with probability $p$ and negative with probability $1-p$. Walsh constructs his process in a similar manner (see also \cite{barlow}): he takes an excursion of the reflected Brownian motion and puts it on the $i$th edge of the star graph with probability $p_i$.

 We claim that the process related to $\awalsh$ can be constructed analogously: the only difference is that 
instead of excursions of the reflected Brownian motion on $[0,\infty)$ we use those of the reflected Brownian motion on $[0,r]$, the Brownian motion  with two reflecting barriers.

To begin with, let  $X$ be a Feller process on $K_{1,\ka}$ that behaves like a Brownian motion on each edge and is reflected at the outer nodes of the graph. The strong Markov property implies that the resolvent of $X$ satisfies the relation
\begin{equation}\label{4.3:1}
        R^X_\lambda g(x)= R^0_\lambda g(x)+ \E_x \e^{-\lambda \sigma } R^X_\lambda g(0),\qquad g \in \mathcal C, x \in K_{1,\ka},
\end{equation}
 in which $\sigma$ is the random moment when $X$ reaches the center of the graph for the first time, and   
 $R^0_\lambda, \lam >0$ is the resolvent of the process $X^0$ obtained from $X$ by killing it at $\sigma$. Furthermore, it can be checked that $\rla^0, \lam>0$ coincides with the resolvent $\rla^0,\lam >0$ of \eqref{zdzis:2}. Also, the calculation \(\lam R^0_\lambda \mathsf 1(x)=\E_x\int_0^\sigma \lam \e^{-\lambda t} \ud t= 1-\E_x\e^{-\lambda \sigma},\) shows that $\E_x\e^{-\lambda \sigma}$ equals $1-\lambda R^0_\lambda \mathsf 1 (x)$ and in particular coincides with 
        $\mc L_{\lam} (x)$, introduced in \eqref{eq:Llambda}. More intrinsically, \eqref{4.3:1} now reveals that to completely characterize $\rla^X, \lam >0$ all one needs to know is $\rla^X g(0)$ for $g\in \mathcal C $ and $\lam >0$ and the resolvent $\rla^0, \lam >0$ of \eqref{zdzis:2}.

To continue, let $B^{\textrm{refl}}$ be  the reflected  Brownian motion on $[0,r]$ and let $\rla^{\textrm{refl}}, \lam >0$  be its resolvent. The relation $\rla^{\textrm{refl}}, \lam >0$  satisfies is a particular case of \eqref{4.3:1} with $\ka =1$ and $\rla^0, \lam >0$ describing the minimal process in the interval $[0,r]$. Moreover,  
\begin{equation}\label{zdzis:6}
R^{\textrm{refl}}_\lambda g(0)=\tfrac {C (g)}{\lam  C  (\mathsf 1_{[0,r]})}, \qquad g\in C[0,r], \lam >0,
\end{equation}
 where, as a counterpart of \eqref{ojej1}, 
\(C (g)\coloneqq \frac 2{\cosh \slam r}\int_0^r \cosh \slam (r-y) g(y) \ud y\).

With these preparations out of the way, using $B^{\textrm{refl}}$, we can define the process, say, $W$, with values in the star graph, as follows. To begin with, we demand that, when started away from the graph center, $W$ behaves like $B^{\textrm{refl}}$ on the edge where it started, up to the moment of first reaching the graph center. Also, to describe its further behavior, 
we first  enumerate  excursions of  $B^{\textrm{refl}}$ in a measurable way (to recall, an excursion is a part of the trajectory of $B^{\textrm{refl}}$ that is positive, and starts and ends at $0$, with no zeros in the middle), and then place an excursion (independently of other excursions and of $B^{\textrm{refl}}$) on the $i$th edge of the graph with probability $\widetilde p_i$ of  \eqref{pta:1}. It is thus clear that  $W$ 
behaves like a Brownian motion on each edge and is reflected at the outer nodes of the graph, and so, in particular, its resolvent $\rla^W, \lam >0 $ satisfies \eqref{4.3:1}. Moreover,  
\[ \rla^W g(0) = \sui \widetilde p_i R^{\textrm{refl}}_\lambda g_i(0), \qquad g \in \mathcal C, \lam >0. \]
Finally, since (a) the constant $C_i(\mathsf 1)$, defined in  \eqref{ojej1},  does not in fact depend on $i\in \kad$ and coincides with $C(\mathsf 1_{[0,r]})$ featured in \eqref{zdzis:6},  and (b) the $\widetilde p_i$s add up to $1$, the factor of $\mc L_\lam$ featured in \eqref{zdzis:1} can be rewritten as  
$ \frac {\sum_{i\in \kad} \widetilde p_i C_{i} (g)}{\lam C (\mathsf 1_{[0,r]})}$ and thus equals $\rla^W g(0)$. It follows that $\rla^W, \lam >0$ coincides with $\rez{\askew}, \lam >0$, completing our proof.

\section{A version on the infinite graph}\label{sec:infinite}
\newcommand{\kiki}{K_{1,\ka}^\infty}
\newcommand{\kikr}{K_{1,\ka}^r}
\newcommand{\kikro}{K_{1,\ka}^\rho}
\newcommand{\kikde}{K_{1,\ka}^\delta}
Our final goal is to establish a counterpart of Theorem \ref{thm:glo2} on the infinite star graph 
\[ \kiki \]
with $\ka \ge 2$ edges, that is, on the set obtained from $\ka$ copies of the compactified half-line $[0,\infty]$ by lumping all zeros into one point.  Unfortunately, the methods developed in Sections \ref{sec:pot} and \ref{sec:gene} cannot be translated \emph{verbatim}  to this setting, mainly because there is no handy equivalent of the Bielecki-type norm in the space \[ C[0,\infty] \] of continuous functions on $[0,\infty)$ that have finite limits at infinity.

We will show, however, that the convergence result we are interested in can be proved with the help of Theorem \ref{thm:glo2}, treated as a key lemma, and some simple manipulations on measures. To this end, rather than beginning, as in Section \ref{sec:gene}, with the description of generators and semigroups, we start with the construction of processes that uses those underlying Theorem \ref{thm:glo2} as building blocks. A direct proof of the convergence of the related resolvents is given later, in Section \ref{sec:corbis}. 

\subsection{Construction of approximating processes}\label{sec:ukcja}
Our point of departure is a family $a_{\eps,i}, \eps>0, i \in \kad$ of continuous functions on $[0,\infty)$ such that 
\begin{equation} \label{coa:0} M\coloneqq \max_{i\in \kad} \sup_{\eps >0 } \int_0^\infty |a_{\eps,i} (u)| \ud u  < \infty \end{equation}
and, for certain reals $\alpha_i$,  
\begin{equation}\label{coa:1} \grae \int_y^z a_{\eps,i} (u) \ud u = \alpha_i [y=0], \qquad 0\le y < z, i \in \kad;\end{equation}
recall from \eqref{pta:0} that $\kad \coloneqq \{1,\dots, \ka\}$. As already stated, such a family can be constructed with the help of $\ka$ continuous, integrable functions $a_i, i \in \kad $ on $[0,\infty)$ by  $a_{\eps,i} (x) = \eps^{-1}a_i (\eps^{-1} x), x \ge 0$, and then $\alpha_i = \int_0^\infty a_i(u)\ud u, i \in \kad$.  We are also given positive parameters $p_i,i \in \kad$ with the property that $\sui p_i =1$.

These objects allow constructing the following two types of  processes, of which our main processes of interest are build. 
\begin{itemize} 
\item [1. ] It is clear that \eqref{coa:0} and \eqref{coa:1} imply that, for any $r>0$, the restrictions of $a_{\eps,i}$s to $[0,r]$ satisfy conditions \eqref{atek} and \eqref{pta:0}. Hence, Theorem \ref{thm:glo2} applies, and in particular there are Feller processes generated by the operators $\mathfrak A_\eps$ of this theorem.  These processes, denoted $Y_{\eps,r}$ in what follows, have values in the finite star graph \(\kikr\) with $\ka$ edges of length $r$ (see Figure \ref{slg}) --- the superscript in $\kikr$  is to stress dependence on $r$. 

\item [2. ] Let $C[-\infty,\infty]$ be the space of continuous functions $f\colon \R \to \R$ that have finite limits at $\pm \infty$. Given 
$ i \in \kad$ and  $\eps >0$, we 
let $a_{\eps,i}^{e}$ be the even extension of $a_{\eps, i}$ to the entire real line. With the help of this extended function, we  define the domain of the operator $\mathfrak G_{\eps,i}$  in  $C[-\infty,\infty]$ to be composed of twice continuously differentiable $f$ such that $\frac 12 f'' + a_{\eps,i}^e f'$ belongs to this space, and for such $f$ agree that 
\[ \mathfrak G_{\eps,i} f = \tfrac 12 f'' + a_{\eps,i}^e f'.\]
To take a closer look at this operator, we recall that by assumption the limits $\lim_{x\to \infty}\e^{\pm \mathfrak a_{\eps,i}(x)}$, where $\mathfrak a_{\eps,i}(x) \coloneqq 2 \int_0^x a_{\eps,i} (y) \ud y, x \ge 0$,  exist, are finite and differ from $0$. Hence, 
\[ \int_0^\infty \e^{\pm \mathfrak a_{\eps,i}(x)}\int_0^x \e^{\mp \mathfrak a_{\eps,i}(y)}\ud y \ud x =\infty . \]
In the language of the Feller boundary classification (see the original \cite{fellera4}*{p. 487} or e.g. \cite{ethier}*{p. 366}, \cite{mandl}*{pp. 24--25}) this means that $\infty$ is a natural boundary for the related diffusion process, and by symmetry the same is true for $-\infty$. Theorem 1 on p. 38 in \cite{mandl} asserts thus that $\mathfrak G_{\eps,i}$ is a Feller generator, and Remark 2 on the same page there supplies the information that the related Feller process is honest. We denote this process  $Z_{\eps,i}$; it has values in $\R$.  
\end{itemize}

With processes $Y_{\eps,r}, \eps >0$ (for a fixed $r>0$) and $Z_{\eps,i}, \eps >0, i \in \kad$ under our belt, we construct processes  $X_\eps, \eps >0$ that have values in the infinite graph $\kiki$ via the procedure explained below; as we shall see later, the final result does not depend on the choice of $r$.

\begin{itemize}
\item [($\star$)] If started at one of the points at infinity, that is, one of the outer nodes of $\kiki$, $X_\eps$ stays there for ever. 
\item [($\star\star$)] If started at an $i$th edge at a distance $\ge r$ from the graph center, $X_\eps$ evolves according to the rules of $Z_{\eps,i}$ up to the moment when $Z_{\eps, i}$ reaches $0$ for the first time; from that moment on $X_\eps$ forgets its past and  starts behaving as in rule ($\star\star\star$) below. Of course, in this description we identify points of the half-line $[0,\infty)$ (contained in the  state-space of $Z_{\eps,i}$)  with the corresponding points of the $i$th edge. 
\item [($\star\star\star$)]  If started  at a distance $< r$ from the graph center, $X_\eps$ behaves like $Y_{\eps,r}$ up to the the moment when its distance from the graph center reaches the level $r$ for the first time.  From that moment on $X_\eps$  forgets its past and starts to behave as in point ($\star\star$) above. Here, we see the finite graph $\kikr$ as a subset of $\kiki$:
\[\kikr \subset \kiki. \]
\end{itemize}

A few remarks are here in order. Firstly, constructions of this type are well-known in  stochastic analysis. For example, they are found useful in building 
(a) global solutions to stochastic differential equations with locally Lipschitz coefficients \cite{gandssde}*{Section 6}, and (b) diffusions on manifolds as concatenations of diffusions defined on local charts \cite{stroockvaradhan_book}*{Section 6.6}.

Secondly, the construction works for two reasons: both kinds of processes involved possess the strong Markov property (which allows restarting them at Markov times) and they are locally consistent. The elaborate on the second reason stated succinctly above: a process which happens to be at a distance larger than $0$ but smaller than $r$ from the graph center does not need to know whether it has reached this position and is to proceed by following the rules of $Y_{\eps, r}$ or the rules of $Z_{\eps,i}$ (for some $i$); in this region both rules are the same.

Thirdly, to make sure the so-constructed process is honest we need to rule out the possibility of its explosion.   But explosion could happen in two and only two scenarios. In the first of these, consecutive times of alternating between the graph center and points of distance $r$ from it, and back, could be so small that they form a converging series. In the second, the process might, using the rules of $Z_{\eps,i}$ for some $i$, reach infinity in finite time.

Fortunately, we need not worry of the first scenario, because the time needed to go from the graph center to one of the points that lie at the distance $r$ from the center is a positive random variable, and by the strong Markov property the terms in the series are independent; it follows that the sum of the series is infinite almost surely. Neither do we need to worry of the second scenario; since $\infty$ is a natural boundary for $Z_{\eps,i}$, with probability one it cannot be reached from the regular state-space in finite time.  

Fourthly, and last, we readily check that the construction in fact does not depend on $r$; the final process does not depend on whether we have used $r$ large or small.

\newcommand{\ksy}{{\eps,i}}
\newcommand{\cxn}{\left (x_n\right)_{n\ge 1}}
\subsection{$X_\eps$s as Feller processes}
The following lemma is a key to understanding processes $Z_{\eps,i}$ and $X_\eps$. 
\begin{lemma}\label{tamel:1} For $\eps, r >0$ and $i\in \kad$, let $\sigma_{\eps,i,r}$ be the time needed for $Z_{\eps,i}$ to reach $\pm r$:
\[ \sigma_{\eps,i,r} \coloneqq \inf \{s \ge 0\colon |Z_{\eps,i}(s) | = r\}. \] 
Then, for any $\rho >0$ and $t>0$,
\[ \lim_{r \to \infty} \sup_{\eps>0, i\in \kad} \sup_{|x|\le \rho} \mathsf P_x (\sigma_{\eps,i,r} \le t) =0. \]
 \end{lemma}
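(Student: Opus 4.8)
=== PROOF PROPOSAL ===

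\textbf{The plan is to} bound the probability that $Z_{\eps,i}$ escapes the interval $[-r,r]$ before time $t$ by controlling its scale function uniformly in $\eps$ and $i$, and then invoke the natural-boundary character of $\pm\infty$ together with a tightness/uniformity argument. The essential feature I want to exploit is that assumptions \eqref{coa:0} and \eqref{coa:1} guarantee that the drifts $a_{\eps,i}^e$ are integrable with a uniform bound $M$, so the scale functions of all the processes $Z_{\eps,i}$ are comparable to the identity up to a multiplicative constant depending only on $M$. Concretely, the scale density of $Z_{\eps,i}$ is $\e^{-\mathfrak a_{\eps,i}(x)}$ (with the even extension), and since $|\mathfrak a_{\eps,i}(x)| = |2\int_0^x a_{\eps,i}|\le 2M$ for all $x$, $\eps$, $i$, the scale function $s_{\eps,i}$ satisfies $\e^{-2M}|x-y|\le |s_{\eps,i}(x)-s_{\eps,i}(y)|\le \e^{2M}|x-y|$. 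This is the uniform comparison that makes everything go through.

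\textbf{First I would} set up the comparison explicitly. Fix $\rho>0$ and take $r>\rho$. For $|x|\le\rho$ the hitting time $\sigma_{\eps,i,r}$ is the exit time of $Z_{\eps,i}$ from $(-r,r)$. Writing $Z_{\eps,i}$ in its natural scale, $s_{\eps,i}(Z_{\eps,i})$ is a time-changed Brownian motion (a local martingale whose quadratic variation is governed by the speed measure), and I would estimate $\mathsf P_x(\sigma_{\eps,i,r}\le t)$ from above. A clean route is a Chebyshev/Lyapunov bound: I would look for a function $\phi_r$ that is superharmonic-like for the generator $\mathfrak G_{\eps,i}$ outside a neighborhood of $0$, or more simply estimate $\E_x \sigma_{\eps,i,r}$ from below and apply Markov's inequality, $\mathsf P_x(\sigma_{\eps,i,r}\le t)\le t/\E_x\sigma_{\eps,i,r}$ once I show $\inf_{\eps,i,|x|\le\rho}\E_x\sigma_{\eps,i,r}\to\infty$ as $r\to\infty$. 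The expected exit time has the classical Feller representation as a double integral against scale and speed measures, and the uniform bound $|\mathfrak a_{\eps,i}|\le 2M$ converts this into an estimate bounded below by $c(M)\,r$ (the lower bound for a Brownian exit time from an interval of scale-length $\asymp r$, started at scale-distance $\le \e^{2M}\rho$ from the nearer endpoint), uniformly in $\eps,i$ and in $|x|\le\rho$. Taking $r\to\infty$ then forces the supremum over $\eps,i,|x|\le\rho$ of $t/\E_x\sigma_{\eps,i,r}$ to $0$.

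\textbf{The hard part will be} making the expected-exit-time lower bound genuinely uniform across all three parameters at once, since the speed measures $\mud m_{\eps,i}$ of the $Z_{\eps,i}$ also depend on $\eps$ and $i$ through $\e^{\mathfrak a_{\eps,i}}$, and a priori these could concentrate mass near $0$ in a way that shortens the exit time. I expect this to be manageable because the same uniform bound $|\mathfrak a_{\eps,i}|\le 2M$ that controls the scale also controls the speed density $\e^{\mathfrak a_{\eps,i}(x)}$ from above and below by $\e^{\pm 2M}$, so $\mud m_{\eps,i}$ is comparable to Lebesgue measure uniformly; hence the Feller double-integral for $\E_x\sigma_{\eps,i,r}$ is squeezed between constant multiples of the corresponding quantity for standard Brownian motion on an interval of length $\asymp r$, which grows linearly in $r$. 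Once this two-sided comparison is in hand, the linear growth of the lower bound in $r$ yields the claim, uniformly in $\eps$, $i$, and $|x|\le\rho$, which is exactly what Lemma \ref{tamel:1} asserts. As a sanity check, this is precisely the quantitative counterpart of the qualitative statement already noted in the construction, namely that $\infty$ is a natural boundary for each $Z_{\eps,i}$ and so cannot be reached in finite time.
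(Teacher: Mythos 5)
Your setup---the uniform two-sided bounds $\e^{-2M}\le \e^{\pm\mathfrak a_{\eps,i}(x)}\le \e^{2M}$ on the scale and speed densities, and the observation that $s_{\eps,i}(Z_{\eps,i})$ is a time-changed Brownian motion---is exactly the right starting point (and is what the paper uses). The gap is the step that converts this into a probability estimate: the inequality $\mathsf P_x(\sigma_{\eps,i,r}\le t)\le t/\mathsf E_x\sigma_{\eps,i,r}$ is not Markov's inequality and is false in general. Markov controls the \emph{upper} tail, $\mathsf P(\sigma\ge a)\le \mathsf E\sigma/a$; it gives no upper bound on the lower tail $\mathsf P(\sigma\le t)$. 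A nonnegative random variable can have arbitrarily large mean and still be $\le t$ with probability $\tfrac 12$ (take $\sigma=0$ or $\sigma=2n$ with equal probability), so a lower bound of order $r$ on $\mathsf E_x\sigma_{\eps,i,r}$---however uniform in $\eps$, $i$ and $x$---cannot by itself force $\sup_{\eps,i,|x|\le\rho}\mathsf P_x(\sigma_{\eps,i,r}\le t)\to 0$. The Feller double-integral computation of the mean exit time is therefore the wrong tool for this lemma.

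The repair is to use the pathwise representation you already wrote down, which is how the paper argues. Since $Z_{\eps,i}$ has the same law as $s_{\eps,i}^{-1}\bigl(B(A_{\eps,i}(\cdot))+s_{\eps,i}(x)\bigr)$ with $A_{\eps,i}(t)\le 2\e^{2M}t$, $|s_{\eps,i}(x)|\le \e^{2M}|x|$ and $|s_{\eps,i}(y)|\ge \e^{-2M}|y|$, the event $\{\sigma_{\eps,i,r}\le t\}$ forces the Brownian motion, started at a point of modulus at most $\e^{2M}\rho$, to leave $(-\e^{-2M}r,\e^{-2M}r)$ by time $2\e^{2M}t$. Hence
\[ \sup_{\eps>0,\,i\in\kad}\ \sup_{|x|\le\rho}\mathsf P_x(\sigma_{\eps,i,r}\le t)\ \le\ \sup_{|u|\le \e^{2M}\rho}\mathsf P\Bigl(\max_{0\le s\le 2\e^{2M}t}|u+B(s)|\ge \e^{-2M}r\Bigr)\ \longrightarrow\ 0 \quad (r\to\infty),\]
which is the desired conclusion. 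If you prefer a route through expectations, replace Markov by a Chernoff-type bound $\mathsf P_x(\sigma\le t)\le \e^{\lam t}\,\mathsf E_x\e^{-\lam\sigma}$ and estimate the Laplace transform of the exit time (for Brownian motion it equals $\cosh(\slam u)/\cosh(\slam R)$ with $R=\e^{-2M}r$, which tends to $0$); that is a correct quantitative substitute for the mean-based step.
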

\begin{proof} Let, as before, $a_\ksy^e $ be the even extension of $a_\ksy$. The functions $s_\ksy$ and $A_\ksy$ given by 
\begin{align*} s_\ksy (x) &\coloneqq \int_0^x \e^{-\mathfrak a_\ksy (y)} \ud y, \quad  x \in \R, \text { and}\\
 A_\ksy (t) & \coloneqq 2 \int_0^t \e^{\mathfrak a_\ksy \circ s_\ksy \circ Z_\ksy (s)} \ud s, \quad t \ge 0,\end{align*}
where $\mathfrak a_\ksy (x) \coloneqq 2 \int_0^x a_\ksy ^e (y) \ud y, x \in \R$, are the scale function and a strictly increasing continuous additive functional for $Y_\ksy$, respectively. It is well-known that the formula 
\[ B_\ksy (t) \coloneqq s_\ksy \circ Z_\ksy \circ A_\ksy^{-1} (t), \qquad t \ge 0\]
defines then a Brownian motion starting at $s_\ksy (x)$ if $Z_\ksy$ starts at $x$. (The general statement can be found e.g. in \cite{kallenbergnew}*{Thm. 23.9}, one can also combine Theorems 46.12 and 47.1 in \cite{rogers2}. However, in our simple case the result is a reflection of the fact, visible from the following formula for the generator $\mathfrak G_\ksy $ of $Z_\ksy$:  $\mathfrak G_{\eps,i} f (x) = \frac 12 \e^{-\mathfrak a_\ksy(x)} \frac{\mud}{\mud x} (\e^{\mathfrak a_\ksy (x)} \frac {\mud}{\mud x}f(x))$, that by transforming time (this is the role of $A_\ksy$) and space (the role of $s_\ksy$), $Z_\ksy$ can be seen as a standard Brownian motion.)  Hence, $Z_\ksy \overset{d}  = \widetilde Z_\ksy$ (in distribution) where 
\begin{equation}\label{pox:1} \widetilde Z_\ksy (t) \coloneqq s_\ksy^{-1} ( B(A_\ksy (t)) + s_\ksy (x)), \qquad t \ge 0 \end{equation}
and $B$ is a standard Brownian motion starting at $0$.

Moreover, since $-2M\le \mathfrak a_\ksy (x) \le 2 M$ for $x\in \R$ (see \eqref{coa:0}), we have $2\e^{-2M} t \le A_\ksy (t) \le 2 \e^{2M} t, t \ge 0$. By the same token, $\e^{-2M} x \le s_\ksy (x) \le \e^{2M} x $ for $x \ge 0$ and $\e^{2M} x \le s_\ksy (x) \le \e^{-2M} x$ for $x\le 0$, and this renders $\e^{-2M}x \le s_\ksy^{-1} (x) \le \e^{2M} x $ for $x\ge 0$ and $\e^{2M} x \le s_\ksy^{-1} (x) \le \e^{-2M}x $ for $x\le 0$. We conclude, thus, that all the functions $s_\ksy, s^{-1}_\ksy$ and $A_\ksy$ featured in \eqref{pox:1} have a common estimate of growth. 

Therefore, by the formula just mentioned, our thesis reduces to the statement that 
\( \lim_{r \to \infty}  \sup_{|x|\le \rho} \mathsf P(\tau_{x,r} \le t) =0 \),
where $\tau_{x,r}$ is the time needed for $x+ B(t)$ to reach $\pm r$. This, however, is clear: for any $t >0$, 
\( \lim_{r \to \infty} \mathsf P (\max_{0\le s\le t}|x+  B(s)| < r) =1\). \end{proof}

We are now ready to exhibit the Fellerian nature of $X_\eps$. 

\begin{proposition} \label{prop:pox1} Each $X_\eps, \eps >0$ is a Feller process. \end{proposition}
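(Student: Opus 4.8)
The plan is to show that the transition operators $R_\lam^{X_\eps} f(x) \coloneqq \E_x \int_0^\infty \e^{-\lam s} f(X_\eps(s))\ud s$, $\lam >0$, $f \in C(\kiki)$, form the resolvent of a Feller semigroup on the space $C(\kiki)$ of continuous functions on the compact graph $\kiki$; by the Hille--Yosida theorem for Feller semigroups \cites{bass,kniga,ethier,kallenbergnew} it suffices to check that $R_\lam^{X_\eps}$ is a positive contraction mapping $C(\kiki)$ into itself, that the resolvent identity holds, and that $\lam R_\lam^{X_\eps} f \to f$ as $\lam \to \infty$. Positivity, the contraction property, the resolvent identity and honesty ($\lam R_\lam^{X_\eps}\mathsf 1 = \mathsf 1$) are immediate once we know that $X_\eps$ is a well-defined, honest, strong Markov process with continuous paths; this is exactly what the no-explosion discussion preceding the proposition secures (the first, oscillatory, scenario being ruled out by the independence and positivity of the successive passage times, the second by $\infty$ being a natural boundary for each $Z_{\eps,i}$). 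The substance of the proof is thus the \emph{Feller regularity} $R_\lam^{X_\eps} f \in C(\kiki)$.

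To establish it I would exploit the strong Markov property at the switching instants of the construction. Write $\tau_r$ for the first time the distance of $X_\eps$ from the center reaches $r$; on $\{s < \tau_r\}$ the process started in the core $\kikr$ agrees with $Y_{\eps,r}$, so
\[ R_\lam^{X_\eps} f(x) = \E_x \int_0^{\tau_r} \e^{-\lam s} f(X_\eps(s))\ud s + \sum_{i\in\kad} \E_x\big[\e^{-\lam \tau_r}; X_\eps(\tau_r)=r_i\big]\, R_\lam^{X_\eps} f(r_i), \]
where $r_i$ denotes the point at distance $r$ on the $i$th edge. The first summand is the resolvent of the core process killed at level $r$, which maps $C(\kikr)$ into functions continuous up to and including the center because $Y_{\eps,r}$ is Feller (Theorem \ref{thm:glo2}); the coefficients $\E_x[\e^{-\lam\tau_r}; X_\eps(\tau_r)=r_i]$ are $\lam$-harmonic for that killed process and hence likewise continuous in $x$. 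No discontinuity arises at the pasting level $r$: by the \emph{local consistency} noted in the construction the dynamics of $Y_{\eps,r}$ and of $Z_{\eps,i}$ coincide on the overlap region, so the two descriptions agree there. A symmetric decomposition at the first passage to $0$ handles continuity at interior points lying at distance $\ge r$ from the center, using the Feller property of $Z_{\eps,i}$.

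The delicate point is continuity at the outer nodes, the points $\infty_i$ at infinity, which are traps. Started near $\infty_i$, $X_\eps$ follows $Z_{\eps,i}$ until the latter reaches $0$; since $\infty$ is a natural boundary for $Z_{\eps,i}$ and its semigroup is Feller on the compactified line $C[-\infty,\infty]$, a particle started far out stays near $\infty_i$ throughout $[0,t]$ with overwhelming probability, whence $R_\lam^{X_\eps} f(x) \to \lam^{-1} f(\infty_i) = R_\lam^{X_\eps} f(\infty_i)$ as $x \to \infty_i$. The uniform no-escape estimate furnished by Lemma \ref{tamel:1} (more precisely, the reduction of $Z_{\eps,i}$ to a time-changed Brownian motion used in its proof) is what quantifies this and, at the same time, underpins honesty. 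Once $R_\lam^{X_\eps} f \in C(\kiki)$ is known, the limit $\lam R_\lam^{X_\eps} f \to f$ is immediate from the continuity of the paths of $X_\eps$, and the Feller property follows. I expect the two genuinely nontrivial points to be the continuous matching at the interface $\{\mathrm{distance}=r\}$ (handled by local consistency) and the control at infinity (handled by the Feller property of $Z_{\eps,i}$ together with Lemma \ref{tamel:1}); the behavior at the center is inherited directly from Theorem \ref{thm:glo2}.
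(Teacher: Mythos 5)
Your proposal is correct in substance and rests on the same two pillars as the paper's proof --- the Feller property of $Y_{\eps,r}$ supplied by Theorem \ref{thm:glo2} for the behavior near the graph center, and the time-changed-Brownian-motion estimate behind Lemma \ref{tamel:1} for the control at the outer nodes --- but it reaches the conclusion by a different technical route. You work with the resolvent and an exact first-passage (renewal-type) decomposition at $\tau_r$, then invoke the Hille--Yosida theorem; the paper instead fixes $t$ and shows directly that $x\mapsto \mathsf E_x f(X_\eps(t))$ is continuous, by arguing that for $x$ in $\kikro$ the processes $X_\eps$ and $Y_{\eps,r}$ coincide up to time $t$ except on an event of probability at most $\epsilon/(6\|f\|)$ (Lemma \ref{tamel:1}), so continuity is inherited from $Y_{\eps,r}$ up to an error of order $\epsilon$; the outer nodes are handled by the same coupling idea. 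The paper's ``agree with high probability'' argument uses $Y_{\eps,r}$ and $Z_{\eps,i}$ strictly as black boxes, whereas your decomposition buys an explicit formula for $R_\lam^{X_\eps}$ (in the spirit of what Section \ref{sec:corbis} later computes) at the cost of two assertions that need more than you write: (1) the continuity across the inner vertex of the killed resolvent $\E_x\int_0^{\tau_r}\e^{-\lam s}f(X_\eps(s))\ud s$ and of the exit functionals $\E_x[\e^{-\lam\tau_r};X_\eps(\tau_r)=r_i]$ is not literally an instance of the Feller property of $Y_{\eps,r}$ --- it requires either the regularity of the vertex for the diffusion or the Sturm--Liouville representation of these $\lam$-harmonic functions (the analogue of the exit law $\mc L_{\lam,\eps}$, but for the outer rather than the inner boundary); and (2) the Hille--Yosida route additionally requires identifying $R_\lam^{X_\eps},\lam>0$ as the resolvent of a densely defined operator, which is standard for an honest strong Markov process with continuous paths once $R_\lam^{X_\eps}f\in C(\kiki)$ is known, but should be said. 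Neither point is a genuine obstruction; both are fillable with standard diffusion theory, and your treatment of the outer nodes and of honesty matches the paper's.
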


\begin{proof}

Let $\eps>0$ and $t\ge0$ be fixed. We need to show first of all that $g$ given by  $g(x) \coloneqq \mathsf E_x f(X_\eps (t)), x \in \kiki$ belongs to  $C(\kiki)$ whenever $f$ does; $C(\kiki)$ is the space of continuous functions on $\kiki$.  To this end, we fix a nonzero $f \in C(\kiki)$ (for $f=0$ the statement is obvious) and a sequence $\cxn$ of elements of $\kiki$ such that $x_\infty \coloneqq \gra x_n$ exists. 

\emph{Case 1. Finite $x_\infty$.}   

If $x_\infty$ is not one of the outer nodes of $\kiki$, there is a $\rho>0$ such that all $x_n$s are members of $\kikro$. Also, given $\epsilon >0$, one can find an $r >0$ such that, by Lemma \ref{tamel:1}, $\max_{i \in \kad} \mathsf P_y (\sigma_{\eps, i, r} \le t) < \frac {\epsilon}{6\|f\|}$ for any real $y$ with $|y|\le \rho$.  

Now, how could $X_\eps$ started in $\kikro$ differ from $Y_{\eps,r}$? By definition they are the same up to (but not including) the moment when $Y_{\eps,r}$ reaches one of the outer nodes of $\kikr$ for the first time. However, $Y_{\eps,r}$ cannot reach one of these nodes unless $Z_\ksy$, for some $i\in \kad$, goes from the level of $\rho$ to the level of $r$, and the probability of this event has been chosen above to be small. It follows that for $x\in \kikro$, $g(x)$ and $\mathsf E_x f(Y_{\eps,r}(t))1_{\{\sigma_{\eps,i,r}>t\}}$ differ by at most $\frac \epsilon 6$ and by the same token so do  
$\mathsf E_x f(Y_{\eps,r}(t))1_{\{\sigma_{\eps,i,r}>t\}}$ and $\mathsf E_x f(Y_{\eps,r} (t))$. Hence, 
\begin{equation} |g(x) - \mathsf E_x f(Y_{\eps,r}(t))| < \tfrac \epsilon 3, \qquad x \in \kikro, \end{equation}
and, as a consequence, 
\[ |g(x_\infty) - g(x_n)| < \tfrac {2 \epsilon}3 +   |\mathsf E_{x_\infty} f(Y_{\eps,r} (t))-\mathsf E_{x_n} f(Y_{\eps,r}(t))|, \qquad n \ge 1.  \]
Since, by Theorem \ref{thm:glo2}, $Y_{\eps,r}$ is a Feller process in $\kikr$, letting $n\to \infty$ above yields $\limsup_{n\to \infty} |g(x_\infty) - g(x_n)|\le  \frac {2\epsilon}3 < \epsilon$. This shows $\gra g(x_n) = g(x_\infty)$, $\epsilon >0$ being arbitrary.

\emph{Case 2. Infinite $x_\infty$.}  

If $x_\infty$ is one of the outer edges of $\kiki$, say, the $i$th, then without loss of generality we may assume that all $x_n$s lie at the $i$th edge, away from the graph center. Hence, by definition, $X_\eps$ started at these points behaves like $Z_\ksy$ up to, but not including, the moment  
when $Z_\ksy$ reaches $0$ for the first time. 
 Also, arguing as in the proof of Lemma \ref{tamel:1}, that is, using representation  \eqref{pox:1}, we can show that, for any $\rho >0$, 
\[ \lim_{y\to \infty} \mathsf P_y [\mc B_\rho]=1,\] 
where $\mc B_\rho$ is the set where $\inf_{s\in [0,t]} Z_\ksy (s) \ge \rho$. The above relation implies that $X_\eps$ started sufficiently far away from the center is indistinguishable from $Z_\ksy$ with probability close to $1$.

So prepared, given $\epsilon>0$, let us choose $\rho$ so large that \begin{equation} \label{pox:2} |f(x) - f(x_\infty)|< \tfrac \epsilon 3, \qquad x \in V_\rho,\end{equation} 
where $V_\rho$ is the set of points of the $i$th edge of $\kiki$ that lie at a distance at least $\rho$ from the graph center.  Since $\gra x_n = x_\infty, $ the distance, say, $y_n$, of $x_n$ from the graph center tends to infinity. Thus, for sufficiently large $n$ we have, as established above, 
\[  \mathsf P_{y_n} (\mathcal B_\rho ) > 1 - \tfrac \epsilon{3\|f\|}. \] 
As a result, the difference between $g(x_n)$ and  $\mathsf E_{y_n} f^\sharp (Z_\ksy (t)) 1_{\mc B_\rho} $ is less than $\frac \epsilon 3$; $f^\sharp$ featured here is the restriction of $f$ to the $i$th edge of $\kiki$, as  identified with a function on $[0,\infty]$.  Since   $\mathsf E_{y_n} f^\sharp (Z_\ksy (t)) 1_{\mc B_\rho} $, in turn, differs from $f(x_\infty)\mathsf P_{y_n}(\mathcal B_\rho)$, because of  \eqref{pox:2}, by less than $\frac \epsilon 3 \mathsf P_{y_n} (\mathcal B_\rho)\le \frac \epsilon 3$, we see that 
\[ |g(x_n) - f(x_\infty)| \le \tfrac {2\epsilon}3 + |f(x_\infty)| (1 - \mathsf P_{y_n} (\mc B_\rho))< \epsilon.\]
whenever $x_n $ belongs to $V_\rho$, that is, for all sufficiently large $n$. Since $\epsilon$ is arbitrary, this shows that 
$\gra g(x_n) = f(x_\infty) = g(x_\infty)$, as desired. This completes the proof of the fact that $g$ belongs to $C(\kiki)$. 

We are thus left with showing that, for any $f\in C(\kiki)$ and $x\in \kiki$,  $\lim_{t\to 0+} \mathsf E_x f(X_\eps (t)) = f(x)$; we recall that according to the definition of Feller semigroup, we should in fact prove that this convergence is uniform with respect to $x\in \kiki$, but the well-know result says that in this case pointwise convergence implies uniform convergence --- see e.g.  \cite{kallenbergnew}*{p. 369}. However, condition   $\lim_{t\to 0+} \mathsf E_x f(X_\eps (t)) = f(x)$ is clear by the Lebesgue dominated convergence theorem, because $X_\eps$ has continuous paths;  we are done. \end{proof}

\subsection{Generators of $X_\eps$}\label{sec:gox}

It is our next goal to characterize the generators of the processes $X_\eps$. To this end, it will be convenient to think of the space $C(\kiki)$ as a subspace of the Cartesian product $(C[0,\infty])^\ka$ of $\ka$ copies of the space $C[0,\infty]$ introduced at the beginning of this section. In other words, each $f\in C(\kiki)$ will be seen as a vector $\seq{f_i}$ of elements of $C[0,\infty]$ such that $f_i(0)$ does not depend on $i\in \kad$.

This perspective on $C(\kiki)$ allows us to introduce operators $\mathfrak A_\eps$, later to be shown to be generators of $X_\eps$, as follows. The domain $\dom{\mathfrak A_\eps}$ is composed of elements $f$ of $\ceika$ such that 
\begin{itemize} 
\item [(a) ] each coordinate $f_i\colon[0,\infty)$ of $f$ is twice continuously differentiable in $[0,\infty)$ (with one-sided derivatives at $x=0$), the limit $\lim_{x\to \infty} [\frac 12 f_i''(x)+a_\ksy (x) f'_i(x)]$ exists and is finite, 
\item [(b) ] $\sum_{i=1}^\ka p_if_i'(0)=0$,
\item [(c) ] $\frac 12 f_i''(0) + a_{\eps,i} (0) f_i'(0) $ does not depend on $i\in \kad$.
\end{itemize}
Also, for  $f\in \dom{\mathfrak A_\eps}$ we agree that 
\[ \mathfrak A_\eps f = \seq{\tfrac 12 f_i'' + a_{\eps,i} f_i'}. \]

As a preparation for the proof of Proposition \ref{prop:pox2} (presented further down), we recall the following facts established  in \cite{mandl}*{pp. 25--33}. In the work cited they were proved in greater generality; we present them in a way that pertains directly to our scenario.

In the following list $\eps>0$ and $i\in \kad$ are fixed and so is $\lam >0$; the latter, however, is suppressed notationally.  
\begin{itemize} 
\item [1.] There is a unique continuous function $j_\ksy \colon[0,\infty)\to \R$ such that \[ j_\ksy(x)= 1 + 2\lam \int_0^x \e^{-\mathfrak a_\ksy (y)}\int_0^y \e^{\mathfrak a_\ksy (z)} j_\ksy (z) \ud z \ud y , \qquad x \ge 0,\] where, as before, $\mathfrak a_\ksy (x) \coloneqq 2 \int_0^x a_\ksy (y) \ud y, x \ge 0$. Moreover, $j_\ksy (x) \ge \cosh \omega_1 x$ for $x \ge 0$, where $\omega_1 \coloneqq \sqrt {2\lam \e^{-2M}}$. 
\item [2. ] Since  $j_\ksy$ grows   exponentially,  the integral $\int_0^\infty \e^{-\mathfrak a_\ksy (y)} \frac 1{[j_\ksy (y)]^2} \ud y$ is finite, and thus it makes sense to define 
\begin{align*} k_\ksy (x) &\coloneqq j_\ksy (x) \int_0^x \e^{-\mathfrak a_\ksy (y)} \tfrac 1{[j_\ksy (y)]^2} \ud y , \\
 \ell_\ksy (x) &\coloneqq j_\ksy (x) \int_x^\infty \e^{-\mathfrak a_\ksy (y)} \tfrac 1{[j_\ksy (y)]^2} \ud y , \qquad x \ge 0. 
\end{align*}
Then all the three functions $j_\ksy, k_\ksy$ and $\ell_\ksy$ are solutions to the differential equation $\frac 12 f'' + a_\ksy f' = \lam f$. Moreover,  $k_\ksy$ increases whereas $\ell_\ksy$ decreases. The expression $w_\ksy (x) \coloneqq \e^{\mathfrak a_\ksy (x)} (k_\ksy' (x) \ell_\ksy (x) - \ell_\ksy ' (x) k_\ksy (x))$ in fact does not depend on $x$ and equals $w_\ksy (0)=\ell_\ksy (0)\not =0$. 
\item [3. ] For $\omega \in [0,\omega_1)$, $\lim_{x\to \infty} \e^{\omega x}\ell_\ksy (x)=0$. It follows that for any $h\in C[0,\infty]$ one can let
\begin{align} f_\ksy (x) &\coloneqq \tfrac {2\ell_\ksy (x)}{w_\ksy} \int_0^x k_\ksy (y) \e^{\mathfrak a_\ksy (y)} h(y)\ud y  \label{gox:1} \\ &\phantom{=======} + \tfrac {2k_\ksy (x)}{w_\ksy} \int_x^\infty \ell_\ksy (y) \e^{\mathfrak a_\ksy (y)} h(y)\ud y, \qquad x \ge 0. \nonumber \end{align}
It turns out that $f_\ksy$ belongs to $C[0,\infty]$ and $\lam f_\ksy - \frac 12 f_\ksy'' - a_\ksy f_\ksy'= h$.  

\end{itemize}

\begin{proposition}\label{prop:pox2} Each $\mathfrak A_\eps, \eps >0$ is a Feller generator. \end{proposition}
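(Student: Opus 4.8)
The plan is to follow the blueprint of the proof of Proposition~\ref{prop:4}, the only real change being that the Bielecki-norm contraction argument used there (which has no counterpart on the unbounded edge $[0,\infty)$) is replaced by the three facts from \cite{mandl} recalled just above. Concretely, I would first check that $\mathfrak A_\eps$ is densely defined and obeys the positive maximum principle, and then solve the resolvent equation $\lam f - \mathfrak A_\eps f = g$ for every $\lam>0$ and $g\in C(\kiki)$; once the range of $\lam - \mathfrak A_\eps$ is all of $C(\kiki)$, the Hille--Yosida theorem for Feller generators yields the claim.

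For the range, I would define the coordinates of a minimal (killed-at-the-center) resolvent $\rlae^0$ by formula \eqref{gox:1}, setting $(\rlae^0 g)_i \coloneqq f_\ksy$ with $h=g_i$. By fact~3 each coordinate lies in $C[0,\infty]$ and satisfies $\lam (\rlae^0 g)_i - \tfrac12 (\rlae^0 g)_i'' - a_\ksy (\rlae^0 g)_i' = g_i$; moreover $k_\ksy(0)=0$ forces $(\rlae^0 g)_i(0)=0$, so the center is a Dirichlet (killing) point, just as in the finite case. I would then glue the edges through the exit law $\mc L_{\lam,\eps} \coloneqq \mathsf 1 - \lam \rlae^0 \mathsf 1$ and put, exactly as in \eqref{pta:7},
\[ \rlae g \coloneqq \rlae^0 g + \frac{\sui p_i C_{\eps,i}(g)}{\lam \sui p_i C_{\eps,i}(\mathsf 1)}\, \mc L_{\lam,\eps}, \qquad C_{\eps,i}(g) \coloneqq (\rlae^0 g)_i'(0), \]
the denominator being nonzero because $\ell_\ksy$ and $w_\ksy$ are positive (fact~2). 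The verification that $\rlae g$ satisfies conditions (b) and (c) of $\dom{\mathfrak A_\eps}$ is then verbatim the finite-case computation: one uses $(\mc L_{\lam,\eps})_i'(0) = -\lam C_{\eps,i}(\mathsf 1)$ to get $\sui p_i(\rlae g)_i'(0)=0$, and the auxiliary extension $\mc A_\eps$ of $\mathfrak A_\eps$ (to functions obeying (c) but not necessarily (b)) to deduce (c) from $(\lam-\mc A_\eps)\rlae g = g$.

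The genuinely new point, and the one I expect to be the main obstacle, is the behaviour at the points at infinity, i.e.\ at the natural boundary. Here fact~3 does the essential work: it guarantees that the output of $\rlae^0$, and hence $\mc L_{\lam,\eps}$ and $\rlae g$, actually lies in $C[0,\infty]$, so finite limits at infinity exist. For the domain condition (a) I would observe that $\tfrac12(\rlae g)_i''+a_\ksy(\rlae g)_i'$ equals $\lam(\rlae g)_i - g_i$ on the range of $\rlae^0$ and equals $\lam \mc L_{\lam,\eps}$ on the harmonic part (a short calculation gives $\mc A_\eps \mc L_{\lam,\eps}=\lam \mc L_{\lam,\eps}$), so in both cases the limit at infinity exists and is finite automatically. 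For the positive maximum principle at a point at infinity I would argue in divergence form: if $f_i\to L=\sup f\ge 0$ while $m\coloneqq \lim_{x\to\infty}[\tfrac12 f_i''+a_\ksy f_i']$ exists, then, writing $\tfrac12 f_i''+a_\ksy f_i' = \tfrac12\e^{-\mathfrak a_\ksy}(\e^{\mathfrak a_\ksy} f_i')'$ and using that $\mathfrak a_\ksy$ has a finite limit by \eqref{coa:0}, the assumption $m>0$ would force $\e^{\mathfrak a_\ksy}f_i'\to\infty$, hence $f_i'\to\infty$ and $f_i\to\infty$, contradicting $f_i\to L$; thus $m\le 0$. At the center and in the edge interiors the maximum principle is checked exactly as in Proposition~\ref{prop:4}, using positivity of the $p_i$ together with condition (b), and density of $\dom{\mathfrak A_\eps}$ is routine.
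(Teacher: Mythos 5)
Your proposal is correct and follows essentially the same route as the paper: density plus the positive maximum principle (with the same divergence-form contradiction argument at the natural boundary, exploiting the boundedness of $\mathfrak a_\ksy$ from \eqref{coa:0}), and surjectivity of $\lam-\mathfrak A_\eps$ via the minimal resolvent \eqref{gox:1} glued through the exit law exactly as in \eqref{gox:2}. If anything, you supply slightly more detail than the paper does (which explicitly omits the verification that $\rlae g$ satisfies condition (a) at infinity), and your treatment of that point is sound.
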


\begin{proof}Checking that $\dom{\mathfrak A_\eps}$ is densely defined is relatively easy. Also, to establish the positive maximum principle, it suffices to consider the following three cases: (a) a positive maximum of an $f\in \dom{\mathfrak A_\eps}$ is attained at the graph center, (b) it is attained at a finite nonzero distance from the center, and (c)~it is attained at one of the infinite nodes. Case (b) is immediate by a classical result of elementary analysis, and (a) can be taken care of as in e.g. \cite{knigazcup}*{p. 17}, leaving us with (c). 

To take care of (c), we claim that the value of $\mathfrak A_\eps f$ can be positive at no infinite node.  For, assume that it is positive at an $i$th node. Then, there is a $\delta_0>0$ such that $\frac 12 f_\ksy''(x) + a_\ksy (x) f'_\ksy (x) = \frac 12 \e^{-\mathfrak a_\ksy (x)}\frac{\mud}{\mud x} (\e^{\mathfrak a_\ksy (x)} f_\ksy'(x))\ge \delta_0$ for $x$ large enough. It follows that there is a $\delta_1$ such that, for such $x$s, $\e^{\mathfrak a_\ksy (x)} f_\ksy'(x) \ge \delta_1 +2\delta_0 \e^{-2M} x$, and so $f_\ksy'(x) \ge \delta_1\e^{-2M} + 2 \delta_0 \e^{-4M}  x$.  As a result,  $\lim_{x\to \infty} f_\ksy'(x) = \infty$. Since this contradicts the fact that $\lim_{x\to \infty} f_\ksy(x)$ exists and is finite, the positive-maximum principle is established. 

We are thus left with showing that, for any $g\in C(\kiki)$ and $\lam >0$, there is an $f \in \dom{\mathfrak A_\eps}$ such that $\lam f - \mathfrak A_\eps f =g$. To this end, we first define $\rlae^0g$ by requiring that \begin{equation}\label{gox:1a} \rlae^0g \coloneqq \seq{f_\ksy}\end{equation} where $f_\ksy$ is given by  
\eqref{gox:1} with $h$ replaced by $g_i$. Next, we note that \[ f_\ksy '(0)= \frac 2{w_\ksy} \int_0^\infty \ell_\ksy (y) \e^{\mathfrak a_\ksy (y)} g_i(y)\ud y \eqqcolon C_\ksy(g),\] and define 
\begin{equation} f \coloneqq \rlae^0 g + \frac {\sum_{j\in \kad} p_j C_{\eps,j} (g)}{\lam \sum_{j\in \kad} p_j C_{\eps,j} (\mathsf 1)}\mc L_{\lam,\eps}, \qquad g \in C(\kiki), \label{gox:2}\end{equation}
where $\mc L_{\lam,\eps} \coloneqq \mathsf 1 - \lam \rlae^0 \mathsf 1$ whereas $\mathsf 1 \coloneqq (1_{[0,\infty)})_{i\in \kad}$ and $1_{[0,\infty)}(x) \coloneqq 1, x\ge 0$.
To show that this $f$ belongs to $\dom{\mathfrak A_\eps}$ and $\lam f - \mathfrak A_\eps f =g$ holds, we argue as in Step 3 of the proof of Proposition \ref{prop:4}: in particular, we check that $p_i f_\ksy'(0)= p_i C_\ksy (g) - p_i \frac {\sum_{j\in \kad} p_j C_{\eps,j} (g)}{ \sum_{j\in \kad} p_j C_{\eps,j} (\mathsf 1)}C_\ksy (\mathsf 1),$ and so summing over all $i\in \kad$ yields $0$. We omit the remaining details.
 \end{proof}

\begin{proposition}\label{prop:pox3} For $\eps >0$, the process $X_\eps$ is generated by $\mathfrak A_\eps$. \end{proposition}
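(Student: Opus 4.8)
The plan is to leverage the fact that, by Propositions~\ref{prop:pox1} and~\ref{prop:pox2}, both objects are Fellerian: the process $X_\eps$ possesses a Feller resolvent, which I denote $R^{X_\eps}_\lam, \lam >0$, while the operator $\mathfrak A_\eps$ is a Feller generator whose resolvent $\rez{\mathfrak A_\eps}=\rlae$ is given explicitly by \eqref{gox:2}. Since a Feller semigroup and its generator are uniquely determined by the resolvent family, it suffices to prove $R^{X_\eps}_\lam=\rlae$ for one (equivalently all) $\lam>0$; this identity then exhibits $\mathfrak A_\eps$ as the generator of $X_\eps$. Thus the whole task reduces to computing $R^{X_\eps}_\lam g$ for $g\in C(\kiki)$.

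First I would apply the strong Markov property at $\sigma$, the first time $X_\eps$ reaches the graph center, to split the resolvent in the manner of \eqref{4.3:1}:
\[ R^{X_\eps}_\lam g = \rlae^0 g + \mc L_{\lam,\eps}\, R^{X_\eps}_\lam g(0). \]
Here I would invoke the $r$-independence and \emph{local consistency} of the construction $(\star)$--$(\star\star\star)$: since the rules of $Y_{\eps,r}$ and of $Z_{\eps,i}$ agree wherever the distance to the center lies in $(0,r)$ and are glued seamlessly at distance $r$ (so that the reflecting barrier of $Y_{\eps,r}$ there is merely auxiliary and is undone by the concatenation), the trajectory of $X_\eps$ run up to $\sigma$ is, on the $i$th edge, one and the same minimal diffusion with generator $\tfrac12 f''+a_{\eps,i}f'$ on the full half-line $[0,\infty)$, killed at the center. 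By the facts recalled in Section~\ref{sec:gox}, the resolvent of this killed process is exactly $\rlae^0$ of \eqref{gox:1a}, while the discount factor is the exit law $\E_x\e^{-\lam\sigma}=1-\lam\rlae^0\mathsf 1(x)=\mc L_{\lam,\eps}(x)$, just as in the computation preceding \eqref{zdzis:6}. As $\mc L_{\lam,\eps}(0)=1$, the displayed identity already has the shape of \eqref{gox:2}, so everything comes down to identifying the scalar $R^{X_\eps}_\lam g(0)$.

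To compute this center value I would follow the route of Remark~\ref{rem:2} (or, equivalently, the excursion construction of Section~\ref{rem:andrzeja}) adapted to $\kiki$. Because near the center $X_\eps$ coincides with $Y_{\eps,r}$, each excursion of $X_\eps$ away from the center is placed on the $i$th edge with probability $p_i$ and, once there, evolves as an excursion of the minimal diffusion $Z_{\eps,i}$. Approximating $X_\eps$ by the processes $X^\delta_\eps$ that, upon each visit to the center, jump onto the $i$th edge at distance $\delta$ with probability $p_i$, the strong Markov property gives
\[ R^{X^\delta_\eps}_\lam g = \rlae^0 g + \frac{\sum_{i\in\kad} p_i (\rlae^0 g)_i(\delta)}{\lam \sum_{i\in\kad} p_i (\rlae^0\mathsf 1)_i(\delta)}\,\mc L_{\lam,\eps}. \]
Dividing numerator and denominator by $\delta$ and letting $\delta\to 0$, the relations $(\rlae^0 g)_i(0)=0$ and $(\rlae^0 g)_i'(0)=C_\ksy(g)$ turn the fraction into the factor appearing in \eqref{gox:2}. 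Hence $R^{X_\eps}_\lam g(0)=\tfrac{\sum_i p_i C_\ksy(g)}{\lam\sum_i p_i C_\ksy(\mathsf 1)}$, and therefore $R^{X_\eps}_\lam=\rlae=\rez{\mathfrak A_\eps}$, which is the assertion.

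The main obstacle is precisely this last step, the identification of the center value on the \emph{infinite} graph. The subtlety is that, unlike for $Y_{\eps,r}$, an excursion of $X_\eps$ may travel arbitrarily far out along an edge, so one cannot simply quote the finite-graph center value; one must work with the infinite-graph minimal resolvent $\rlae^0$ and its boundary constants $C_\ksy$, and justify the passage $\delta\to 0$ (equivalently, the convergence $X^\delta_\eps \Rightarrow X_\eps$). Here the estimates of Section~\ref{sec:gox} --- the exponential growth of $j_\ksy$ and the attendant decay of $\ell_\ksy$ --- together with the fact established in Section~\ref{sec:ukcja} that $\infty$ is a natural boundary for each $Z_{\eps,i}$ (so no mass escapes to infinity in finite time) are what keep the integrals defining $C_\ksy$ finite and make the limit well defined.
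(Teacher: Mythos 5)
Your strategy is sound and genuinely different from the paper's: you compute the resolvent $R^{X_\eps}_\lam$ of the process and match it with $\rez{\mathfrak A_\eps}=\rlae$ of \eqref{gox:2}, whereas the paper works with the Dynkin characteristic operator $\mathfrak D_\eps$ of \eqref{dynkina}, shows by purely local computations that $\mathfrak D_\eps$ extends $\mathfrak A_\eps$ (on an edge $X_\eps$ is locally $Z_{\eps,i}$, near the center it is locally $Y_{\eps,r}$, at the outer nodes $\mathfrak D_\eps f=0$), and then invokes the facts that for a Feller process the Dynkin operator coincides with the generator and that no generator can properly extend another generator. The advantage of the paper's route is precisely that it never has to determine the value $R^{X_\eps}_\lam g(0)$: all the information about the behaviour of $X_\eps$ at the center that it needs is already packaged in Theorem~\ref{thm:glo2}.

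That is exactly where your argument has a genuine gap. The first display (strong Markov property at $\sigma$) and the reduction to the single scalar $R^{X_\eps}_\lam g(0)$ are fine. But to identify that scalar you pass through the $\delta$-jump processes $X^\delta_\eps$ and then ``let $\delta\to 0$''. The formula for $R^{X^\delta_\eps}_\lam$ and the existence of the limit of its right-hand side are not the problem --- your closing paragraph worries about the wrong issue, since finiteness of $C_\ksy$ and the naturalness of the boundary at infinity are already settled in Sections~\ref{sec:ukcja}--\ref{sec:gox}. The problem is the unproved assertion that $\lim_{\delta\to 0}R^{X^\delta_\eps}_\lam g(0)=R^{X_\eps}_\lam g(0)$, i.e.\ that $X^\delta_\eps$ converges to $X_\eps$; this is essentially equivalent to already knowing the boundary behaviour of $X_\eps$ at the center, which is the very thing to be determined. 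Note that the paper presents this $\delta$-computation in Remark~\ref{rem:2} only as a device to \emph{guess} formula \eqref{pta:7}, and then proves the formula analytically; likewise the excursion decomposition ``with probabilities $p_i$'' that you invoke is established in Section~\ref{rem:andrzeja} only for the drift-free limit process, not for $Y_{\eps,r}$ with $\eps>0$. To close the gap you would need an independent argument pinning down the center value --- for instance a strong-Markov comparison with $Y_{\eps,r}$ showing that $\sum_{i\in\kad}p_i\bigl(R^{X_\eps}_\lam g\bigr)_i'(0)=0$, which together with your first display forces the claimed value of $R^{X_\eps}_\lam g(0)$ --- or else an argument of the paper's local type.
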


\begin{proof} The proof becomes rather straightforward if the notion of \emph{characteristic operator of Dynkin} is invoked. 
To recall see \cite{rogers}*{p. 256}, given $f\in C(\kiki)$ and $x\in \kiki$ we define
\begin{equation}\label{dynkina} \mathfrak D_{\eps}f(x) \coloneqq \lim_{\delta \to 0} \frac{\mathsf E_x f(X_\eps(\tau_\delta)) - f(x)}{\mathsf E_x \tau_\delta}, \end{equation}
if the limit exists and is finite; here, $\tau_\delta$ is the first time $X_\eps$ exits  the open ball centered at $x$ with radius $\delta$. This definition needs to be modified if $x$ is one of the external nodes of $\kiki$: in this case we simply let $\mathfrak D_\eps f (x)=0$. We say that an $f\in C(\kiki)$ belongs to the domain of the Dynkin operator iff $\mathfrak D_\eps f (x)$ exists at all $x\in \kiki$ and $\mathfrak D_\eps f$ is a member of $C(\kiki)$.

Interestingly, even though more often than not, that is, under quite general mild conditions,  the Dynkin operator extends the generator of a Markov process
(see \cite{dynkinmp}*{ \S 3 of Chapter 5}), for each Feller processes it turns out to coincide with the generator (see \cite{rogers}*{p. 256}, comp. \cite{kallenbergnew}*{Thm. 19.23}).
Thus, our strategy of proving the proposition is to show that $\mathfrak D_\eps$ is an extension of $\mathfrak A_\eps$. Since the former operator is no different than the generator of $X_\eps$ and no generator can be a proper extension of another generator, this will establish our claim.

Let, therefore, $f$ belong to $\dom{\mathfrak A_\eps}$. We will show that $f$ is in the domain of $\mathfrak D_\eps$ and that $\mathfrak D_\eps f = \mathfrak A_\eps f$. 

To this end, let first $x \in \kiki $ lie at an $i$th edge at a positive distance $y$ from the graph center, and let $\delta $ be smaller than $y$. By definition, the process $X_\eps$ started at $x$, up to the moment when it exists the interval $(y-\delta,y+\delta)$ on the $i$th edge, is no different than $Z_\ksy$ of Section \ref{sec:ukcja}. 
Also, $f_i$ can be modified in the interval $[0,y-\delta)$ and extended to the entire left axis in such a way that the new function, say, $f_i^\flat$, belongs to the domain of the generator of $Z_\ksy$. Then the limit spoken of in the definition of the Dynkin operator for $Z_\ksy$ at $f_i^\flat$ and $y$ exists and equals $\frac 12 (f_i^\flat)''(y) + a_\ksy (y) (f_i^\flat)'(y)=\frac 12 f_i''(y) + a_\ksy (y) f_i'(y)$.   Since $f_i$ and $f_i^\flat$ coincide in $(y-\delta,y+\delta)$ and, as already mentioned, up to exiting this interval $X_\eps$ differs not from $Z_\ksy$, we conclude that $\mathfrak D_\eps f(x)$ is well-defined and equals $\frac 12 f_i''(y) + a_\ksy (y) f_i'(y) =\mathfrak A_\eps f(x)$.

The case in which $x$ is the graph center is taken care of similarly. First of all,  we fix $r$, and consider only $\delta <r$. Then, $X_\eps$ starting at the graph center is, up to the time when it reaches one of the outer nodes of $\kikde$,  the same as $Y_{\eps,r}$. Secondly, $f$ can be modified outside of $\kikde$ in such a way that this modification, say, $f^\sharp$, when restricted to $\kikr$ belongs to the domain of the generator of $Y_{\eps,r}$. It follows that the limit involved in the definition of the Dynkin operator for $Y_{\eps,r}$ exists and is equal to the value of the generator of $Y_{\eps,r}$ at $f^\sharp$ and $x$, that is, to the common value of $\frac 12 f_i''(0) + a_\ksy (0) f_i'(0), i \in \kad $. Since the limit just spoken of is manifestly the same as the limit involved in the definition of $\mathfrak D_\eps f(x)$, we conclude that also the latter quantity is the common value of   $\frac 12 f_i''(0) + a_\ksy (0) f_i'(0), i \in \kad $, and this is noting else but $\mathfrak A_\eps (x)$.  

The final case is the simplest: if $x$ is one of the outer nodes of $\kiki$, $\mathfrak D_\eps f(x)$ is by definition $0$.

It remains to check that $\mathfrak D_\eps f$ is a continuous function on $\kiki$, and the only points in doubt in $\kiki$ are the outer nodes. 
We need to check, thus,  that for each $i\in \kad$, the limit $\lim_{x\to \infty} [\frac 12 f_i''(x) + a_\ksy (x) f'_i(x)] $ is zero. 
This, however, is rather simple: the argument presented in the proof of Proposition \ref{prop:pox2} persuades us that the limit  cannot be larger than zero, and its straightforward modification shows that neither can it be strictly negative.  

To summarize: we have established that at each point $x\in \kiki$, the quantity $\mathfrak D_\eps f(x)$ is well-defined and coincides with $\mathfrak A_\eps f(x)$; moreover, $\mathfrak D_\eps f = \mathfrak A_\eps f$ is a continuous function on $\kiki$. This means that $f$ is in the domain of the Dynkin operator for $X_\eps$, and that the value of the Dynkin operator is the same as $\mathfrak A_\eps f$. Since $f\in \dom{\mathfrak A_\eps}$ is arbitrary, this completes the proof. \end{proof}

\subsection{Convergence of $X_\eps$, as $\eps \to 0$}\label{sec:coxe}

Let us take another look at the definition of processes $X_\eps, \eps >0$. Theorem \ref{thm:glo2} says that, as $\eps \to 0$, their building blocks, that is, processes $Y_{\eps,r}$, converge, for each $r>0$, to the Walsh processes on $\kikr$. It is thus natural to expect that $X_\eps$s converge, to a process $X_0$ that can be described as follows.

\begin{itemize}
\item [($\star$)] If started at one of the outer nodes of $\kiki$, $X_0$ stays there for ever. 
\item [($\star\star$)] If started at an $i$th edge at a distance $\ge r$ from the graph center, $X_0$ behaves like a one-dimensional Brownian motion  up to the moment when it reaches the graph center for the first time; from that moment on $X_0$ forgets its past and  starts behaving as in rule ($\star\star\star$) below. 
\item [($\star\star\star$)]  If started  at a distance $< r$ from the graph center, $X_0$ behaves like the Walsh process on $\kikr$ (with parameters $\widetilde p_i , i \in \kad$ specified in \eqref{pta:1}) up to the the moment when its distance from the graph center reaches the level $r$ for the first time.  From that moment on $X_0$ forgets its past and starts to behave as in point ($\star\star$) above. 
\end{itemize}

Arguing as in Section \ref{sec:gox} we check that $X_0$ is a Feller process and its generator, say, $\mathfrak A_0$ can be described thusly:
Its  domain $\dom{\mathfrak A_0}$ is composed of elements $f$ of $C(\kiki)\subset (C[0,\infty])^\ka$ such that 
\begin{itemize} 
\item [(a) ] each coordinate $f_i\colon[0,\infty)$ of $f$ is twice continuously differentiable in $[0,\infty)$ (with one-sided derivatives at $x=0$); the limit $\lim_{x\to \infty} f_i''(x)$ exists and is finite, \item [(b) ]   $\sum_{i=1}^\ka \widetilde p_if_i'(0)=0$,
\item [(c) ] $f_i''(0)  $ does not depend on $i\in \kad$,
\end{itemize} 
 and for such $f$ we agree that $\mathfrak A_0 f = \frac 12 f''$. This allows us to conclude, if it was not clear enough from the very description of $X_0$ given above, that $X_0$ is the Walsh process on $\kiki$ (see e.g. \cite{abap,fromsotows,kinetic,kostrykin2012}). 
 
The characterization of $X_0$ given above in points ($\star$)---($\star\star\star$) is apparently a bit more involved than the classical description of the Walsh process, but it has an obvious advantage over the latter: it exhibits a natural connection with processes $Y_{\eps,r}$. We exploit this advantage in the proof of our third main theorem. 

\begin{thm}\label{thm:glo3} Processes $X_\eps$ converge in distribution to $X_0$, provided that they start at the same point in $ \kiki$. \end{thm}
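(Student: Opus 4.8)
The plan is to prove convergence pathwise, by exploiting the regeneration structure built into the very definition of $X_\eps$ together with the convergence of its two families of building blocks. The construction of Section \ref{sec:ukcja} exhibits $X_\eps$, started at a point of $\kiki$, as a concatenation of independent pieces glued at an increasing sequence of switching times $0=\theta_0^\eps\le\theta_1^\eps\le\theta_2^\eps\le\cdots$: while within distance $r$ of the center the process runs as a copy of $Y_{\eps,r}$, up to the instant that distance first reaches $r$; while on the $i$th edge beyond distance $r$ it runs as a copy of $Z_{\eps,i}$, up to the instant it first returns to the center, the edge $i$ being inherited from the preceding piece. The strong Markov property makes these pieces conditionally independent given the switching data, and running the identical recipe with the Walsh process $Y_{0,r}$ on $\kikr$ (with parameters $\widetilde p_i$) and one-dimensional Brownian motion in place of $Y_{\eps,r}$ and $Z_{\eps,i}$ produces exactly $X_0$.

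First I would record the convergence of the two building blocks. For $Y_{\eps,r}$ this is Theorem \ref{thm:glo2}, which, by the equivalence between convergence of Feller semigroups and weak convergence of the corresponding processes recalled after Theorem \ref{thm:glo}, yields $Y_{\eps,r}\Rightarrow Y_{0,r}$ for each fixed $r$. For $Z_{\eps,i}$ I would lean on the representation \eqref{pox:1}, which displays $Z_{\eps,i}$ as one fixed measurable transformation, through $s_{\eps,i}$, $s_{\eps,i}^{-1}$ and $A_{\eps,i}$, of a single Brownian motion $B$. Since $-2M\le\mathfrak a_{\eps,i}\le 2M$ by \eqref{coa:0} and, by \eqref{coa:1}, $\mathfrak a_{\eps,i}(y)\to\pm 2\alpha_i$ as $\eps\to 0$ for $\pm y>0$, these three ingredients converge locally uniformly (the additive functional by dominated convergence), so feeding the same $B$ through \eqref{pox:1} gives $Z_{\eps,i}\Rightarrow Z_{0,i}$, the limit coinciding with the Brownian motion on the edge away from the center demanded by rule ($\star\star$) for $X_0$. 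By Skorokhod's representation theorem I would then place all the countably many pieces on a single probability space so that every $Y$-piece and every $Z$-piece converges almost surely, uniformly on compact time intervals.

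Next I would assemble $X_\eps$ and $X_0$ pathwise on this common space and pass the switching data to the limit. Because each excursion of $Y_{\eps,r}$ from the center out to distance $r$ takes a strictly positive time whose law is bounded away from $0$ uniformly in $\eps$, and successive excursion times are independent by the strong Markov property, on any compact $[0,t]$ only finitely many switches occur and they do not accumulate --- this is precisely the non-explosion argument already supplied in Section \ref{sec:ukcja}. It then remains to show that each switching time $\theta_k^\eps$, which is a first-passage time of the current block to the regular level $r$ (respectively to the center), converges to its $\eps=0$ counterpart $\theta_k^0$. Here I would use that the limiting diffusions cross these levels transversally almost surely, so that the first-passage time is an almost surely continuous functional of the path; consequently $\theta_k^\eps\to\theta_k^0$ and, since the excursions out to distance $r$ take place on single edges, the inherited edge converges as well.

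The concatenation itself is then routine. As all the processes involved have continuous paths and the state space $\kiki$ is compact, convergence in distribution reduces to uniform convergence on compact time intervals. On each of the finitely many blocks $[\theta_k^\eps,\theta_{k+1}^\eps]$ the process $X_\eps$ equals a building block converging uniformly to the corresponding block of $X_0$, and the splicing times converge; splicing finitely many uniformly convergent pieces with convergent endpoints yields $X_\eps\to X_0$ uniformly on $[0,t]$ almost surely, and hence $X_\eps\Rightarrow X_0$. The main obstacle I anticipate is exactly the convergence of the switching times: first-passage times are not continuous functionals in general, so the argument rests on the regularity (transversality) of the limiting diffusions at the level $r$ and at the center, which prevents the first-passage times of the approximating blocks from jumping in the limit. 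Once this continuity is secured, the compactness of $\kiki$ and the continuity of the paths make the remaining steps straightforward.
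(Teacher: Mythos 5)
Your overall strategy --- a pathwise concatenation of the building blocks at the switching times --- is natural, but the step you yourself single out as the main obstacle contains a genuine gap that your proposed remedy does not close. The switching time out of a $Y_{\eps,r}$-block is the first time the distance from the graph center reaches $r$, and $r$ is the \emph{reflecting} boundary of $\kikr$. The limit process $Y_{0,r}$ therefore only touches the level $r$ and never crosses it, so the ``transversal crossing'' argument --- which does give a.s.\ continuity of first-passage times to \emph{interior} levels of Brownian-type paths --- is unavailable precisely here. Concretely: if $\chi_n\to\chi$ uniformly with all paths confined to $\kikr$, then $\liminf_n\tau_r(\chi_n)\ge\tau_r(\chi)$, but nothing in the uniform convergence prevents $\chi_n$ from staying strictly inside $\kikr$ long after $\chi$ has touched the boundary, so $\limsup_n\tau_r(\chi_n)$ may exceed $\tau_r(\chi)$. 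Repairing this requires either an additional quantitative, uniform-in-$\eps$ estimate (a reflected diffusion started within $\eta$ of the barrier hits it before time $\delta$ with probability close to $1$), or a redesign of the concatenation so that every switching level is interior to both charts (feasible, since the construction does not depend on $r$); neither appears in your proposal. A more minor imprecision: on the whole line $Z_{\eps,i}$ converges not to a Brownian motion but to a skew Brownian motion (the even extension yields $\alpha=2\alpha_i$ in \eqref{intro:1}); this is harmless only because you use $Z_{\eps,i}$ killed at $0$.

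For comparison, the paper's proof avoids hitting-time continuity altogether. It works directly with distributions on path space via Lemma \ref{andreya}: on the open set $U_r$ of paths that never reach the outer nodes of $\kikr$ before time $t$, the laws of $X_\eps$ and $Y_{\eps,r}$ literally coincide, so their total-variation distance is at most $\mu_{\eps,\infty}(E\setminus U_r)=\mathsf P_x(\tau_{\eps,r}\le t)$, which tends to $0$ as $r\to\infty$ uniformly in $\eps$ by Lemma \ref{tamel:1}; Theorem \ref{thm:glo2} supplies the convergence of $Y_{\eps,r}$ for each fixed $r$, and a three-term triangle inequality in the Prokhorov metric finishes the argument. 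No coupling, no Skorokhod representation, and no analysis of switching times is needed.
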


Before embarking on a proof, we note an important change in perspective involved in the theorem: instead of speaking of convergence of semigroups $\sem{\mathfrak A_\eps}$ we concentrate on the convergence of distributions of processes $X_\eps$. Of course, as we know from the  Trotter--Kato--Neveu--Mackevi\v cius Theorem \cite{kallenbergnew}*{Thm. 19.25}, convergence of semigroups is equivalent to convergence of distributions, but the view on the latter will turn out to be a bit clearer in our case.

We will need the following abstract result, concerning convergence of a family of measures $\mu_{\eps,r}$, defined on a complete separable metric space $E$ and indexed by two parameters: $\eps\ge 0$ and $r\in (r_0,\infty]$, for a certain $r_0\ge 0$.  To connect these measures to the objects under study, fix an $x\in \kiki$, a starting point for all stochastic processes involved, let $r_0$ be the distance of $x$ from the graph center, and think of 
\begin{itemize}
\item $E$ as the space of paths; that is, given $t>0$,  let $E$ be the space of continuous functions  $\chi \colon [0,t]\to \kiki$, such that $\chi (0) =x$, 
\item $\mu_{\eps,r}, \eps >0, r \in (r_0,\infty)$ as the distribution of $Y_{\eps,r}$, $\mu_{\eps,\infty}$ as the distribution of $X_\eps$, $\mu_{0,r}$ as the distribution of the Walsh process on $\kikr$, and  the limit measure $\mu_{0,\infty}$ as the distribution of $X_0$ (all of these processes are seen as random elements having values in $E$).    
\end{itemize}

\begin{lemma}\label{andreya}Suppose that 
\begin{itemize}
\item [(i)] for every $r>0$, the measures $\mu_{\eps,r}$ converge weakly, as $\eps \to 0$, to  $\mu_{0,r}$,
\item [(ii)] for each $r>0$ the is an open set $U_r$ with the following two properties: \begin{itemize} \item [(a) ] as restricted to $U_r$ the measures $\mu_{\eps,r}$ and  $\mu_{\eps,\infty}$ coincide: $
         \mu_{\eps,r}|_{U_{r}} = \mu_{\eps,\infty}|_{U_{r}}, \eps>0$,
        and \item [(b) ] $
          \lim_{ r\to\infty}\sup_{\eps\ge 0}\mu_{\eps,\infty} (E\setminus U_{r})=0$.\end{itemize}
        \end{itemize}
        Then   the weak limit $\grae \mu_{\eps,\infty}$ exists and equals $\mu_{0,\infty}$. 
\end{lemma}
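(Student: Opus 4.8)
The plan is to prove the asserted weak convergence $\grae \mu_{\eps,\infty}=\mu_{0,\infty}$ by testing against bounded continuous functionals on $E$ and reducing everything to the set $U_\rho$, where hypothesis (ii)(a) lets me trade the measures $\mu_{\eps,\infty}$ for the measures $\mu_{\eps,r}$, to which the weak convergence (i) applies. Concretely, I would fix a bounded continuous $F\colon E\to\R$ with $\|F\|\le 1$ and an $\eta>0$, and run a three-$\eta$ estimate. First, using (ii)(b), I choose $\rho$ so large that $\sup_{\eps\ge 0}\mu_{\eps,\infty}(E\setminus U_\rho)<\eta$; because the supremum runs over $\eps\ge 0$, this single choice controls simultaneously the tails of all the approximating measures and of the candidate limit $\mu_{0,\infty}$. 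I then pick any $r>\rho$, so that $U_\rho\subseteq U_r$ and hence (ii)(a), restricted from $U_r$ down to the smaller set $U_\rho$, gives $\mu_{\eps,\infty}|_{U_\rho}=\mu_{\eps,r}|_{U_\rho}$; the same identity at $\eps=0$, namely $\mu_{0,\infty}|_{U_\rho}=\mu_{0,r}|_{U_\rho}$, holds by the very construction of $X_0$, which on $\kikr$ follows the Walsh process until the distance $r$ is reached, exactly as $X_\eps$ follows $Y_{\eps,r}$.

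With these in place I would chain the identities and the single genuine limit
\[
\int_E F\,d\mu_{\eps,\infty}\;\approx\;\int_{U_\rho} F\,d\mu_{\eps,\infty}\;=\;\int_{U_\rho} F\,d\mu_{\eps,r}\;\longrightarrow\;\int_{U_\rho} F\,d\mu_{0,r}\;=\;\int_{U_\rho} F\,d\mu_{0,\infty}\;\approx\;\int_E F\,d\mu_{0,\infty},
\]
where each symbol $\approx$ holds up to $\eta$ by the tail bound of the first step, the two equalities are the restrictions of (ii)(a) at $\eps>0$ and at $\eps=0$, and the arrow is the convergence as $\eps\to 0$ that carries the whole argument. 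Taking $\limsup_{\eps\to 0}$ of the resulting decomposition yields $\limsup_{\eps\to 0}\bigl|\int_E F\,d\mu_{\eps,\infty}-\int_E F\,d\mu_{0,\infty}\bigr|\le 2\eta$, and letting $\eta\to 0$ with $F$ arbitrary gives the claim.

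The middle convergence is where the real work sits, and it is also what dictates the choice $\rho<r$. Since $U_\rho$ is open, the function $\mathbf 1_{U_\rho}F$ is not continuous, and weak convergence does not in general survive restriction to an open set; what rescues the step is that $U_\rho$ is a $\mu_{0,r}$-continuity set. Indeed, $\partial U_\rho$ is contained in the set of paths whose maximal distance from the graph center over $[0,t]$ equals exactly $\rho$, and under $\mu_{0,r}$ the relevant process is reflected only at the strictly larger radius $r$; near the level $\rho$ it is an ordinary Walsh/Brownian diffusion, whose running maximum has no atoms, so $\mu_{0,r}(\partial U_\rho)=0$. Given this, the Portmanteau theorem in the form valid for bounded functions whose set of discontinuities is null (equivalently, Skorokhod representation followed by dominated convergence) yields $\int_{U_\rho}F\,d\mu_{\eps,r}\to\int_{U_\rho}F\,d\mu_{0,r}$ from hypothesis (i).

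The main obstacle is precisely this boundary term. Had one instead used the maximal set on which $Y_{\eps,r}$ and $X_\eps$ agree — the paths that stay below the reflecting radius $r$ — then $\partial U_r$ would be charged by $\mu_{0,r}$, since the reflected limit reaches the barrier $r$ with positive probability; the open set would fail to be a continuity set and the restriction step would collapse. Inserting a gap by keeping the cutoff radius $\rho$ strictly inside the reflecting radius $r$ is exactly the device that removes the difficulty. The only structural facts about the concrete processes that enter are the atomlessness of the running maximum at interior radii and the identity (ii)(a) at $\eps=0$, the latter being automatic from the definition of $X_0$; everything else is the soft measure-theoretic bookkeeping outlined above.
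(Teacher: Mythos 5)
Your overall skeleton --- a three-term decomposition in which hypothesis (ii) controls the two outer terms uniformly in $\eps$ and hypothesis (i) supplies the single genuine limit in the middle --- is exactly the paper's strategy; the paper phrases it metrically, via
\[
\ud_{P}(\mu_{\eps,\infty},\mu_{0,\infty})\le \ud_{TV}(\mu_{\eps,\infty},\mu_{\eps,r})+\ud_{P}(\mu_{\eps,r},\mu_{0,r})+\ud_{TV}(\mu_{0,r},\mu_{0,\infty}),
\]
using that two probability measures which coincide on $U_r$ have total variation distance at most $\mu_{\eps,\infty}(E\setminus U_r)$. But your execution of the middle step does not prove the lemma \emph{as stated}. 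You apply the weak convergence (i) to $\int_{U_\rho}F\,\mud\mu_{\eps,r}$, and to justify this you need two things: the inclusion $U_\rho\subseteq U_r$ for $\rho<r$, and the fact that $U_\rho$ is a $\mu_{0,r}$-continuity set. Neither is a hypothesis of the lemma: the sets $U_r$ are only assumed to exist for each $r$ separately, with no monotonicity in $r$, and nothing in (i)--(ii) rules out $\mu_{0,r}(\partial U_\rho)>0$. You patch both points with structural facts about the concrete processes (the nesting of the $U_r$'s and the atomlessness of the running maximum of the reflected Walsh process strictly below the barrier), which turns your argument into a proof of the specific application rather than of the abstract lemma it is meant to establish.

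The gap is also avoidable, and seeing why makes the paper's proof transparent: you never needed to restrict the integrals to $U_\rho$ at all. Since $\mu_{\eps,r}$ and $\mu_{\eps,\infty}$ are probability measures agreeing on $U_r$, one has $\mu_{\eps,r}(E\setminus U_r)=\mu_{\eps,\infty}(E\setminus U_r)$ and hence
\[
\Bigl|\int_E F\,\mud\mu_{\eps,\infty}-\int_E F\,\mud\mu_{\eps,r}\Bigr|\le 2\|F\|\,\mu_{\eps,\infty}(E\setminus U_r),
\]
which is uniformly small in $\eps$ by (ii)(b); the analogous bound at $\eps=0$ handles the third term, and (i) is then applied to the \emph{full-space} integral of the continuous $F$, where no continuity-set issue arises. (One caveat shared by both arguments: the $\eps=0$ instance of (ii)(a) is genuinely needed --- the lemma writes ``$\eps>0$'' in (a), but the comparison of $\mu_{0,r}$ with $\mu_{0,\infty}$ uses the identity at $\eps=0$ as well; in the application this is supplied by the construction of $X_0$, as you correctly note.)
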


\begin{proof} Since the Prokhorov distance $\ud_P$ is dominated by the total variance distance $\ud_{TV}$, the triangle inequality yields 
\[
      \ud_{P}(\nu_{\eps,\infty}, \nu_{0,\infty})\leq \ud_{TV}(\nu_{\eps,\infty}, \nu_{\eps ,r})+
      \ud_{P}(\nu_{\eps ,r}, \nu_{0 ,r})+\ud_{TV}(\nu_{0,r}, \nu_{0,\infty}).
      \]
Next, sub-point (a) of condition (ii) implies that the total variation distance between $\mu_{\eps,r}$ and $\mu_{\eps,\infty}$ 
does not exceed $\mu_{\eps,\infty}(E\setminus U_r)$ and thus, by sub-point (b), converges to $0$ uniformly in $\eps\ge 0$. Hence, given $\delta >0$, one can choose an $r$ so large that the sum of the first and the third terms in the display above is smaller than $\frac 13\delta$ regardless of what $\eps\ge 0$ is. Condition (i) says, moreover, that the Prokhorov distance between $\mu_{\eps,r}$ and $\mu_{0,r}$ converges to $0$. It follows that for $\eps $ small enough the second term in the display does not exceed $\frac 13 \delta$. This, however, means that $\limsup_{\eps \to 0}       \ud_{P}(\nu_{\eps,\infty}, \nu_{0,\infty})\le \frac 23 \delta <\delta$, and implies $\lim_{\eps \to 0}       \ud_{P}(\nu_{\eps,\infty}, \nu_{0,\infty})=0$, $\delta >0$ being arbitrary. Since the convergence in the Prokhorov distance is equivalent to the weak convergence, we are done.  \end{proof}

\begin{proof}[Proof of Theorem \ref{thm:glo3}] In the notations introduced before Lemma \ref{andreya}, our task is to show that $\grae \mu_{\eps,\infty} = \mu_{0,\infty}$ weakly, and of course to this end we want to use the lemma.  

Condition (i) is satisfied by Theorem \ref{thm:glo2}. To prove (ii) we introduce $U_r \coloneqq \{\chi \in E\colon \max_{s\in [0,t]} |\chi (s)|< r\}$; this is the space of paths that up to time $t$ never touch the outer nodes of $\kikr$. It is clear that, as restricted to $U_r$, the distributions of  $Y_{\eps, r}$ and $X_\eps$ are the same, so that assumption (a) is satisfied. This is just to say that up the first time $\tau_{\eps,r}$ when $X_\eps$ touches one of the outer nodes, this process is indistinguishable from $Y_{\eps,r}$. 
Moreover, $\mu_{\eps,\infty} (E\setminus U_r) = \mathsf P_x (\tau_{\eps,r} \le t)$ and, as a consequence of Lemma \ref{tamel:1}, $\lim_{r\to \infty} \sup_{\eps >0} \mathsf P_x (\tau_{\eps,r}\le t)=0$.  Since, clearly, $\lim_{r\to \infty} \mathsf P_x (\tau_{0,r}\le t)=0$, also condition (b) holds, and we are done.  \end{proof}

 \subsection{Convergence of resolvents}\label{sec:corbis} 
 \newcommand{\comek}{C_\omega [0,n]}

 Emboldened by the simplicity of the proof of Theorem \ref{thm:glo3}, in this section, in keeping with the semigroup-theoretic spirit of the paper,  we give a direct proof of the convergence of the resolvents of $\sem{\mathfrak A_\eps}$ to the resolvent of $\sem{\mathfrak A_0}$. Since the statement on this convergence is, by the Trotter--Kato--Neveu--Mackevi\v cius Theorem, equivalent to our Theorem \ref{thm:glo3}, rather than providing all the details, we restrict ourselves to an extensive sketch  of the full reasoning.     
 

The following lemma provides information on the convergence of the functions defined in points 1.-3. in Section \ref{sec:gox}. 

\begin{lemma}\label{lem:resolvents}  For each $i \in \kad$,  \begin{itemize} \item [(a)]$\grae j_{\eps,i} (x) = \cosh (\slam x)$ uniformly with respect to $x$ in compact subintervals of $[0,\infty)$,  
\item [(b)] $\grae k_\ksy (x) = \frac {\e^{-2\alpha_i}}{\slam} \sinh (\slam x ) $ uniformly with respect to $x$ in compact subintervals of $[0,\infty)$,  

\item [(c)] $\grae \frac {\ell_\ksy (x)}{w_\ksy} =  \e^{-\slam x} $ uniformly with respect to $x\ge 0$,

\item [(d)] $\grae f_\ksy (x) =\frac 1{\slam} \int_0^\infty (\e^{-\slam |x-y|} - \e^{-\slam (x+y)}) h(y)\ud y$, 
uniformly with respect to $x\ge 0$.

\end{itemize}\end{lemma}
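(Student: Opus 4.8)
The plan is to treat the four items in the order (a)$\to$(b)$\to$(c)$\to$(d), throughout exploiting the pointwise behaviour $\e^{\pm\mathfrak a_\ksy(x)}\to\e^{\pm 2\alpha_i}$ valid for every $x>0$ (immediate from \eqref{coa:1}) together with sharp two-sided control of the auxiliary function $j_\ksy$ of item~1 in Section~\ref{sec:gox}. Besides the recorded lower bound $j_\ksy(x)\ge\cosh\omega_1 x$, I would first establish two companions by the same monotone-iteration/comparison argument: the matching upper bound $j_\ksy(x)\le\cosh\omega_2 x$ with $\omega_2\coloneqq\sqrt{2\lam\e^{2M}}$ (replace $\e^{-2M}$ by $\e^{2M}$ in the estimate of the kernel $\e^{\mathfrak a_\ksy(z)-\mathfrak a_\ksy(y)}$), and the \emph{shifted} lower bound $j_\ksy(u)\ge j_\ksy(v)\cosh\omega_1(u-v)$ for $u\ge v$ (restart the integral equation at $v$ and drop the nonnegative $j_\ksy'(v)$ term). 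The upper bound yields at once the $\eps$-uniform lower bound $w_\ksy=\int_0^\infty \e^{-\mathfrak a_\ksy}j_\ksy^{-2}\ge\e^{-2M}/\omega_2>0$.

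For (a) I would copy the scheme of Proposition~\ref{prop:1} on each fixed interval $[0,n]$: equip $C[0,n]$ with the Bielecki norm $\|f\|_\omega=\max_{0\le x\le n}\e^{-\omega x}|f(x)|$ and regard $j_\ksy$ as the fixed point of $(S_\ksy f)(x)=1+2\lam\int_0^x\e^{-\mathfrak a_\ksy(y)}\int_0^y\e^{\mathfrak a_\ksy(z)}f(z)\ud z\ud y$. Since $|\e^{\mathfrak a_\ksy(z)-\mathfrak a_\ksy(y)}|\le\e^{2M}$ for $z\le y$, the $S_\ksy$ are contractions with constant $\le 2\lam\e^{2M}/\omega^2<1$ independent of $\eps$ and of $n$; and by dominated convergence $S_\ksy(\cosh\slam\cdot)\to\cosh\slam\cdot$, the latter being the fixed point of the limit operator $S$ with kernel $\equiv 1$. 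The extension of the Banach fixed point theorem used in Proposition~\ref{prop:1} then gives $j_\ksy\to\cosh\slam\cdot$ uniformly on compacts. Statement (b) is a direct passage to the limit in $k_\ksy(x)=j_\ksy(x)\int_0^x\e^{-\mathfrak a_\ksy(y)}j_\ksy(y)^{-2}\ud y$: the integrand is dominated by $\e^{2M}$ and converges pointwise to $\e^{-2\alpha_i}\cosh^{-2}\slam y$, so dominated convergence and (a) give $k_\ksy(x)\to\e^{-2\alpha_i}\cosh(\slam x)\int_0^x\cosh^{-2}\slam y\,\ud y=\frac{\e^{-2\alpha_i}}{\slam}\sinh\slam x$, uniformly on compacts (split the error into $|j_\ksy-\cosh\slam\cdot|$ times a bounded integral plus $\cosh\slam\cdot$ times an $L^1$-small integral).

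For (c) I would use the structural remark that $\phi_\ksy\coloneqq\ell_\ksy/w_\ksy$ is precisely the decreasing solution normalised by $\phi_\ksy(0)=1$. Writing $\phi_\ksy(x)=j_\ksy(x)\big(\int_x^\infty\e^{-\mathfrak a_\ksy}j_\ksy^{-2}\big)\big/\big(\int_0^\infty\e^{-\mathfrak a_\ksy}j_\ksy^{-2}\big)$ and applying dominated convergence (domination $\e^{2M}\cosh^{-2}\omega_1 y$) to numerator and denominator, the factor $\e^{-2\alpha_i}$ cancels and the pointwise limit is $\cosh(\slam x)(1-\tanh\slam x)=\e^{-\slam x}$. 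The uniformity over the whole half-line, which no compact estimate delivers, I would obtain \emph{for free} from monotonicity: each $\phi_\ksy$ is non-increasing and the limit $\e^{-\slam x}$ is continuous, decreasing, and tends to $0$; so Pólya's theorem (the monotone form of Dini's theorem) upgrades pointwise to uniform convergence on each $[0,N]$, while for $x>N$ both functions are already below $\e^{-\slam N}+o(1)$. This is a clean way to avoid any sharp exponential bound on $\phi_\ksy$ at this stage.

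The genuine work is (d), and the obstacle is exactly the uniformity in $x$ on all of $[0,\infty)$: a term-by-term limit involves $k_\ksy(x)$, which grows like $\e^{\omega_2 x}$, so the mismatched rates $\omega_1\neq\omega_2$ of the crude two-sided bounds rule out any direct exponential estimate. My remedy is a uniform \emph{kernel} bound. With $f_\ksy(x)=\int_0^\infty\kappa_\ksy(x,y)h(y)\ud y$ and $\kappa_\ksy(x,y)=2\,k_\ksy(x\wedge y)\,\phi_\ksy(x\vee y)\,\e^{\mathfrak a_\ksy(y)}$, I would first prove the diagonal estimate $k_\ksy(x)\phi_\ksy(x)=k_\ksy(x)\ell_\ksy(x)/w_\ksy\le\e^{2M}/\omega_1$ (use the shifted bound to get $j_\ksy(x)^2\int_x^\infty\e^{-\mathfrak a_\ksy}j_\ksy^{-2}\le\e^{2M}/\omega_1$ and $\int_0^x\e^{-\mathfrak a_\ksy}j_\ksy^{-2}\le w_\ksy$), and then the off-diagonal decay, for $x\le y$, $k_\ksy(x)/k_\ksy(y)\le j_\ksy(x)/j_\ksy(y)\le\cosh^{-1}\omega_1(y-x)$ (again the shifted bound). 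Combining these gives the $\eps$-free convolution bound $\kappa_\ksy(x,y)\le\frac{4\e^{4M}}{\omega_1}\e^{-\omega_1|x-y|}\eqqcolon\Theta(x-y)$. With $\Theta$ in hand the proof closes in two steps: on any $[0,N]$, the convergences (b), (c) together with dominated convergence (dominating function $\Theta$) yield $f_\ksy\to f_0$ uniformly; and for the tail I reduce to $h$ with $\lim_{y\to\infty}h(y)=0$ (by linearity, using that $f_\ksy=\lam^{-1}=f_0$ when $h\equiv 1$), after which, choosing $\rho$ with $|h|<\eta$ on $[\rho,\infty)$, one estimates $|f_\ksy(x)|\le\|h\|\int_0^\rho\Theta(x-y)\ud y+\eta\int_{\R}\Theta\le C\|h\|\e^{-\omega_1(x-\rho)}+C'\eta$, which is $<2C'\eta$ for all large $x$, uniformly in $\eps$; the same bound holds for $f_0$. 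Hence $\sup_{x\ge0}|f_\ksy(x)-f_0(x)|\to0$. The one delicate ingredient throughout is this $\eps$-uniform exponential kernel bound; everything else is dominated convergence expressed through the explicit hyperbolic primitives.
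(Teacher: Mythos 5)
Your proposal is correct in substance and, for parts (b)--(d), takes a genuinely different route from the paper's. For (a) you reproduce the paper's argument (Bielecki-norm contractions on $C[0,n]$ with a common, $\eps$-independent contraction constant, followed by convergence of fixed points). For (b) and (c) the paper runs two further fixed-point arguments, realizing $k_\ksy$ and $\ell_\ksy/w_\ksy$ as fixed points of affine perturbations of the operator from (a); you instead pass to the limit directly in the explicit representations $k_\ksy=j_\ksy\int_0^{\,\cdot}\e^{-\mathfrak a_\ksy}j_\ksy^{-2}$ and $\ell_\ksy/w_\ksy=j_\ksy\bigl(\int_{\cdot}^\infty\e^{-\mathfrak a_\ksy}j_\ksy^{-2}\bigr)\big/\bigl(\int_0^\infty\e^{-\mathfrak a_\ksy}j_\ksy^{-2}\bigr)$ by dominated convergence; both work, and your monotonicity upgrade to uniformity on all of $[0,\infty)$ in (c) is the same device the paper uses. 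The real divergence is (d): the paper proves the identity $f_\ksy=\tfrac1\lam\mathsf 1-\tfrac{\ell_\ksy}{\lam w_\ksy}$ for $h\equiv 1$, deduces equiboundedness (with bound $1/\lam$) of the maps $h\mapsto f_\ksy$, and then verifies convergence only on the dense set spanned by $\mathsf 1_{[0,\infty)}$ and compactly supported $h$; you instead establish the $\eps$-uniform kernel bound $\kappa_\ksy(x,y)\le\tfrac{4\e^{4M}}{\omega_1}\e^{-\omega_1|x-y|}$ and close by dominated convergence plus a tail estimate. Your kernel bound is sound for \emph{both} orderings of $x$ and $y$, since in either case $k_\ksy(x\wedge y)\ell_\ksy(x\vee y)/w_\ksy=\bigl[k_\ksy(x\wedge y)/k_\ksy(x\vee y)\bigr]\cdot\bigl[k_\ksy\ell_\ksy/w_\ksy\bigr](x\vee y)$, so the diagonal estimate applied at the larger point combines with the ratio decay $k_\ksy(u)/k_\ksy(v)\le j_\ksy(u)/j_\ksy(v)\le\cosh^{-1}\omega_1(v-u)$, $u\le v$. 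The kernel route buys a quantitative off-diagonal decay uniform in $h$; the paper's route is shorter because the computation of $f_\ksy$ for $h\equiv 1$ does the global bookkeeping for free.

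One slip to repair: it is \emph{not} true that $f_\ksy=\lam^{-1}$ when $h\equiv1$; the constant $\lam^{-1}$ violates the boundary condition $f_\ksy(0)=0$, and the correct identity is $f_\ksy=\tfrac1\lam\bigl(1-\tfrac{\ell_\ksy}{w_\ksy}\bigr)$, whose limit is $\tfrac1\lam(1-\e^{-\slam x})$. This does not damage your reduction to $h$ vanishing at infinity: the constant case still converges uniformly on $[0,\infty)$, but via part (c) applied to this identity rather than trivially, and its limit agrees with $\tfrac1{\slam}\int_0^\infty(\e^{-\slam|x-y|}-\e^{-\slam(x+y)})\ud y=\tfrac1\lam(1-\e^{-\slam x})$. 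With that one-line correction the argument is complete.
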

\begin{proof} \ \

\bf  (a) \rm Given an $\omega>0$ and a natural $n$, let $\comek$ be the space of continuous functions $f\colon [0,n]\to \R$ equipped with the Bielecki norm \( \|f\|_\omega \coloneqq \max_{x\in [0,n]} \e^{-\omega x}|f(x)|\). The operator $T_{\eps,i}$ given, for $f\in \comek $, by 
\[ (T_{\eps, i} f)(x) \coloneqq  1 + 2\lam \int_0^x \e^{-\mathfrak a_\ksy (y)}\int_0^y \e^{\mathfrak a_\ksy (z)} f (z) \ud z \ud y ,  \qquad x \in [0,n],\] 
maps $\comek$ into itself,  and we check, as in the proof of Lemma \ref{lem:2} that
\[ \|T_\ksy f - T_\ksy g\|_\omega \le \tfrac{2\lam \e^{2M}}{\omega^2} \|f-g\|_\omega, \qquad f,g \in \comek, \omega >0. \] 
It follows that  for $\omega > \omega_0\coloneqq \sqrt{2\lam \e^{2M}}$, $T_\ksy$  is a contraction in $\comek$; $j_\ksy$, as restricted to $[0,n]$ is the unique fixed point of $T_\ksy$.

Also, as in Lemma \ref{lem:3} we check that $\grae T_{\eps,i}f = Tf $ (in the standard supremum norm, and hence in the norm of $\comek$) where 
\[ (Tf)(x) = 1 + 2\lam \int_{0}^x \int_{0}^y f(z) \ud z \ud y, \qquad f\in \comek.\]
Therefore,  $j_\ksy$s, as the fixed points of $T_\ksy$s, converge to the fixed point of $T$, that is, to $\cosh (2\lam \cdot )$, uniformly with respect to $x\in [0,n]$. Since $n$ is arbitrary, this proves (a).

\bf (b) \rm Let 
\[ (S_{\eps, i} f)(x) \coloneqq  (T_\ksy f)(x) + \int_0^x \e^{-\mathfrak a_\ksy (y)}\ud y -1,\qquad  f\in \comek, x \in [0,n],\]
 where $T_\ksy$ was defined in the proof of point (a). It is clear that $S_\ksy$ inherits crucial properties of $T_\ksy$, and in particular is a contraction in $\comek$ provided that $\omega >\omega_0$. 
Since $k_\ksy (0)=0$ and $k_\ksy '(0)=1$, a straightforward calculation establishes that $k_\ksy$, as restricted to $[0,n]$,  is a (necessarily unique) fixed point of $S_\ksy$. Finally, $\grae S_\ksy f = S_i f$ where 
\[ (S_if)(x) = \e^{-2\alpha_i}x + 2\lam \int_{0}^x \int_{0}^y f(z) \ud z \ud y, \qquad f\in \comek, x \in [0,n].\]
Therefore,  $k_\ksy$s, as the fixed points of $S_\ksy$s, converge to the fixed point of $S_i$, that is, to $\frac {\e^{-2\alpha_i}}{\slam} \sinh (2\lam \cdot )$, and we complete the proof as in (a).

\bf (c) \rm Let $c_\ksy \coloneqq \int_0^\infty \e^{-\mathfrak a_\ksy (y)} \frac 1{(j_\ksy (y))^2} \ud y$. Since $j_\ksy (y) \ge \cosh \omega_1 y$, the integral is finite. By the same token, because of (a), $\grae c_\ksy = c_i \coloneqq \e^{-2\alpha_i} \int_0^\infty \frac 1{(\cosh 2\lam y)^2} \ud y= \e^{-2\alpha_i} \left [\frac {\tanh \slam y}\slam \right ]_{y=0}^{y=\infty} = \e^{-2\alpha_i}\frac 1\slam $.  

Now, $\frac{\ell_\ksy (0)}{w_\ksy}=1$ and $\frac{\ell_\ksy'(0)}{w_\ksy} =-\frac 1{c_\ksy}$. As above we can argue therefore that $\frac{\ell_\ksy}{w_\ksy}$ is a unique fixed point of the map $U_\ksy$ defined by 
\[ (U_\ksy f)(x) \coloneqq  (T_\ksy f)(x) - \tfrac 1{c_\ksy} \int_0^x \e^{-\mathfrak a_\ksy (y)}\ud y,\qquad  f\in \comek, x \in [0,n].\]
Since $\grae U_\ksy f = U_if $, where 
\[ (Uf)(x) = 1 - \slam x + 2\lam \int_{0}^x \int_{0}^y f(z) \ud z \ud y, \qquad f\in \comek, x \in [0,n],\]
$\frac{\ell_\ksy}{w_\ksy}$s converge, as $\eps \to 0$, to the fixed point of $U$, that is, to $\exp\{-\slam \cdot\}$.  

This establishes uniform convergence in compact subintervals of $[0,\infty)$, $n$ being arbitrary. However, all $\ell_\ksy$s are non-increasing, and this allows deducing that the limit is in fact uniform on the entire half-line. 

\newcommand{\wenta}{R_{\lam,\eps,i}^\sharp}
\bf (d) \rm For $\lam >0$, let $\wenta$ denote the map $h \mapsto f_\ksy$ (recall, that in $f_\ksy$ of \eqref{gox:1} dependence on $\lam$ is notationally suppressed), and let, as before,  $\mathsf 1_{[0,\infty)}$ be the member of $C[0,\infty]$ defined by $\mathsf 1_{[0,\infty)}(x)=1, x \ge 0$.   It is easy to see that $|\wenta h(x)|\le \|h\| \wenta \mathsf 1_{[0,\infty)} (x), x \ge 0$.   

Next, the relation $\frac{\mud}{\mud x} ( \e^{\mathfrak a_\ksy (x)} \ell_\ksy '(x)) = 2\lam \ell_\ksy (x)  \e^{\mathfrak a_\ksy (x)} $ tells us that the function $x\mapsto \e^{\mathfrak a_\ksy (x)} \ell_\ksy '(x)$, which we know has negative values, is increasing, and thus has a limit as $x\to \infty$. Also, the limit cannot be smaller than zero, as this would contradict $\lim_{x\to \infty} \ell_\ksy (x) =0$ (because $\lim_{x\to \infty} \e^{\mathfrak a_\ksy (x)}=\e^{2\alpha_i}$). Hence, using the relation once again, we check that the second term in $\wenta \mathsf 1 (x)$ equals $- \frac {k_\ksy (x) \ell_\ksy'(x)}{\lam w_\ksy} \e^{\mathfrak a_\ksy (x)}$.  Similarly, the first term amounts to $\ell_\ksy (x) \frac{\e^{\mathfrak a_\ksy (x)} k'_\ksy (x) - 1}{\lam w_\ksy}$, and so recalling the definition of $w_\ksy$ yields
\[ \wenta \mathsf 1_{[0,\infty)} = \tfrac 1\lam \mathsf 1_{[0,\infty)} - \tfrac{\ell_\ksy}{\lam w_\ksy}\le \tfrac 1\lam \mathsf 1_{[0,\infty)}.  \]

It follows that the operators $\wenta, \eps >0, i \in \kad$ ($\lam$ is fixed) are equibounded (with bound $\frac 1\lam$), and so to complete the proof it suffices to show the convergence of $ \wenta h$ to the appropriate limit for $h$ in a dense subset of $C[0,\infty]$.  Moreover, the formula in the display above in conjunction with (c) 
shows that \[ \grae \wenta \mathsf 1 (x) = \tfrac 1\lam -\tfrac 1\lam \e^{-\slam x}=\tfrac 1{\slam} \int_0^\infty (\e^{-\slam |x-y|} - \e^{-\slam (x+y)}) \ud y\]  uniformly in $x\ge 0$. We have thus proved the required convergence in the case of $h=\mathsf 1_{[0,\infty)}$. 

Hence, we will be done once we take care of the limit $\grae \wenta h$ for $h$ with the following property: there is an integer $n=n(g)$ such that $h(x)=0$ whenever $x\ge n$.

For such $h$ and $x\ge n$, the second term in $\wenta h (x)$ vanishes whereas the first reduces to $\frac {2\ell_\ksy (x)}{w_\ksy} \int_0^n k_\ksy (y) \e^{\mathfrak a_\ksy (y)} h(y)\ud y$. Also, by~(b), the integral above converges, as $\eps \to 0$, to $\int_0^n \frac 1{\slam} \sinh \slam y \,  h(y)\ud y$. Thus, by (c), uniformly in $x\ge n$, 
\[ \grae \wenta h (x)= \sqrt{\tfrac {2}{\lam}}\e^{-\slam x}  \int_0^n \sinh \slam y \, h(y)\ud y .\] Here, the right-hand side is nothing else than  \[ \sqrt{\tfrac {2}{\lam}}\e^{-\slam x}  \int_0^x \sinh \slam y \, h(y)\mud y = \tfrac 1{\slam} \int_0^x (\e^{-\slam |x-y|} - \e^{-\slam (x+y)}) h(y)\mud y \]
and the upper limit in the last integral can be replaced by $\infty$, because $h(y)=0$ for $y \ge x \ge n$. 

Similarly for $x\in [0,n]$: in this case, the  first term of $\wenta h(x)$ is a product of two functions, of which the second, $x\mapsto \int_0^x k_\ksy (y) \e^{\mathfrak a_\ksy (y)} h(y)\ud y$, by~(b),  converges, as $\eps \to 0$, to $\frac 1 \slam \int_0^x \sinh \slam y \, h(y)\mud y$ uniformly in $x\in [0,n]$. Thus, by (c), the first term converges to $\sqrt{\tfrac {2}{\lam}}\e^{-\slam x}  \int_0^x \sinh \slam y \, h(y)\mud y $ uniformly in $x\in [0,n]$. Since an analogous analysis establishes the convergence of the second term, we conclude that 
\begin{align*} \grae \wenta h(x)& = \sqrt{\tfrac {2}{\lam}}\e^{-\slam x}  \int_0^x \sinh \slam y \, h(y)\mud y \\ &\phantom{==========}+ \sqrt{\tfrac {2}{\lam}}\sinh \slam x  \int_0^x \e^{- \slam y } h(y)\mud y \\ & = \tfrac 1{\slam} \int_0^\infty (\e^{-\slam |x-y|} - \e^{-\slam (x+y)}) h(y)\ud y, \end{align*}
uniformly in $x\in [0,n]$. This completes our argument. 
\end{proof}
 We are now ready to give an independent, semigroup-theoretic, proof of the following restatement of Theorem \ref{thm:glo3}. 
\begin{thm}\label{thm:glo4} For $g\in C(\kiki)$ and $\lam>0$, let  $\rlae g $ denote the right-hand side of 
\eqref{gox:2}. Then, $\rlae $s converge strongly, as $\eps \to 0$, to the operator $\rla $ defined by
\begin{equation}\label{cor:1} \rla g = \rla^0 g + \frac {\sum_{i\in \kad} p_i\e^{2\alpha_i}  C_{i} (g)}{\lam \sum_{i\in \kad} p_i\e^{2\alpha_i} C_{i} (\mathsf 1)}\mc L_{\lam}, \qquad g \in C(\kiki), \end{equation}
where \begin{itemize} 
\item [(a)] $(\rla^0g)_i (x) \coloneqq \frac 1{\slam} \int_0^\infty (\e^{-\slam |x-y|} - \e^{-\slam (x+y)}) g_i(y)\ud y$, $x \ge 0$, 
\item [(b)] $C_i(g) \coloneqq 2 \int_0^\infty \e^{-\slam y}g_i(y)\ud y$, 
\item [(c)] $\mc L_\lam \coloneqq \mathsf 1 - \rla^0 \mathsf 1$, and, to recall, $\mathsf 1\coloneqq \seq {\mathsf 1_{[0,\infty)}}$.
 \end{itemize}
\begin{proof} Lemma \ref{lem:resolvents} (d) states that the operators $\rlae^0$ of \eqref{gox:1a} converge strongly to $\rla^0$ defined above, and so, in particular, $\grae \mc L_{\lam,\eps} = \mc L_\lam$.  It remains, therefore, to show that $\grae C_\ksy (g) = \e^{2\alpha_i} C_i(g), i \in \kad, g \in C(\kiki)$. This, however, is clear from the definition of $C_\ksy(g)$s and  Lemma \ref{lem:resolvents} (c).      \end{proof}
 \end{thm}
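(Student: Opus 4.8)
The plan is to pass to the limit in the explicit formula \eqref{gox:2} one ingredient at a time, with Lemma \ref{lem:resolvents} doing all the analytic work. Since $\kad$ is finite, strong convergence in $C(\kiki)\subset\ceika$ is the same as coordinatewise uniform convergence, so I would first dispose of the minimal part. By \eqref{gox:1a}, the $i$th coordinate of $\rlae^0 g$ is precisely the function $f_\ksy$ of \eqref{gox:1} with $h=g_i$, and Lemma \ref{lem:resolvents}(d) states that this converges, uniformly on $[0,\infty)$, to $\frac1{\slam}\int_0^\infty(\e^{-\slam|x-y|}-\e^{-\slam(x+y)})g_i(y)\ud y=(\rla^0 g)_i(x)$. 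Hence $\grae\rlae^0 g=\rla^0 g$ strongly, and specializing to $g=\mathsf 1$ gives $\grae\mc L_{\lam,\eps}=\mathsf 1-\lam\rla^0\mathsf 1=\mc L_\lam$.

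What remains is the scalar factor in \eqref{gox:2}, that is, the convergence of the linear functionals $C_\ksy$. Writing $C_\ksy(g)=2\int_0^\infty\frac{\ell_\ksy(y)}{w_\ksy}\e^{\mathfrak a_\ksy(y)}g_i(y)\ud y$, I would note that the integrand converges pointwise on $(0,\infty)$: Lemma \ref{lem:resolvents}(c) gives $\frac{\ell_\ksy(y)}{w_\ksy}\to\e^{-\slam y}$, while \eqref{coa:1} with $y=0$ forces $\mathfrak a_\ksy(y)=2\int_0^y a_\ksy(z)\ud z\to2\alpha_i$, hence $\e^{\mathfrak a_\ksy(y)}\to\e^{2\alpha_i}$. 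If the limit may be taken under the integral sign, this yields $\grae C_\ksy(g)=2\e^{2\alpha_i}\int_0^\infty\e^{-\slam y}g_i(y)\ud y=\e^{2\alpha_i}C_i(g)$, and in particular $\grae C_\ksy(\mathsf 1)=\e^{2\alpha_i}C_i(\mathsf 1)$.

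Granting these limits, the proof closes by bookkeeping. The denominator $\lam\sui p_i C_\ksy(\mathsf 1)$ converges to $\lam\sui p_i\e^{2\alpha_i}C_i(\mathsf 1)$, which is nonzero because every $p_i$, $\e^{2\alpha_i}$ and $C_i(\mathsf 1)=2/\slam$ is strictly positive; so the coefficient $\frac{\sui p_iC_\ksy(g)}{\lam\sui p_iC_\ksy(\mathsf 1)}$ converges to $\frac{\sui p_i\e^{2\alpha_i}C_i(g)}{\lam\sui p_i\e^{2\alpha_i}C_i(\mathsf 1)}$. Combining this with $\grae\rlae^0 g=\rla^0 g$ and $\grae\mc L_{\lam,\eps}=\mc L_\lam$ shows that the right-hand side of \eqref{gox:2} converges in the norm of $\mc C$ to the right-hand side of \eqref{cor:1}, which is exactly $\grae\rlae g=\rla g$.

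The one step that genuinely needs care, and which I expect to be the main obstacle, is the interchange of limit and integral in the computation of $\grae C_\ksy$: because $g_i$ is merely bounded and continuous on the \emph{unbounded} half-line, the uniform convergence furnished by Lemma \ref{lem:resolvents}(c) does not by itself dominate the integrand, and a crude exponential majorant for $\frac{\ell_\ksy}{w_\ksy}$ degrades as $M$ grows. The clean way I would close this gap is via Scheffé's lemma applied to the nonnegative densities $\frac{\ell_\ksy}{w_\ksy}\e^{\mathfrak a_\ksy}$: they converge pointwise to $\e^{2\alpha_i}\e^{-\slam y}$, so it suffices to check that their total masses converge, and these can be evaluated in closed form. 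Indeed, integrating the identity $(\e^{\mathfrak a_\ksy}\ell_\ksy')'=2\lam\e^{\mathfrak a_\ksy}\ell_\ksy$ over $[0,\infty)$ and using $\e^{\mathfrak a_\ksy}\ell_\ksy'\to0$ (established in the proof of Lemma \ref{lem:resolvents}(d)) gives $\tfrac12 C_\ksy(\mathsf 1)=\int_0^\infty\frac{\ell_\ksy}{w_\ksy}\e^{\mathfrak a_\ksy}=\frac1{2\lam c_\ksy}$, which tends to $\frac1{2\lam c_i}=\frac{\e^{2\alpha_i}}{\slam}=\int_0^\infty\e^{2\alpha_i}\e^{-\slam y}\ud y$ by the evaluation $c_i=\e^{-2\alpha_i}/\slam$ recorded in the proof of Lemma \ref{lem:resolvents}(c). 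Scheffé then upgrades the pointwise convergence to $L^1$-convergence of the densities, and since $g_i$ is bounded, $\grae C_\ksy(g)=\e^{2\alpha_i}C_i(g)$ follows, completing the argument.
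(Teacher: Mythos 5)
Your argument follows the same route as the paper's own (very short) proof: pass to the limit termwise in \eqref{gox:2}, using Lemma \ref{lem:resolvents}(d) for $\rlae^0$ and hence for $\mc L_{\lam,\eps}$, and Lemma \ref{lem:resolvents}(c) for the functionals $C_\ksy$. The difference is that where the paper declares $\grae C_\ksy(g)=\e^{2\alpha_i}C_i(g)$ to be ``clear'' from the definition of $C_\ksy$ and part (c), you correctly flag that this interchange of limit and integral over the unbounded half-line is the only step needing real justification: uniform convergence of $\frac{\ell_\ksy}{w_\ksy}$ to $\e^{-\slam\,\cdot}$ does not by itself produce an $\eps$-uniform integrable majorant for $\frac{\ell_\ksy}{w_\ksy}\e^{\mathfrak a_\ksy}|g_i|$. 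Your Scheff\'e argument closes this gap cleanly and checks out: integrating $(\e^{\mathfrak a_\ksy}\ell_\ksy')'=2\lam\e^{\mathfrak a_\ksy}\ell_\ksy$ and using $\lim_{x\to\infty}\e^{\mathfrak a_\ksy(x)}\ell_\ksy'(x)=0$ together with $\ell_\ksy'(0)/w_\ksy=-1/c_\ksy$ gives $\int_0^\infty\frac{\ell_\ksy}{w_\ksy}\e^{\mathfrak a_\ksy}=\frac1{2\lam c_\ksy}\to\frac{\e^{2\alpha_i}}{\slam}=\int_0^\infty\e^{2\alpha_i}\e^{-\slam y}\ud y$, which is exactly the convergence of total masses needed, and the pointwise convergence of the densities on $(0,\infty)$ follows from part (c) and \eqref{coa:1}. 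So your proof is correct and, if anything, more complete than the printed one; the only cosmetic remark is that the uniform statement in Lemma \ref{lem:resolvents}(c) could alternatively be combined with the monotonicity of $\ell_\ksy$ to get a tail estimate, but Scheff\'e is the tidier device here.
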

 
Finally, we remark that, as expected, the operators $\rla, \lam >0$ defined in \eqref{cor:1} form the resolvent of the Walsh process with parameters $\widetilde p_i, i \in \kad $ of \eqref{pta:1}. This can be argued directly as in Section \ref{rem:andrzeja}, or by consulting e.g. \cite{abap} Sections 3.2 and 4.1. 

\vspace{0.25cm}
\bf Acknowledgment. \rm The authors would like to thank the Isaac Newton Institute for Mathematical Sciences, Cambridge, for support and hospitality during the programme \emph{Stochastic systems for anomalous diffusion}, where work on this paper was undertaken. This work was supported by EPSRC grant EP/Z000580/1.   

Furthermore, we express our gratitude to El\.zbieta Ratajczyk for beautifying our paper with Figure \ref{sl}, and to M. Portenko for sharing his enormous experience in this field with us.

Finally, A. Pilipenko  thanks  the Swiss National Science
Foundation for partial support  of the paper (grants IZRIZ0\_226875, 200020\_214819, 200020\_200400, and 200020\_192129).

\vspace{-1cm}
\bibliographystyle{plain}
\bibliography{bibliografia1}

@Preamble{"\def\cprime{$'$}"}

@Preamble{"\def\polhk#1{\setbox0=\hbox{#1}{\ooalign{\hidewidth
                 \lower1.5ex\hbox{`}\hidewidth\crcr\unhbox0}}}"}

@Preamble{"\ifx \cedla \undefined \let \cedla = \c \fi" # "\ifx \cyr
                 \undefined \let \cyr = \relax \fi" # "\ifx \cprime
                 \undefined \def \cprime {$\mathsurround=0pt '$}\fi" #
                 "\ifx \prime \undefined \def \prime {'} \fi"}

@String{AM =     "Ann. Math."}

@String{BPAN =   "Bull. Polish Acad. Sci."}

@String{CUP =    "Cambridge University Press"}

@String{TAMS =   "Trans. Amer. Math. Soc."}

@article{abernachr,
author = {Bobrowski, A. and Ratajczyk, E.},
title = {Approximation of skew {B}rownian motion by snapping-out {B}rownian motions},
journal = {Mathematische Nachrichten},
volume = {298},
number = {3},
pages = {829-848},
keywords = {complemented spaces, invariant subspaces, projection, skew and snapping-out Brownian motion, transmission conditions},
doi = {https://doi.org/10.1002/mana.202400179},
url = {https://onlinelibrary.wiley.com/doi/abs/10.1002/mana.202400179},
eprint = {https://onlinelibrary.wiley.com/doi/pdf/10.1002/mana.202400179},
abstract = {Abstract We elaborate on the theorem saying that as permeability coefficients of snapping-out Brownian motions tend to infinity in such a way that their ratio remains constant, these processes converge to a skew Brownian motion. In particular, convergence of the related semigroups, cosine families, and projections isÂ discussed.},
year = {2025}
}

@misc{fromsotows,
      title={From snapping out {B}rownian motions to {W}alsh's spider processes on star-like graphs}, 
      author={Bobrowski, A. and Ratajczyk, E.},
      year={2025},
      eprint={2406.16800},
      archivePrefix={arXiv},
      primaryClass={math.PR},
      url={https://arxiv.org/abs/2406.16800}, 
      note={To appear in {D}ocumenta {M}athematica, arXiv 2406.16800}, 
}

@InProceedings{portenko,
author="Portenko, N. I.",
editor="Maruyama, Gisiro
and Prokhorov, Jurii V.",
title="Generalized diffusion processes",
booktitle="Proceedings of the {T}hird {J}apan---{USSR} {S}ymposium on {P}robability {T}heory",
year="1976",
publisher="Springer Berlin Heidelberg",
address="Berlin, Heidelberg",
pages="500--523",
isbn="978-3-540-37966-9"
}

@book{portenkokniga,
author="Portenko, N. I.",
title="Generalized {D}iffusion {P}rocesses",
publisher="American Mathematical Society",
series="Translations of Mathematical Monographs",
volume="83",
year="1990"
}

@article{manpil,
title = {On a {B}rownian motion with a hard membrane},
journal = {Statistics $\&$ Probability Letters},
volume = {113},
pages = {62-70},
year = {2016},
issn = {0167-7152},
doi = {https://doi.org/10.1016/j.spl.2016.02.005},
url = {https://www.sciencedirect.com/science/article/pii/S0167715215301279},
author = {Mandrekar, V. and Pilipenko, A.},
keywords = {Reflecting Brownian motion, Local time, Invariance principle},
abstract = {Local perturbations of a Brownian motion are considered. As a limit we obtain a non-Markov process that behaves as a reflecting Brownian motion on the positive half line until its local time at zero reaches some exponential level, then changes a sign and behaves as a reflecting Brownian motion on the negative half line until some stopping time, etc.}
}

@article{andreynew,
author={Aryasova, O. and Pavlyukevich, I. and Pilipenko, A.},
title ={Homogenization of a Multivariate Diffusion with Semipermeable Interfaces.},
journal={J Theor Probab},
number ={37}, 
pages ={1787-1823},
year ={2024},
url ={https://doi.org/10.1007/s10959-024-01317-5},
}

@InProceedings{legall,
author="Le Gall, J. F.",
editor="Truman, Aubrey
and Williams, David",
title="One-dimensional stochastic differential equations involving the local times of the unknown process",
booktitle="Stochastic {A}nalysis and {A}pplications",
year="1984",
publisher="Springer Berlin Heidelberg",
address="Berlin, Heidelberg",
pages="51--82",
isbn="978-3-540-39103-6"
}

@article{makno,
title={Diffusion processes in a composite environment},
author= {Makhno, S. Ya.},
journal={Theor. Probability and Math. Statist.},
volume ={94},
year ={2017}, 
pages ={137-149},
}

@inproceedings{Walsh,
  title={A diffusion with a discontinuous local time},
  author={J. B. Walsh},
  pages={37-45},
   BOOKTITLE={Temps locaus, {A}st\'erisque, {S}oci\'ete {M}ath\'ematique de {F}rance},
   publisher={Ast\'erisque, Soci\'ete Math\'ematique de France}, 
  year={1978}
}

@article{abtk,
 AUTHOR = {Bobrowski, A. and Komorowski, T.},
 TITLE = {Diffusion approximation for a simple kinetic model with asymmetric interface},
journal= {J. Evol. Equ.},
year={2022},
doi={https://doi.org/10.1007/s00028-022-00801-x},
volume="22:42",
pages={},
}

@preamble{
   "\def\polhk#1{\setbox0=\hbox{#1}{\ooalign{\hidewidth
    \lower1.5ex\hbox{`}\hidewidth\crcr\unhbox0}}} "
}

@book {bass,
    AUTHOR = {Bass, R. F.},
     TITLE = {Stochastic processes},
    SERIES = {Cambridge Series in Statistical and Probabilistic Mathematics},
    VOLUME = {33},
 PUBLISHER = {Cambridge University Press, Cambridge},
      YEAR = {2011},
     PAGES = {xvi+390},
      ISBN = {978-1-107-00800-7},
       DOI = {10.1017/CBO9780511997044},
       URL = {https://doi.org/10.1017/CBO9780511997044},
}

@book {bookandrey,
    AUTHOR = {Iksanov, A. and Marynych, A. and Pilipenko, A. and Samoilenko, I.},
     TITLE = {Locally {P}erturbed {R}andom {W}alks},
     PUBLISHER = {Springer},
      YEAR = {2025},
}

@misc{kinetic,
      title={A kinetic model approximation of {W}alsh's spider process on the infinite star-like graph}, 
      author={Bobrowski, A. and Ratajczyk, E.},
      year={2024},
      eprint={2409.15467},
      archivePrefix={arXiv},
      primaryClass={math.PR},
      url={https://arxiv.org/abs/2409.15467}, 
      note={arXiv 2409.153467},
      }

@book{stroockvaradhan_book,
  title={Multidimensional {D}iffusion {P}rocesses},
  author={Stroock, D. W. and Varadhan, S. R. S.},
  year={2007},
  publisher={Springer}
}

@Book{debmik,
  author =       "L. Debnath and P. Mikusi\'nski",
  title =        "Introduction to {H}ilbert {S}paces with {A}pplications",
  publisher =    "Elsevier",
  address =      "Amsterdam",
  edition =      "3rd",
  year =         "2005",
}

@Article{abielecki,
  author =       "Bielecki, A.",
  title =        "Une remarque sur la m\'ethode de
                 {B}anach--{C}acciopo\-li--{T}ikhonov",
  journal =      BPAN,
  volume =       "4",
  pages =        "261--268",
  year =         "1956",
}

@article {zmarkusem2b,
    AUTHOR = {Bobrowski, A. and Kunze, M.},
     TITLE = {Irregular convergence of mild solutions of semilinear
              equations},
   JOURNAL = {J. Math. Anal. Appl.},
  FJOURNAL = {Journal of Mathematical Analysis and Applications},
    VOLUME = {472},
      YEAR = {2019},
    NUMBER = {2},
     PAGES = {1401--1419},
      ISSN = {0022-247X},
       DOI = {10.1016/j.jmaa.2018.11.082},
       URL = {https://doi.org/10.1016/j.jmaa.2018.11.082},
}

@article {aberman,
    AUTHOR = {Bobrowski, A. and Ratajczyk, E.},
     TITLE = {Pairs of complementary transmission conditions for {B}rownian
              motion},
   JOURNAL = {Math. Ann.},
  FJOURNAL = {Mathematische Annalen},
    VOLUME = {388},
      YEAR = {2024},
    NUMBER = {4},
     PAGES = {4317--4342},
      ISSN = {0025-5831,1432-1807},
        DOI = {10.1007/s00208-023-02613-x},
       URL = {https://doi.org/10.1007/s00208-023-02613-x},
}

@book {dynkinmp,
    AUTHOR = {Dynkin, E. B.},
     TITLE = {Markov {P}rocesses. {V}ols. {I}, {II}},
    SERIES = {Translated with the authorization and assistance of the author
              by J. Fabius, V. Greenberg, A. Maitra, G. Majone. Die
              Grundlehren der Mathematischen Wissenschaften, B\"ande 121},
    VOLUME = {122},
 PUBLISHER = {Academic Press Inc., Publishers, New York; Springer-Verlag,
              Berlin-G\"ottingen-Heidelberg},
      YEAR = {1965},
     PAGES = {Vol. I: xii+365 pp.; Vol. II: viii+274},
}

@book {knigazcup,
    author = "Bobrowski, A.",
TITLE = {Convergence of {O}ne-{P}arameter {O}perator {S}emigroups. {I}n {M}odels of  
{M}athematical {B}iology and {E}lsewhere},
 PUBLISHER = {Cambridge University Press, Cambridge},
      YEAR = {2016},
}

@book {jedenipol,
    author = "Bobrowski, A.",
TITLE = {Functional {A}nalysis {R}evisited. {A}n {E}ssay on {C}ompleteness},
 PUBLISHER = {Cambridge University Press, Cambridge},
      YEAR = {2024},
}

@Book{edwards,
  author =       "R. E. Edwards",
  title =        "Functional {A}nalysis. {T}heory and {A}pplications",
  publisher =    "Dover Publications",
  year =         "1995",
}

@Book{engel,
  author =       "K.-J. Engel and R. Nagel",
  title =        "One-{P}arameter {S}emigroups for {L}inear {E}volution
                 {E}quations",
  publisher =    "Springer",
  address =      "New York",
  year =         "2000",
}

@Book{ethier,
  author =       "S. N. Ethier and T. G. Kurtz",
  title =        "{M}arkov {P}rocesses. {C}haracterization and
                 {C}onvergence",
  publisher =    "Wiley",
  address =      "New York",
  year =         "1986",
}

@Article{fellera3,
  author =       "W. Feller",
  title =        "The parabolic differential equations and the
                 associated semi-groups of transformations",
  journal =      AM,
  volume =       "55",
  pages =        "468--519",
  year =         "1952",
}

@Article{fellera4,
  author =       "W. Feller",
  title =        "Diffusion processes in one dimension",
  journal =      TAMS,
  volume =       "77",
  number =       "1",
  pages =        "1--31",
  year =         "1954",
}

@Book{goldstein,
  author =       "J. A. Goldstein",
  title =        "Semigroups of {L}inear {O}perators and
                 {A}pplications",
  publisher =    "Oxford University Press",
  address =      "New York",
  year =         "1985",
}

@Book{ito,
  author =       "K. It{\^o} and {McKean, Jr.} H. P.",
  title =        "Diffusion {P}rocesses and {T}heir {S}ample {P}aths",
  publisher =    "Springer",
  address =      "Berlin",
  year =         "1996",
  note =         "Repr. of the 1974 ed.",
}

@article{kostrykin2012,
   title={Brownian motions on metric graphs},
   volume={53},
   ISSN={1089-7658},
   url={http://dx.doi.org/10.1063/1.4714661},
   DOI={10.1063/1.4714661},
   number={9},
   journal={Journal of Mathematical Physics},
   publisher={AIP Publishing},
   author={Kostrykin, V. and Potthoff, J. and Schrader, R.},
   year={2012},
   month={Sep},
   pages={095206}
}

@article {kostrykin2,
    AUTHOR = {Kostrykin, V. and Potthoff, J. and Schrader,
              R.},
     TITLE = {Construction of the paths of {B}rownian motions on star graphs
              {II}},
   JOURNAL = {Commun. Stoch. Anal.},
  FJOURNAL = {Communications on Stochastic Analysis},
    VOLUME = {6},
      YEAR = {2012},
    NUMBER = {2},
     PAGES = {247--261},
      ISSN = {2688-6669},
}

@incollection {barlow,
    AUTHOR = {Barlow, M. and Pitman, J. and Yor, M.},
     TITLE = {On {W}alsh's {B}rownian motions},
 BOOKTITLE = {S\'{e}minaire de {P}robabilit\'{e}s, {XXIII}},
    SERIES = {Lecture Notes in Math.},
    VOLUME = {1372},
     PAGES = {275--293},
 PUBLISHER = {Springer, Berlin},
      YEAR = {1989},
      ISBN = {3-540-51191-1},
       DOI = {10.1007/BFb0083979},
       URL = {https://doi.org/10.1007/BFb0083979},
}

@Book{kallenbergnew,
  author =       "O. Kallenberg",
  title =        "Foundations of {M}odern {P}robability",
  publisher =    "Springer",
  year =         "2002",
  edition = "2nd",
}

@book {kniga,
    AUTHOR = {Bobrowski, A.},
     TITLE = {Functional {A}nalysis for {P}robability and {S}tochastic {P}rocesses. {A}n {I}ntroduction},
 PUBLISHER = {Cambridge University Press, Cambridge},
      YEAR = {2005},
     PAGES = {xii+393},
      ISBN = {978-0-521-83166-6; 978-0-521-53937-1; 0-521-53937-4},
        DOI = {10.1017/CBO9780511614583},
       URL = {http://dx.doi.org/10.1017/CBO9780511614583},
}

@article {lejayskew,
    AUTHOR = {Lejay, A.},
     TITLE = {On the constructions of the skew {B}rownian motion},
   JOURNAL = {Probab. Surv.},
  FJOURNAL = {Probability Surveys},
    VOLUME = {3},
      YEAR = {2006},
     PAGES = {413--466},
       DOI = {10.1214/154957807000000013},
       URL = {https://doi.org/10.1214/154957807000000013},
}

@book {yor97,
    AUTHOR = {Yor, M.},
     TITLE = {Some {A}spects of {B}rownian {M}otion. {P}art {II}: {S}ome {R}ecent {M}artingale {P}roblems},
    SERIES = {Lectures in Mathematics ETH Z\"{u}rich},
 PUBLISHER = {Birkh\"{a}user Verlag, Basel},
      YEAR = {1997},
     PAGES = {xii+144},
      ISBN = {3-7643-5717-7},
       DOI = {10.1007/978-3-0348-8954-4},
       URL = {https://doi.org/10.1007/978-3-0348-8954-4},
}

@book {manyor,
    AUTHOR = {Mansuy, R. and Yor, M.},
     TITLE = {Aspects of {B}rownian {M}otion},
    SERIES = {Universitext},
 PUBLISHER = {Springer-Verlag, Berlin},
      YEAR = {2008},
     PAGES = {xiv+195},
      ISBN = {978-3-540-22347-4},
       DOI = {10.1007/978-3-540-49966-4},
       URL = {https://doi.org/10.1007/978-3-540-49966-4},
}

@Book{lions3,
  author =       "R. Dautray and J.-L. Lions",
  title =        "Mathematical {A}nalysis and {N}umerical {M}ethods for
                 {S}cience and {T}echnology. {V}ol. 3",
  publisher =    "Springer-Verlag",
  address =      "Berlin",
  pages =        "x+515",
  year =         "1990",
  ISBN =         "3-540-50208-4; 3-540-66099-2",
  MRclass =      "00A05 (35-01 47-01 78-01 81-01)",
  note =         "Spectral theory and applications, With the
                 collaboration of Michel Artola and Michel Cessenat,
                 Translated from the French by John C. Amson",
}

@Book{mandl,
  author =       "P. Mandl",
  title =        "Analytical {T}reatment of {O}ne-{D}imensional {M}arkov
                 {P}rocesses",
  publisher =    "Springer",
  year =         "1968",
}

@Article{newfromold,
author = {A. Bobrowski},
title = {New semigroups from old: {A}n approach to {F}eller boundary conditions},
journal = {Discrete and Continuous Dynamical Systems--S},
volume = {17},
number = {5\&6},
pages = {2108-2140},
year = {2024},
issn = {1937-1632},
doi = {10.3934/dcdss.2023084},
url = {https://www.aimsciences.org/article/id/643e705642c9ce375f3848bd},
keywords = {Invariant subspaces, cosine families, Feller–Wentzel boundary and transmission conditions, local times, skew Brownian motion, snapping-out Brownian motion, Feller semigroups and resolvents}
}

@Book{pazy,
  author =       "A. Pazy",
  title =        "Semigroups of {L}inear {O}perators and {A}pplications to
                 {P}artial {D}ifferential {E}quations",
  publisher =    "Springer",
  year =         "1983",
}

@book{robinson2020, place={Cambridge}, title={An {I}ntroduction to {F}unctional {A}nalysis}, publisher={Cambridge University Press}, author={Robinson, J. C.}, year={2020}}

@article{jachymski, 
author ={Jachymski, J. and J\'o\'zwik, I.  and Terepeta, M.},
title ={ The {B}anach {F}ixed {P}oint {T}heorem: selected topics from its hundred-year history},
journal ={ Rev. Real Acad. Cienc. Exactas Fis. Nat. Ser. A-Mat.},
volume ={118;140}, 
year ={2024}, 
url ={https://doi.org/10.1007/s13398-024-01636-6},
}

@Book{rogers,
  author =       "L. C. G. Rogers and D. Williams",
  title =        "Diffusions, {M}arkov {P}rocesses and {M}artingales, {V}ol.
                 1, {F}oundations",
  publisher =    CUP,
  address =      "Cambridge",
  year =         "2000",
}

@Book{rogers2,
  author =       "L. C. G. Rogers and D. Williams",
  title =        "Diffusions, {M}arkov {P}rocesses and {M}artingales, {V}ol.
                 2, {I}t\^o Calculus",
  publisher =    CUP,
  address =      "Cambridge",
  year =         "2000",
}

@incollection {tv,
    AUTHOR = {Thieme, H. R. and Voigt, J.},
     TITLE = {Stochastic semigroups: their construction by perturbation and
              approximation},
 BOOKTITLE = {Positivity {IV}---theory and applications},
     PAGES = {135--146},
 PUBLISHER = {Tech. Univ. Dresden, Dresden},
      YEAR = {2006},
}

@book {gandssde,
    AUTHOR = {Gikhman, I.I. and Skorokhod, A.V.},
     TITLE = {Stochastic Differential Equations},
    SERIES = {Ergeb. Math. Grenzgeb. [Results in Mathematics and Related Areas]},
      VOLUME = {72},
 PUBLISHER = {Springer-Verlag, New York--Heidelberg},
      YEAR = {1972},
}

@misc{abap,
      title={Analytic and stochastic description of {B}rownian motions on star graphs}, 
      author={Bobrowski, A. and Pilipenko, A.},
      year={2025},
      eprint={2511.20039},
      archivePrefix={arXiv},
      primaryClass={math.PR},
      url={https://arxiv.org/abs/2511.20039}, 
      note={arXiv 2511.20039},
}

@article {adasma,
AUTHOR ={Gregosiewicz, A.},
TITLE = {Resolvent decomposition with applications to semigroups and cosine functions},
journal = {Mathematische Annalen},
volume ={391},
year ={2025},
pages ={4011-4035},
url = { https://doi.org/10.1007/s00208-024-03016-2},
}
\end{document}